\numberwithin{equation}{chapter}
\theoremstyle{plain}
\newtheorem{theorem}{Theorem}[chapter]
\newtheorem{proposition}[theorem]{Proposition}
\newtheorem{corollary}[theorem]{Corollary}
\theoremstyle{definition}
\theoremstyle{remark}
\newtheorem{remark}[theorem]{Remark}
\DeclareMathAlphabet{\mathpzc}{OT1}{pzc}{m}{it}
 \newcommand{\sC}{\mathscr{C}}
 \newcommand{\sH}{\mathscr{H}}
 \newcommand{\sL}{\mathscr{L}}
\newcommand{\Weyl}{{\mathsf{Weyl}}}
\newcommand{\E}{{\mathsf{E}}}
\newcommand{\T}{{\mathsf{T}}}
\newcommand{\const}{{\mathsf{const}}}
\newcommand{\dist}{{{\mathsf{dist}}}}
\newcommand{\loc}{{{\mathsf{loc}}}}
\newcommand{\new}{{{\mathsf{new}}}}
\newcommand{\bR}{{\mathbb{R}}}
\def\1{\boldsymbol {|}}
\newcommand{\boldsigma}{{\boldsymbol{\sigma}}}
\newcommand{\Def}{\mathrel{\mathop:}=}
\renewcommand{\Im}{\operatorname{Im}}       
\newcommand{\supp}{\operatorname{supp}}
\newcommand{\tr}{\operatorname{tr}}
\newcommand{\Tr}{\operatorname{Tr}}
\newcommand{\Res}{\operatorname{Res}}
\newenvironment{claim}[1][{\textup{(\theequation)}}]{\refstepcounter{equation}\vglue10pt
\begin{trivlist}
\item[{\hskip\labelsep#1}]}{\vglue10pt\end{trivlist}}
\newenvironment{claim*}[1][{}]{\vglue10pt
\begin{trivlist}
\item[{\hskip\labelsep#1}]}{\vglue10pt\end{trivlist}}
\newcounter{note}
\DeclareTextCommand{\textinfty}{PU}{\9042\036}
\DeclareTextCommand{\textge}{PU}{\9042\145}
\DeclareTextCommand{\textle}{PU}{\9042\144}
\DeclareTextCommand{\texthat}{PD1}{\136}
\begin{document}


\title{Local trace asymptotics  in the self-generated magnetic field}
\author{Victor Ivrii\thanks{Department of Mathematics, University of Toronto, 40 St. George Street, Toronto, ON, M5S 2E4, Canada, ivrii@math.toronto.edu}}
\maketitle

\chapter{Global theory}
\label{sect-1}

\section{Statement of the problem}%
\label{sect-1-1}
Let us consider the following operator (quantum Hamiltonian) in 
$\bR^d$ with $d=3$
\begin{equation}
H=H _{A,V}=\bigl((hD -A)\cdot \boldsigma \bigr) ^2-V(x)
\label{1-1}
\end{equation}
where $A,V$ are real-valued functions and $V\in \sL^{\frac{5}{2}}\cap \sL^{4}$, 
$A\in \sH^1$. Than this operator is self-adjoint. We are interested in $\Tr^- H_{A,V}$ (the sum of all negative eigenvalues of this operator). Let
\begin{gather}
\E ^*=\inf_{A\in \sH^1_0(B(0,1))}\E(A),\label{1-2}\\
\E(A)\Def \Bigl( \Tr^-H_{A,V}  +  
\kappa^{-1} h^{-2}\int  |\partial A|^2\,dx\Bigr)
\label{1-3}
\end{gather}
with $\partial A=(\partial_i A_j)$ a matrix. Semiclassical asymptotics for such objects were studied first in \cite{EFS2} as a first step to consider asymptotics of the ground state energy for atoms  and molecules   in the self-generated magnetic fields \cite{ES,EFS1} which was achieved in \cite{EFS3} where Scott correction term was recovered. 

Similarly, this paper is the first step to the recovering sharper asymptotics of the ground state energy for atoms  and molecules   in the self-generated magnetic fields, hopefully up to and including Dirac and Schwinger corrections. To do this we improve theorem \ref{EFS2-thm:secondorder} of \cite{EFS2} (see theorems~\ref{thm-2-5}, \ref{thm-2-7}, \ref{thm-4-1} and \ref{thm-4-2} below).

The estimate from above is delivered by $A=0$ and Weyl formula with an error $O(h^{-1})$ as $V\in \sC^{2,1}$\,\footnote{\label{foot-1} Which means that the second derivatives of $V$ are continuous with the continuity modulus $O(|\log |x-y||^{-1})$, see section~\ref{book_new-sect-4-5} of \cite{futurebook}). If there is a boundary it does not pose any problem as it is in the classically forbidden region.}
\begin{equation}
\E^*\le \Weyl_1 + O(h^{-1});
\label{1-4}
\end{equation}
where 
\begin{gather}
\Weyl(\tau) = \frac{1}{3\pi^2} h^{-3}\int (V+\tau)_+^{\frac{3}{2}}\,dx,\label{1-5}\\
\Weyl_1 = \int _{-\infty}^0 \tau\,d_\tau \Weyl(\tau)=
\frac{2}{15\pi^2} \int V_+^{\frac{5}{2}}\,dx.\label{1-6}
\end{gather}
Also for estimates $o(h^{-2})$ we need to include into $\Weyl_1$ the corresponding boundary term. The purpose of this paper is to provide an estimate from below
\begin{equation}
\E^*\ge \Weyl_1 - O(h^{-1});
\label{1-7}
\end{equation}
We will use also $\Weyl (x,\tau)$ and $\Weyl _1(x)$ defined the same way albeit without integration with respect to $x$.

In this version (v3) we fix several significant errors, include asymptotics with $o(h^{-1})$ remainder and provide a bit more details.

\section{Preliminary}
\label{sect-1-2}
Let us estimate from below. First we need the following really simple theorem (cf.  \ref{EFS1-thm:stab} \cite{EFS1})

\begin{theorem}\label{thm-1-1}
Let  $V\in \sL^{\frac{5}{2}}\cap \sL^4$. Then
\begin{gather} 
\E^*\ge -C h^{-3}\label{1-8}\\
\shortintertext{and either}
\frac {1}{\kappa h^2} \int |\partial A|^2\,dx \le Ch^{-3}\label{1-9}.
\end{gather}
or $\E (A) \ge ch^{-3}$.
\end{theorem}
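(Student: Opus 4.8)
The plan is to use the variational characterization directly and play two terms of $\E(A)$ against each other. Fix a minimizer (or near-minimizer) $A$ in $\sH^1_0(B(0,1))$, so that $\E(A)\le \E^*+1\le \Weyl_1+O(h^{-1})+1$; by the trivial upper bound $\E^*\le Ch^{-3}$ (take $A=0$ and note $\Tr^-H_{0,V}\ge -Ch^{-3}$ from the Lieb--Thirring inequality, using $V\in\sL^{\frac52}$), the left-hand side is $\le Ch^{-3}$. The key is then a lower bound for $\Tr^-H_{A,V}$ that does not yet control $A$: by the magnetic Lieb--Thirring / CLR-type estimate (or the pointwise Weyl-type bound, cf. the results quoted from \cite{EFS2}), one has $\Tr^-H_{A,V}\ge -C h^{-3}\int (V_++C)^{\frac52}\,dx - C\eps\, h^{-2}\int|\partial A|^2\,dx$ for any $\eps$, or more crudely simply $\Tr^-H_{A,V}\ge -Ch^{-3}$ with a constant independent of $A$ — this last, weak form is what the stability statement \ref{EFS1-thm:stab} of \cite{EFS1} provides and is all that is needed here.

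With such a bound in hand the argument is a dichotomy. Suppose first that $\frac{1}{\kappa h^2}\int|\partial A|^2\,dx> C_0 h^{-3}$ for a large constant $C_0$ to be chosen. Then, using $\Tr^-H_{A,V}\ge -C_1h^{-3}$, we get
\begin{equation*}
\E(A)\ge -C_1 h^{-3} + C_0 h^{-3} = (C_0-C_1)h^{-3}\ge c h^{-3}
\end{equation*}
as soon as $C_0> 2C_1$, say, which is the second alternative. Otherwise $\frac{1}{\kappa h^2}\int|\partial A|^2\,dx\le C_0 h^{-3}$, which is precisely \eqref{1-9}. Finally, in either case $\E^*=\E(A)\ge -C_1h^{-3}$ from the same magnetic stability bound together with the nonnegativity of the field-energy term, giving \eqref{1-8}.

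The one genuinely nontrivial ingredient is the magnetic stability estimate $\Tr^-H_{A,V}\ge -Ch^{-3}$ with $C$ independent of $A\in\sH^1$; everything else is bookkeeping. I expect to invoke it by reference to \cite{EFS1} (its analogue for the Pauli operator with $d=3$), noting that the scaling $h^{-2}$ in front of the field energy is exactly the one that makes the diamagnetic-type lower bound dimensionally consistent with the Weyl term $h^{-3}$. If one instead wants a self-contained derivation, the route is: bound $\Tr^-H_{A,V}$ below by $-\int e(x,x,0)\,dx$ where $e$ is the spectral density, use the pointwise bound for the magnetic Schrödinger/Pauli heat kernel (Kato's inequality to dominate by the nonmagnetic one up to the contribution of $B=\partial A$, whose $\sL^2$ norm is controlled by the assumed smallness), and integrate; the $\sL^4$ hypothesis on $V$ is what guarantees the relevant integrals converge. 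This is the step to treat carefully; the dichotomy itself is immediate.
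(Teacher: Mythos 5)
The dichotomy bookkeeping in your proposal is fine, but the ``one genuinely nontrivial ingredient'' you rely on in its crude form is false, and its falsity is precisely the point of the theorem. For the Pauli operator $H_{A,V}=\bigl((hD-A)\cdot\boldsigma\bigr)^2-V$ there is \emph{no} bound $\Tr^- H_{A,V}\ge -Ch^{-3}$ with $C$ independent of $A$: Loss--Yau zero modes show that for suitable $A$ the kinetic part $\bigl((hD-A)\cdot\boldsigma\bigr)^2$ has arbitrarily many zero modes localized where $V>0$, so $\inf_{A\in\sH^1}\Tr^- H_{A,V}=-\infty$. This is exactly why ``stability of matter in magnetic fields'' needs the field energy at all; the stability statement of \cite{EFS1} bounds the \emph{sum} $\Tr^- H_{A,V}+\kappa^{-1}h^{-2}\int|\partial A|^2\,dx$ from below, not the trace alone. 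Your proposed self-contained route via Kato/diamagnetic domination fails for the same reason: the diamagnetic inequality controls $(hD-A)^2$ but not the spin--field coupling $-h\,\boldsigma\cdot B$ hidden in the Pauli square.

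The correct argument --- and the paper's --- is the one you mention only in passing as an optional refinement: the magnetic Lieb--Thirring inequality of \cite{LLS}, which gives
\begin{equation*}
\Tr^- H_{A,V}\ \ge\ -Ch^{-3}\int V_+^{\frac{5}{2}}\,dx\ -\ Ch^{2}\Bigl(h^{-2}\int|\partial A|^2\,dx\Bigr)^{\frac{3}{4}}\Bigl(h^{-8}\int V_+^{4}\,dx\Bigr)^{\frac{1}{4}}.
\end{equation*}
The crucial feature is that the $A$-dependence enters only through the $\frac{3}{4}$ power of the field energy, so Young's inequality absorbs it into the positive term $\kappa^{-1}h^{-2}\int|\partial A|^2\,dx$ at the price of an additional $-C\delta^{3}h^{-3}$ (this is where the hypothesis $V\in\sL^{4}$ is used). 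After that absorption your dichotomy goes through essentially verbatim and yields both (\ref{1-8}) and the alternative (\ref{1-9}) or $\E(A)\ge ch^{-3}$. As written, however, the main line of your proposal rests on an $A$-independent stability bound that does not hold, so it establishes neither claim.
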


\begin{proof}
Using the Magnetic Lieb-Thirring inequality (\ref{LLS-eq:lt2})  of \cite{LLS})
\begin{multline}
\int \tr e_1(x,x,\tau)\, dx \ge \\
- Ch^{-3} \int V_+^{\frac{5}{2}}\,dx 
-Ch^2 \int \Bigl(h^{-2}\int |\partial A|^2\,dx\Bigr)^{\frac{3}{4}}
\Bigl(h^{-8} \int V_+^4\,dx \Bigr)^{\frac{1}{4}}
\label{1-10}
\end{multline}
we conclude that for any $\delta>0$ 
\begin{equation}
\E(A) \ge -Ch^{-3}-C\delta^3 h^{-3} +
 \bigl(\kappa^{-1}-\delta^{-1}) h^{-1}\int |\partial A|^2\,dx
\label{1-11}
\end{equation}
which implies both statements of the theorem.
\end{proof}

\begin{theorem}\label{thm-1-2}
Let  $V_+\in  \sL^{\frac{5}{2}}\cap \sL^4$, $\kappa \le ch^{-1} $ and 
\begin{equation}
V\le -K^{-1} (1+|x|)^\delta +K.
\label{1-12}
\end{equation}
Then there exists a minimizer $A$.
\end{theorem}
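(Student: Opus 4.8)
The plan is to run the direct method of the calculus of variations; the only point that needs care is that $A\mapsto\Tr^-H_{A,V}$ is not convex, so its weak lower semicontinuity along a minimizing sequence has to be extracted from spectral stability rather than from convexity. First I would fix a minimizing sequence $A_n\in\sH^1_0(B(0,1))$, $\E(A_n)\to\E^*$; here $\E^*$ is finite because $A=0$ is admissible and, by $(\ref{1-12})$, $H_{0,V}=(hD)^2-V$ has compact resolvent, so $-Ch^{-3}\le\E^*\le\E(0)=\Tr^-H_{0,V}\le\Weyl_1+O(h^{-1})$. Since $\E(A_n)$ therefore stays below $ch^{-3}$ (for $h$ small), Theorem~\ref{thm-1-1} forces $\kappa^{-1}h^{-2}\int|\partial A_n|^2\,dx\le Ch^{-3}$, and then the Poincar\'e inequality makes $(A_n)$ bounded in $\sH^1_0(B(0,1))$. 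Passing to a subsequence, $A_n\rightharpoonup A$ in that space (so $A$ is admissible), $A_n\to A$ strongly in every $\sL^p(\bR^3)$ with $p<6$ by Rellich--Kondrachov, while $B_n:=\nabla\times A_n\rightharpoonup B:=\nabla\times A$ only weakly in $\sL^2$. Along the way I would record two uniform facts about $H_n:=H_{A_n,V}$: since $\bigl((hD-A)\cdot\boldsigma\bigr)^2\ge0$ one has $H_n\ge-V$, hence $H_n\ge-K$ and every $H_n$ dominates the same confining minorant $(\ref{1-12})$, so each has discrete spectrum and finitely many negative eigenvalues; and, combining the diamagnetic inequality for the components of $\bigl((hD-A_n)\cdot\boldsigma\bigr)u$ with $\sH^1\hookrightarrow\sL^6$, interpolation and Young's inequality, one obtains a ``magnetic Sobolev'' estimate $\|u\|_{\sL^6}^2\le C(h)\langle(H_{A',V}+C_0)u,u\rangle$, uniformly for $A'$ ranging over $\{A_n\}\cup\{A\}$ (the constant involves only $h$ and $\sup_n\|B_n\|_{\sL^2}<\infty$), in particular $(H_{A',V}+C_0)^{-\frac12}\colon\sL^2\to\sL^6$ is bounded uniformly.

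The crux is to upgrade $A_n\rightharpoonup A$ to \emph{norm} resolvent convergence $H_n\to H_{A,V}$; this is the step I expect to be the main obstacle, since $A_n$ converges only weakly in $\sH^1$ and $B_n$ only weakly in $\sL^2$, so expanding $H_{A,V}-H_n$ through $B_n-B$ would be hopeless. The remedy is the telescoping identity for a difference of squares, which expresses the difference through $A_n-A$ \emph{alone}, with no derivative of it:
\begin{equation*}
H_{A,V}-H_n=\bigl((hD-A)\cdot\boldsigma\bigr)\bigl((A_n-A)\cdot\boldsigma\bigr)+\bigl((A_n-A)\cdot\boldsigma\bigr)\bigl((hD-A_n)\cdot\boldsigma\bigr).
\end{equation*}
Inserting this into $(H_n+C_0)^{-1}-(H_{A,V}+C_0)^{-1}=(H_n+C_0)^{-1}(H_{A,V}-H_n)(H_{A,V}+C_0)^{-1}$ produces two terms; using that for $n$ large $\bigl((hD-A')\cdot\boldsigma\bigr)$ is dominated (up to a factor $2$, by the identity $(hD-A')\cdot\boldsigma=(hD-A'')\cdot\boldsigma+(A''-A')\cdot\boldsigma$ together with the magnetic Sobolev bound) by $\langle(H_{A'',V}+C_0)\,\cdot\,,\cdot\rangle^{1/2}$ whenever $A',A''\in\{A_n\}\cup\{A\}$, the ``mixed'' factors $(H+C_0)^{-1}\bigl((hD-A')\cdot\boldsigma\bigr)$ and their adjoints are bounded on $\sL^2$ uniformly in $n$; hence each term is bounded in operator norm, via Hölder's inequality $\|(A_n-A)\cdot\boldsigma\,g\|_{\sL^2}\le\|A_n-A\|_{\sL^3}\|g\|_{\sL^6}$, by $C\,\|A_n-A\|_{\sL^3}\to0$. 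The weak-only convergence of $B_n$ does no harm precisely because $B_n$ stays buried inside $(hD-A_n)\cdot\boldsigma$ and cancels in the telescoping.

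It remains to pass to the limit in $\E$. Norm resolvent convergence of operators with purely discrete spectra yields $\lambda_k(A_n)\to\lambda_k(A)$ for every $k$, with $\lambda_k$ the $k$-th eigenvalue counted with multiplicity; picking $r>0$ with $\pm r\notin\Spec(H_{A,V})$ and $\Spec(H_{A,V})\cap[-r,r]\subseteq\{0\}$, the stability of isolated finite-multiplicity spectrum shows the number of negative eigenvalues of $H_n$ stays bounded by a fixed $N_0$ for large $n$, whence $\Tr^-H_n=\sum_{k\le N_0}\min(\lambda_k(A_n),0)\to\sum_{k\le N_0}\min(\lambda_k(A),0)=\Tr^-H_{A,V}$. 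Since $A\mapsto\kappa^{-1}h^{-2}\int|\partial A|^2\,dx$ is convex and strongly continuous, hence weakly lower semicontinuous,
\begin{equation*}
\E^*=\lim_n\E(A_n)\ \ge\ \Tr^-H_{A,V}+\kappa^{-1}h^{-2}\int|\partial A|^2\,dx\ =\ \E(A)\ \ge\ \E^*,
\end{equation*}
so $\E(A)=\E^*$ and $A$ is the desired minimizer.
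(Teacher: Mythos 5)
Your argument is correct, but it reaches the key semicontinuity of $\Tr^-$ by a genuinely different mechanism than the paper. The paper works directly with eigenfunctions: it takes the (finitely many, by the magnetic Lieb--Thirring bound (\ref{1-10})) non-positive eigenvalues $\lambda_{j,k}$ of $H_{A_j,V}$, shows the corresponding orthonormal eigenfunctions are bounded in $\sH^1$ and, using the confining bound (\ref{1-12}), that $\|(1+|x|)^{\sigma/2}u_{j,k}\|$ stays bounded, so the eigenfunctions converge strongly in $\sL^2$; hence every limit $\bar\lambda\le 0$ of eigenvalues is an eigenvalue of $H_{A_\infty,V}$ with at least the accumulated multiplicity, which yields $\lim_j\Tr^- H_{A_j,V}\ge \Tr^- H_{A_\infty,V}$ and, together with weak lower semicontinuity of the field term, minimality of $A_\infty$. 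You instead upgrade $A_n\rightharpoonup A$ to \emph{norm resolvent} convergence via the telescoped difference $P^2-P_n^2=P\,\bigl((A_n-A)\cdot\boldsigma\bigr)+\bigl((A_n-A)\cdot\boldsigma\bigr)\,P_n$ (with $P=(hD-A)\cdot\boldsigma$, $P_n=(hD-A_n)\cdot\boldsigma$) combined with the strong $\sL^3$ convergence of $A_n$ --- available precisely because the admissible class is $\sH^1_0(B(0,1))$, so Rellich gives global, not just local, strong convergence --- and then invoke spectral stability; this gives the stronger statement $\Tr^- H_{A_n,V}\to\Tr^- H_{A,V}$ and avoids any discussion of eigenfunctions, at the price of the operator-theoretic bookkeeping (uniform magnetic Sobolev bounds, domination of $P$ and $P_n$ by the quadratic forms). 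Both routes use (\ref{1-12}): the paper for tightness of the eigenfunctions, you for the uniform bound $V\le K$ and for discreteness of the spectrum. The one soft spot in your write-up is that last point: the operator inequality $H_n\ge -V\ge K^{-1}(1+|x|)^\delta-K$ \emph{alone} does not imply discrete spectrum (a confining multiplication operator dominates itself and has purely essential spectrum), so you should state explicitly that the quadratic form of $H_n+C_0$ also controls $\|hDu\|^2$ --- which is exactly what your magnetic Sobolev derivation produces uniformly in $n$ --- whence the form domain embeds compactly into $\sL^2$ and the resolvent is compact; with that remark the spectral-stability step, and hence the whole proof, is complete.
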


\begin{proof}
Consider a minimizing sequence $A_j$. Without any loss of the generality one can assume that $A_j\to A_\infty$ weakly in $\sH^1$ and  in $\sL^6$ and strongly in $\sL^p_\loc$ with any $p<6$. Then $A_\infty$ is a minimizer.

Really, due to (\ref{1-10}) non-positive spectra of $H_{A_j,V}$ are discrete and the number of  non-positive eigenvalues is bounded by $N_{h}$. Without any loss of the generality one can assume that $\lambda_{j,k}$ have limits (we go to the subsequence if needed) which are either $<0$ or $=0$. Here $\lambda_{j,k}$ are ordered eigenvalues of $H_{A_j,V}$. 

We claim that those limits are also eigenvalues and if $\lambda_{j,k},\ldots,\lambda_{j, k+r-1}$ have the same limit 
$\bar{\lambda}\le 0$, it is eigenvalue of at least multiplicity $r$. Indeed, let $u_{j,k}$ be corresponding eigenfunctions, orthonormal in $\sL^2$. Then in virtue of $A_j$ being bounded in $\sL^6$ and $V\in \sL^4$ we can estimate 
\begin{equation*}
\|Du_{j,k}\|\le K \|u_{j,k}\|_{6}^{1-\sigma}\cdot \|u_{j,k}\|^\sigma\le K\|Du_{j,k}\|^{1-\sigma}\cdot \|u_{j,k}\|^\sigma 
\end{equation*}
with $\sigma>0$ which implies $\|Du_{j,k}\|\le K$. Also assumption (\ref{1-12}) implies that $\| (1+|x|)^{\sigma/2 }u_{j,k}\|$ are bounded and therefore without any loss of the generality one can assume that $u_{j,k}$ converge strongly.

Then 
\begin{equation*}
\lim_{j\to\infty} \Tr^- H_{A_j,V} \ge \Tr^- H_{A_\infty,V},\qquad
\liminf\int |\partial A_j|^2\, dx \ge 
\int |\partial A_\infty |^2\, dx
\end{equation*}
and therefore $\E(A_\infty)\le \E^*$ (and then it is a minimizer and there are equalities and in particular there no other eigenvalues of $H_{A_\infty, V}$. 
\end{proof}

\begin{remark}\label{rem-1-3}
We don't know if the minimizer is unique. Also we do not impose here any restrictions on $K$ (which may depend on $h$) in (\ref{1-12}) or $\kappa>0$. From now on until further notice let $A$ be a minimizer.
\end{remark}

\begin{proposition}\label{prop-1-4}
Let $A$ be a minimizer. Then
\begin{multline}
\frac{2}{\kappa h^2} \Delta A_j (x)   = \Phi_j\Def\\
-\sum_k \bigl(\upsigma_j\upsigma_k (hD_k-A_k)_x  + 
\upsigma_k\upsigma_j (hD_k-A_k)_y  \bigr) e (x,y,\tau) |_{y=x}
\label{1-13}
\end{multline}
where $A=(A_1,A_2,A_3)$, $\boldsigma=(\upsigma_1, \upsigma_2, \upsigma_3)$ and $e(x,y,\tau)$ is the Schwartz kernel of the spectral projector of $H_{A,V}$.
\end{proposition}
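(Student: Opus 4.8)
The plan is to compute the Euler–Lagrange equation for the functional $\E(A)$ at a minimizer $A$, i.e.\ to set $\frac{d}{ds}\E(A+sa)\big|_{s=0}=0$ for every test field $a\in\sH^1_0(B(0,1))$ (with values in $\bR^3$). The term $\kappa^{-1}h^{-2}\int|\partial A|^2\,dx$ is quadratic and smooth in $A$, so its Gâteaux derivative is $2\kappa^{-1}h^{-2}\int \partial A:\partial a\,dx = -2\kappa^{-1}h^{-2}\int \Delta A_j\, a_j\,dx$ after integration by parts (the boundary term vanishes since $a$ is compactly supported). The substance of the argument is to differentiate $\Tr^-H_{A,V}$ in $A$ and to recognize the answer as $\int \Phi_j a_j\,dx$ with $\Phi_j$ as in \eqref{1-13}.

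First I would justify the differentiation of $\Tr^-H_{A,V}=\sum_k \lambda_k^-$. Since $H_{A,V}$ depends quadratically on $A$ — expanding, $H_{A+sa,V}=H_{A,V} - s\sum_k\bigl(\upsigma_j\upsigma_k(hD_k-A_k)a_j + a_j\upsigma_k\upsigma_j(hD_k-A_k)\bigr) + s^2\sum a\text{-terms}$ — the perturbation is relatively form-bounded, and by Theorem~\ref{thm-1-1} (or rather the Lieb–Thirring bound \eqref{1-10}) the negative spectrum is discrete with a uniformly bounded count. The function $s\mapsto\Tr^-H_{A+sa,V}$ is therefore a finite sum of eigenvalue branches, and $\Tr^-$ is differentiable at $s=0$ provided $0$ is not an eigenvalue, or more carefully one uses that $\Tr^-$ is a concave function of the operator (being an infimum of traces over projections) so one-sided derivatives always exist and the minimality of $A$ forces the two-sided derivative to vanish. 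By first-order perturbation theory (Hellmann–Feynman), summed over the occupied eigenvalues, $\frac{d}{ds}\Tr^-H_{A+sa,V}\big|_{s=0} = \tr\bigl(\dot H \, e(\cdot,\cdot,0)\bigr)$ where $\dot H = -\sum_k\bigl(\upsigma_j\upsigma_k(hD_k-A_k) + \upsigma_k\upsigma_j(hD_k-A_k)\bigr)a_j$ acting as multiplication/differentiation against the test field, and $e(x,y,\tau)$ is the Schwartz kernel of the spectral projector $\mathbb{1}_{(-\infty,\tau)}(H_{A,V})$ at $\tau=0$.

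Writing this trace out as an integral kernel pairing gives exactly $\int \Phi_j(x) a_j(x)\,dx$ with $\Phi_j$ equal to the right-hand side of \eqref{1-13} — the symmetrization in the $x$ and $y$ variables of $(hD_k-A_k)$ and the two orderings $\upsigma_j\upsigma_k$ versus $\upsigma_k\upsigma_j$ arise precisely from the two ways the first-order term of $((hD-A-sa)\cdot\boldsigma)^2$ can be generated. Combining the two derivative computations, $-2\kappa^{-1}h^{-2}\int\Delta A_j\,a_j\,dx + \int\Phi_j a_j\,dx = 0$ for all $a$, which yields \eqref{1-13} pointwise (in the distributional sense).

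The main obstacle is the rigorous differentiability of $\Tr^-H_{A,V}$ at the minimizer and the handling of a possible eigenvalue at $0$: if $\lambda=0$ sits at the edge, the naive Hellmann–Feynman sum is ambiguous. The clean way around this, which I would adopt, is to note that at the minimizer one may always choose the spectral projector onto $(-\infty,0)$ (a zero eigenvalue contributes nothing to $\Tr^-$), observe that $\E$ is a sum of a concave functional of $H$ and a strictly convex functional of $A$, and use that a concave function has matching one-sided directional derivatives at an interior minimum of the sum only if the extreme case (the spectral subspace) is stationary — this is exactly the content of the Feynman–Hellmann argument made on the eigenprojection $e(\cdot,\cdot,0)$, and it is precisely how the analogous stationarity equation is derived in \cite{EFS2}. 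Elliptic regularity for the Poisson-type equation \eqref{1-13} (with $\Phi_j$ controlled in a suitable space via \eqref{1-10}) then upgrades $A\in\sH^1$ to enough smoothness to make all the above manipulations legitimate a posteriori.
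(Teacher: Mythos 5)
Your proposal is correct and follows the paper's overall strategy (Euler--Lagrange equation for $\E(A)$, with the field-energy term contributing $-2\kappa^{-1}h^{-2}\Delta A_j$ and the trace term contributing $-\Phi_j$), but the central identity $\updelta\,\Tr^-H=\Tr\bigl((\updelta H)\,\uptheta(-H)\bigr)$ is justified by a genuinely different mechanism. The paper starts from the contour-integral representation \eqref{1-14} of the spectral projector, varies the resolvent, integrates by parts in $\tau$ inside \eqref{1-15}, and reads off the kernel expression ``after simple calculations''; the operator-calculus steps are formal and nothing is said about differentiability of $\Tr^-$ or about a possible eigenvalue at $0$. You instead use the discreteness and finite count of the negative spectrum (from the Lieb--Thirring bound \eqref{1-10}) to reduce to finitely many eigenvalue branches and apply Hellmann--Feynman, supplementing this with the concavity of $\Tr^-$ (an infimum of linear functionals $\Tr(H\gamma)$ over $0\le\gamma\le 1$) to guarantee one-sided derivatives and to dispose of the edge case $\lambda=0$, where the naive sum is ambiguous. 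Your route is closer to the derivation in \cite{EFS2} and is more careful on exactly the points the paper elides; the paper's resolvent computation is shorter and extends verbatim to the regularized functional in proposition~\ref{prop-3-4}, which is presumably why it is phrased that way. The identification of $\updelta H$ with the symmetrized expression generating the $(hD_k-A_k)_x$ and $(hD_k-A_k)_y$ terms of \eqref{1-13} is the same in both arguments.
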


\begin{proof}
Consider variation $\updelta A$ of $A$ and variation of $\Tr^- (H)$. Note that the spectral projector of $H$ is 
\begin{equation}
\uptheta (\tau- H) = 
\frac{1}{2\pi i} \int_{-\infty}^\tau \Res_\bR (\tau -H)^{-1}
\label{1-14}
\end{equation}
and therefore
\begin{multline*}
\updelta \Tr \uptheta (\tau- H) = \frac{1}{2\pi i} \int_{-\infty}^\tau \Res_\bR \Tr (\tau -H)^{-1} (\updelta H) (\tau -H)^{-1}= \\
\frac{1}{2\pi i} \int_{-\infty}^\tau \Res_\bR \Tr  (\updelta H) (\tau -H)^{-2} = 
-\partial_\tau \frac{1}{2\pi i} \int_{-\infty}^\tau \Res_\bR \Tr  (\updelta H) (\tau -H)^{-1} = \\
-\partial_\tau \Tr (\updelta H) \uptheta (\tau-H).
\end{multline*}
Plugging it into
\begin{equation}
\Tr^- (H)= \int _{-\infty}^0 \tau d_\tau \Tr \uptheta (\tau- H)=
-\int _{-\infty}^0 \Tr \uptheta (\tau- H)\,d\tau
\label{1-15}
\end{equation}
and integrating  with respect to $\tau$ we arrive after simple calculations to
\begin{equation}
\updelta \Tr^- (H)=\Tr (\updelta H) \uptheta (\tau- H)=-\int \Phi (x) \updelta A(x)\,dx
\label{1-16}
\end{equation}
where $\Phi(x)$ is the right-hand expression of (\ref{1-13}). Therefore
\begin{equation}
\updelta \E(A)= \int \bigl(-\Phi (x)-\frac{2}{\kappa h^2} \Delta A(x)\bigr) \updelta A(x)\,dx
\label{1-17}
\end{equation}
which implies (\ref{1-13}).
\end{proof}

\begin{proposition}\label{prop-1-5}
If for  $\kappa=\kappa^*$ 
\begin{gather}
\E^* \ge \Weyl_1 - CM\label{1-18}\\
\intertext{with $M\ge C h^{-1}$ then for $\kappa \le \kappa^*(1-\epsilon_0)$}
\frac{1}{\kappa h^2} \int |\partial A|^2\,dx \le C_1M.\label{1-19}
\end{gather}
\end{proposition}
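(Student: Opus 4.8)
The plan is to exploit minimality of $A$ for the parameter $\kappa$ together with the lower bound at $\kappa^*$, by a simple convexity/comparison trick in the coupling constant. Write $\E_\kappa(A)$ for the functional in \eqref{1-3} with coupling $\kappa$, and let $A$ be the minimizer for $\kappa\le\kappa^*(1-\epsilon_0)$. The key algebraic identity is that the two functionals differ only in the field term:
\begin{equation*}
\E_{\kappa^*}(A)=\E_\kappa(A)+\bigl(\kappa^{-1}-(\kappa^*)^{-1}\bigr)^{-1}\!\!\cdot\Bigl(\kappa^{-1}-(\kappa^*)^{-1}\Bigr)h^{-2}\!\int|\partial A|^2\,dx,
\end{equation*}
or more transparently $\E_{\kappa^*}(A)=\E_\kappa(A)+\bigl((\kappa^*)^{-1}-\kappa^{-1}\bigr)h^{-2}\int|\partial A|^2\,dx$ — note the coefficient is \emph{negative} since $\kappa<\kappa^*$, so this reads
\begin{equation*}
\E_{\kappa}(A)=\E_{\kappa^*}(A)+\bigl(\kappa^{-1}-(\kappa^*)^{-1}\bigr)h^{-2}\!\int|\partial A|^2\,dx .
\end{equation*}

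First I would bound $\E_{\kappa^*}(A)$ from below: since $A$ is an admissible field for the variational problem defining $\E^*$ at $\kappa^*$, we have $\E_{\kappa^*}(A)\ge \E^*|_{\kappa=\kappa^*}\ge \Weyl_1-CM$ by hypothesis \eqref{1-18}. Next I would bound $\E_\kappa(A)$ from above: taking the trial field $A=0$ and the Weyl upper bound \eqref{1-4} (equivalently, comparing with the minimum, which is $\le \E_\kappa(0)=\Tr^-H_{0,V}\le \Weyl_1+Ch^{-1}$) gives $\E_\kappa(A)\le \Weyl_1+Ch^{-1}\le\Weyl_1+CM$ using $M\ge Ch^{-1}$. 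Substituting both estimates into the displayed identity yields
\begin{equation*}
\bigl(\kappa^{-1}-(\kappa^*)^{-1}\bigr)h^{-2}\!\int|\partial A|^2\,dx=\E_\kappa(A)-\E_{\kappa^*}(A)\le 2CM,
\end{equation*}
and since $\kappa\le\kappa^*(1-\epsilon_0)$ forces $\kappa^{-1}-(\kappa^*)^{-1}\ge \epsilon_0(\kappa^*)^{-1}\ge c\kappa^{-1}$ (using $\kappa\le\kappa^*$), we get $c\,\kappa^{-1}h^{-2}\int|\partial A|^2\,dx\le 2CM$, i.e. \eqref{1-19}.

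The main thing to watch is that the Weyl upper bound \eqref{1-4} is stated as an estimate on $\E^*$ (the infimum), which immediately bounds $\E_\kappa(A)=\E_\kappa^{\min}$ from above — so no regularity beyond what \eqref{1-4} already assumes is needed here. The only genuine subtlety is the bookkeeping of the coefficient $\kappa^{-1}-(\kappa^*)^{-1}$: one must make sure it is bounded below by a fixed multiple of $\kappa^{-1}$, which is exactly where the gap condition $\kappa\le\kappa^*(1-\epsilon_0)$ enters and why a strict inequality (not merely $\kappa\le\kappa^*$) is required. Everything else is a one-line comparison, so I expect no real obstacle; the constant $C_1$ in \eqref{1-19} depends only on $\epsilon_0$ and the constants in \eqref{1-4} and \eqref{1-18}.
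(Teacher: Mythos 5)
Your argument is exactly the paper's (the paper's proof is the one-line remark that the claim is ``obvious based also on the upper estimate $\E^*\le \Weyl_1+Ch^{-1}$''): compare $\E_{\kappa}(A)\le \Weyl_1+Ch^{-1}\le \Weyl_1+CM$ for the minimizer $A$ at coupling $\kappa$ with $\E_{\kappa^*}(A)\ge \Weyl_1-CM$ from \eqref{1-18}, and divide the difference, which equals $\bigl(\kappa^{-1}-(\kappa^*)^{-1}\bigr)h^{-2}\int|\partial A|^2\,dx$, by the coefficient. One intermediate inequality is written backwards --- $\epsilon_0(\kappa^*)^{-1}\ge c\,\kappa^{-1}$ does \emph{not} follow from $\kappa\le\kappa^*$ (that gives $(\kappa^*)^{-1}\le\kappa^{-1}$) --- but the bound you actually need, $\kappa^{-1}-(\kappa^*)^{-1}\ge\epsilon_0\kappa^{-1}$, follows directly from $(\kappa^*)^{-1}\le(1-\epsilon_0)\kappa^{-1}$, so the proof stands as is.
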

\begin{proof}
Proof is obvious based also on the upper estimate $\E^*\le \Weyl_1+Ch^{-1}$.
\end{proof}

\section{Estimates. I}
\label{sect-1-3} 

\begin{proposition}\label{prop-1-6}
Let \textup{(\ref{1-19})} be fulfilled and let
\begin{equation}
\varsigma = \kappa M h^{\frac{3}{2}} \le c
\label{1-20}
\end{equation}
Then as $\tau\le c$ 

\begin{enumerate}[label=(\roman*), fullwidth] 
\item
Operator norm in $\sL^2$ of $(hD)^k \uptheta(\tau -H)$ does not exceed $C$ for $k=0,1,2$;

\item
Operator norm in $\sL^2$ of 
$(hD)^k\bigl((hD-A)\cdot\boldsigma\bigr) \uptheta(\tau -H)$ does not exceed $C$ for $k=0,1$.
\end{enumerate}
\end{proposition}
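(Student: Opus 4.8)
The plan is to reduce everything to a single \emph{a priori} resolvent-type estimate for $H=H_{A,V}$ on the negative (and slightly positive) part of its spectrum, exploiting that $\uptheta(\tau-H)$ is a spectral projector and therefore bounded by $1$ in $\sL^2$, so that the only thing to control is how many powers of $hD$ (and of the Pauli operator $(hD-A)\cdot\boldsigma$) one may insert while staying bounded. First I would record the trivial facts $\|\uptheta(\tau-H)\|\le 1$ and, for any polynomially bounded function $f$, $\|f(H)\uptheta(\tau-H)\|\le \sup_{s\le\tau}|f(s)|$; in particular $H\uptheta(\tau-H)$ and $H^2\uptheta(\tau-H)$ are bounded by $C$ for $\tau\le c$. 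Since $H+V(x)=\bigl((hD-A)\cdot\boldsigma\bigr)^2\ge 0$ and $V\in\sL^{5/2}\cap\sL^4$, I would use form-boundedness of $V$ with respect to $(hD)^2$ (with small relative bound after rescaling, which is exactly where $V\in\sL^{5/2}$ with $d=3$ and the semiclassical parameter enter) to get $\|(hD-A)\cdot\boldsigma\,u\|^2\le C\|Hu\|\cdot\|u\|+C\|u\|^2$; applied to $u=\uptheta(\tau-H)v$ this yields statement (ii) for $k=0$, and statement (i) for $k=1$ once I control the difference between $(hD-A)\cdot\boldsigma$ and $hD$ using $\|A\|_{\sL^6}$, which is bounded by $\|\partial A\|_{\sL^2}$ via Sobolev, hence by $\bigl(\kappa h^2 C_1 M\bigr)^{1/2}=C\varsigma^{1/2}h^{1/2}$ through (\ref{1-19}) and (\ref{1-20}) — small.

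For the second powers I would iterate: $\|(hD)^2\uptheta(\tau-H)v\|$ is estimated by writing $(hD)^2 = \bigl((hD-A)\cdot\boldsigma\bigr)^2 + (\text{lower order in }A)$, using $\bigl((hD-A)\cdot\boldsigma\bigr)^2\uptheta(\tau-H) = (H+V)\uptheta(\tau-H)$, bounding $H\uptheta(\tau-H)$ by $C$ as above, and bounding $V\uptheta(\tau-H)$ by interpolating: $\|Vu\|\le \|V\|_{\sL^4}\|u\|_{\sL^{\infty}\cap\dots}$ is too crude, so instead I would use $\|Vu\|\le \epsilon\|(hD)^2 u\|+C_\epsilon\|u\|$ again (the same relative-bound inequality, now with $(hD)^2$ on the right), giving a closed inequality for $\|(hD)^2\uptheta(\tau-H)v\|$ that absorbs the $\epsilon$-term. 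The cross terms involving $A$, namely $A\cdot hD$ and $(hD\cdot A)$, are handled by the same Sobolev bound on $\|A\|_{\sL^6}$ together with $\|hD\,\uptheta(\tau-H)\|\le C$ just proved; here the smallness $\varsigma\le c$ is what keeps these perturbations from blowing up. Statement (ii) with $k=1$ follows by combining the $k=0$ case of (ii) with the $k=1$ case of (i) and one more commutator/Sobolev estimate for $[hD,A]$ and for $A$ itself.

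The main obstacle I anticipate is making the relative-boundedness of $V$ uniform in $h$: one needs $\|V\,u\|\le \epsilon\|(hD)^2u\|+C_\epsilon\|u\|$ with constants independent of $h$, which is \emph{not} automatic from $V\in\sL^{5/2}\cap\sL^4$ alone — the natural scaling relates $V$ to $h^2(hD)^2$, so the clean statement is $\|V u\|^2\le \epsilon\|\,((hD)^2+1)u\|^2 + C_\epsilon h^{-?}\|u\|^2$ and one must check the power of $h$ is harmless on the range $\tau\le c$ where $\|((hD)^2+1)\uptheta(\tau-H)\|$ has already been shown bounded by $C$. I would settle this by the standard argument: split $V=V\mathbf{1}_{\{|V|\le R\}}+V\mathbf{1}_{\{|V|>R\}}$, estimate the first piece by $R\|u\|$ and the second by H\"older ($\sL^{5/2}$ or $\sL^3$ in $d=3$) times the Sobolev norm $\|u\|_{\sL^6}\le C\|hD\,u\|/h\cdot h$ — more precisely $\|u\|_{\sL^6}\le C h^{-1}\|hD\,u\|$, and then choose $R$ large; the apparent $h^{-1}$ is precisely offset because we only ever apply this to $u=\uptheta(\tau-H)v$ for which $\|hD\,u\|\le C\|u\|$, so effectively $\|u\|_{\sL^6}\le Ch^{-1}\|u\|$ and the tail of $V$ in $\sL^3$ being small makes the product small after choosing $R=R(h)$ appropriately. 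Once this uniform relative bound is in hand, (i) and (ii) are a short bootstrap, and the hypotheses (\ref{1-19})–(\ref{1-20}) are used only to render every $A$-dependent correction $o(1)$.
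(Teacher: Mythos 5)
Your overall strategy is the same bootstrap the paper uses: set $u=\uptheta(\tau-H)f$, exploit $\bigl((hD-A)\cdot\boldsigma\bigr)^2=H+V$ so that the Pauli-gradient of $u$ is controlled by the spectral localization, convert $\|\partial A\|_{\sL^2}$ into $\|A\|_{\sL^6}$ by Sobolev and absorb the $A$-corrections using (\ref{1-19})--(\ref{1-20}), then iterate via $h^2D^2=(hD-A)^2+A(hD-A)+AhD-h[D,A]$ for $k=2$. One genuine (minor) difference: you compare $\boldsigma\cdot(hD-A)$ directly with $\boldsigma\cdot hD$, which costs only $\|Au\|\le\|A\|_6\|u\|_3$; the paper instead routes through the full magnetic gradient $\|(hD-A)u\|$, and must therefore also bound the field term $h(|B|u,u)$ with $B=\nabla\times A$, using $\|B\|_{\sL^2}\le\|\partial A\|$ and the interpolation $\|u\|_4^2\le\|u\|\,\|u\|_6$. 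Your shortcut is legitimate and marginally cleaner at this step.

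The place where your write-up does not close is exactly the point you flag yourself: the uniform relative bound for $V$. The truncation $V=V\mathbf{1}_{\{|V|\le R\}}+V\mathbf{1}_{\{|V|>R\}}$ combined with $\|u\|_{\sL^6}\le Ch^{-1}\|hDu\|$ forces you to take $R=R(h)\to\infty$ to make the tail term absorbable (you need $\|V\mathbf{1}_{\{|V|>R\}}\|_{\sL^{3/2}}\lesssim\epsilon h^{2}$), and then the bounded piece contributes $R(h)\|u\|^2$, which destroys the $h$-uniform constant $C$; moreover the claim that the $h^{-1}$ ``is precisely offset because $\|hDu\|\le C\|u\|$'' is circular, since that inequality is what is being proved. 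The paper does not solve this either: it simply writes $\|(hD-A)u\|^2\le C\|u\|^2+(h|B|u,u)$, i.e.\ it takes $(Vu,u)\le C\|u\|^2$ for granted, which is the intended reading ($V_+$ bounded, as in the $\sC^{2,1}$ setting of all the applications; integrability of $V$ alone is used only in theorem~\ref{thm-1-1}). So either add boundedness of $V_+$ as a hypothesis, as the paper implicitly does, or replace your truncation argument by an actual $h$-uniform form bound; as written, your third paragraph asserts a resolution that does not hold.
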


\begin{proof}
(i) Let $u =\uptheta(\tau-H) f$. Then 
$\|u\|\le \|f\|$ and as 
\begin{equation}
\|A\|_6 \le C\| \partial A\| \le C(\kappa M)^{\frac{1}{2}}h
\label{1-21}
\end{equation}
we conclude that 
\begin{multline*}
\|hD u\| \le \|(hD-A)u\| +\|A u\| \le 
\|(hD-A)u\| +C\|A \|_6 \|u\|_3\le\\[2pt]
\|(hD-A)u\| +C(\kappa M)^{\frac{1}{2}}h \|u\|^{3/4}\|u\|_6^{1/4}\le\\[2pt]
\|(hD-A)u\| +C\varsigma^{\frac{1}{2}} \|u\|^{3/4}\|h Du\|^{1/4}\le\quad \\[2pt]
\|(hD-A)u\| + \frac{1}{2} \|hDu\| + C\varsigma  \|u\|
\end{multline*}
so due to (\ref{1-20})
\begin{equation}
\|hD u\| \le 2\|(hD-A)u\| +C\|u\|
\label{1-22}
\end{equation}
On the other  hand, for $B=\nabla \times A$ and $\tau\le c$
\begin{multline*}
\|(h D-A)u\| ^2 \le C\|u\|^2 + (h |B|u,u)\le C\|u\|^2 + h\|B\|  \|u\|_4^2 \le \\[3pt]
C\|u\|^2 +  C (\kappa M)^{\frac{1}{2}} h^2  \|u\|  \cdot \|u\|_6 \le 
C\|u\|^2 + C (\kappa M)^{\frac{1}{2}}h  \|u\| \| h D u\|
\end{multline*}
and due to (\ref{1-22}) we conclude that 
\begin{equation}
\|hDu\| + \|(h D-A)u\|  \le C\|u\|.
\label{1-23}
\end{equation}
So, for $j=0,1$  statement (i) is proven. 

Further, as $h^2D^2=(hD-A)^2 + A(hD-A) +A hD -h[D,A]$ we in the same way as before (and using (\ref{1-23}) conclude that 
\begin{gather*}
\|h^2D^2u\| 
\le C\|u\|^2 +  \frac{1}{4} \|hD (hD-A)u\|+ \frac{1}{4} \| h^2D^2u\| \\
\shortintertext{and therefore}
\|h^2D^2u\|\le C\|u\|^2 + C \|A hDu \| 
\end{gather*}
and repeating the same arguments we get  $\|h^2D^2 u\| \le C\|u\|$; so for $j=2$ statement (i) is proven. 

\medskip\noindent
(ii) Statement (ii) is proven in the same way. 
\end{proof}

\begin{corollary}\label{cor-1-7}
Let \textup{(\ref{1-19})} and \textup{(\ref{1-20})} be fulfilled. Then  as 
$\tau\le c$
\begin{equation}
e(x,x,\tau) \le Ch^{-3},\qquad 
|\bigl((hD-A)\cdot \boldsigma )  e(x,y,\tau)|_{x=y} |\le Ch^{-3}.
\label{1-24}
\end{equation}
\end{corollary}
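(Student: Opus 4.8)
The plan is to realize $e(x,y,\tau)$ as the Schwartz kernel of the selfadjoint spectral projector $P\Def\uptheta(\tau-H)$, so that $P=P^*=P^2$ and $e(z,x,\tau)=e(x,z,\tau)^*$, and then to turn the operator bounds of Proposition~\ref{prop-1-6} into pointwise bounds on the diagonal via the semiclassical Sobolev embedding in $\bR^3$,
\begin{equation*}
\|u\|_{\sL^\infty}\le Ch^{-\frac32}\bigl(\|u\|+\|(hD)^2u\|\bigr),
\end{equation*}
which follows from $|u(x)|\le(2\pi)^{-3}\int|\hat u(\xi)|\,d\xi$, Cauchy--Schwarz and $\int\langle h\xi\rangle^{-4}\,d\xi=Ch^{-3}$.

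For the first inequality I would argue as follows. For $f\in\sL^2$ one has $(Pf)(x)=\int e(x,y,\tau)f(y)\,dy$; by Proposition~\ref{prop-1-6}(i), $\|Pf\|+\|(hD)^2Pf\|\le C\|f\|$, so $|(Pf)(x)|\le Ch^{-3/2}\|f\|$, and testing over all $f$ gives $\|e(x,\cdot,\tau)\|_{\sL^2}\le Ch^{-3/2}$ uniformly in $x$. Since $P=P^*P$,
\begin{equation*}
\tr e(x,x,\tau)=\int\tr\bigl(e(x,y,\tau)e(x,y,\tau)^*\bigr)\,dy=\|e(x,\cdot,\tau)\|_{\sL^2}^2\le Ch^{-3},
\end{equation*}
and as $e(x,x,\tau)\ge0$ this is the first inequality in \textup{(\ref{1-24})}.

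For the second inequality, write $Q=(hD-A)\cdot\boldsigma$. Using $P=P^2$ and $e(z,x,\tau)=e(x,z,\tau)^*$,
\begin{equation*}
\bigl(Q_xe(x,y,\tau)\bigr)\big|_{y=x}=\int\bigl(Q_xe(x,z,\tau)\bigr)\,e(z,x,\tau)\,dz,
\end{equation*}
so by Cauchy--Schwarz this is at most $\|Q_xe(x,\cdot,\tau)\|_{\sL^2}\cdot\|e(x,\cdot,\tau)\|_{\sL^2}\le Ch^{-3/2}\,\|Q_xe(x,\cdot,\tau)\|_{\sL^2}$, and it remains to bound $\|Q_xe(x,\cdot,\tau)\|_{\sL^2}$, whose square is the value on the diagonal of the kernel of the nonnegative operator $QP(QP)^*=QP^2Q=QPQ$. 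I would estimate that diagonal by imitating the Sobolev step with $QPQ$ in place of $P$; the point that makes this work is the algebraic identity $Q^2=\bigl((hD-A)\cdot\boldsigma\bigr)^2=H+V$, which shows that on $\Ran P$ one has $\|Qu\|^2=((H+V)u,u)\le C\|u\|^2$ whenever $\tau\le c$ (there $H$ is bounded below and $V$ is bounded on the relevant set; near the boundary and at infinity we are in the classically forbidden region and there is nothing to prove). Thus the outermost factors $Q$ act boundedly on the range of the projector, every derivative that one produces is either pushed onto $\uptheta(\tau-H)$, where Proposition~\ref{prop-1-6}(i),(ii) applies, or is absorbed by turning a pair of $Q$'s into the bounded operator $(H+V)P$; keeping track of the commutators with $V\in\sC^{2,1}$ produces only admissible lower order terms.

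The main obstacle is precisely this second estimate, and its source is that $A$ belongs only to $\sH^1$: the operator $(hD-A)\cdot\boldsigma$ does not gain semiclassical regularity naively — indeed $(hD)^2\bigl((hD-A)\cdot\boldsigma\bigr)\uptheta(\tau-H)$ is not controlled by Proposition~\ref{prop-1-6}, since commuting $(hD)^2$ through $(hD-A)\cdot\boldsigma$ generates a term with $\Delta A$, which is merely in $\sH^{-1}$ — so one cannot simply repeat the proof of the first estimate. The device to get around this is to never differentiate $A$ more than once and to work with the symmetric two‑sided object $\bigl((hD-A)\cdot\boldsigma\bigr)\uptheta(\tau-H)\bigl((hD-A)\cdot\boldsigma\bigr)$, trading pairs of $Q$'s for $H+V$ via $Q^2=H+V$; if even this organization is not quite enough for a clean pointwise statement, one falls back on the averaged form $\int\psi(x)\,\bigl|\bigl(Q_xe(x,y,\tau)\bigr)|_{y=x}\bigr|\,dx\le Ch^{-3}\int\psi$, which is all that is needed downstream and which reduces by the above to $\Tr\bigl(\psi\,(H+V)P\bigr)=O(h^{-3})$.
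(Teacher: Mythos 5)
Your treatment of the first inequality in (\ref{1-24}) is fine and is exactly the paper's mechanism: proposition~\ref{prop-1-6}(i) plus the semiclassical Sobolev embedding give the $\sL^2\to\sC$ bound $\|\uptheta(\tau-H)f\|_\infty\le Ch^{-3/2}\|f\|$, hence $\|e(x,\cdot,\tau)\|_{\sL^2}\le Ch^{-3/2}$ and $e(x,x,\tau)\le Ch^{-3}$.

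For the second inequality there is a genuine gap at the key step. Write $P\Def\uptheta(\tau-H)$, $Q\Def(hD-A)\cdot\boldsigma$. Your Cauchy--Schwarz reduction to $\|Q_xe(x,\cdot,\tau)\|_{\sL^2}$, i.e.\ to the diagonal of $QPQ$, is correct, but the device you offer for bounding that diagonal does not deliver it: the identity $Q^2=H+V$ and $\|Qu\|^2=((H+V)u,u)$ on $\Ran P$ only give an $\sL^2$ \emph{operator} bound for $QP$ (essentially proposition~\ref{prop-1-6}(ii) with $k=0$), and no operator-norm information yields a pointwise diagonal bound. What is actually needed is $\sup_x\|Q_xe(x,\cdot,\tau)\|_{\sL^2}\le Ch^{-3/2}$, which is literally the $\sL^2\to\sC$ bound for $QP$; the factorization $QPQ=(PQ)^*(PQ)$ brings you straight back to this same quantity, and ``imitating the Sobolev step with $QPQ$ in place of $P$'' would require two controlled semiclassical derivatives of $QPf$ --- exactly what you correctly observe is unavailable. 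So the argument is circular at the point where it matters. This is also where you diverge from the paper, which takes the $\sL^2\to\sC$ mapping bounds for \emph{both} $P$ and $QP$ (together with their adjoints) from proposition~\ref{prop-1-6} and reads off both diagonal estimates from the kernel composition $P=P\cdot P$, $QP=(QP)\cdot P$; your scheme never establishes the second mapping bound by other means. Two further points: the hypotheses of the corollary are only (\ref{1-19})--(\ref{1-20}) with $V\in\sL^{\frac52}\cap\sL^4$, so ``$V$ bounded on the relevant set'' and ``classically forbidden region'' cannot be invoked, and $A$ is not known to be bounded at this stage. And the averaged fallback $\int\psi\,|(Q_xe)(x,x,\tau)|\,dx\le Ch^{-3}\int\psi$ is not ``all that is needed downstream'': corollary~\ref{cor-1-8} uses the pointwise bound $|\Phi|\le Ch^{-3}$ from (\ref{1-13}), i.e.\ $\|\Delta A\|_\infty\le C\kappa h^{-1}$, to obtain (\ref{1-25}) by elliptic regularity, and a bound on unit-ball averages of $|\Delta A|$ is far too weak for that.
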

\begin{proof}
Due to proposition \ref{prop-1-6} operator norms from $\sL^2$ to $\sC$  of both $\uptheta (\tau -H)$ and $\bigl((hD-A)\cdot \boldsigma )\uptheta (\tau -H)$ do not exceed $C$ and the same is true for an adjoint operator which imply both claims.
\end{proof}

\begin{corollary}\label{cor-1-8}
Let \textup{(\ref{1-19})} and \textup{(\ref{1-20})} be fulfilled and $A$ be a minimizer. Then for arbitrarily small exponent $\delta>0$
\begin{gather}
\|\partial A\|_{\sC^{1-\delta}} \le C\kappa h^{-1},\label{1-25}\\
\|\partial  A\|_{\infty}\le h^{-\frac{4}{5}-\delta}\label{1-26}
\end{gather}
where $\sC^\theta$ is the scale of H\"older spaces and $\delta >0$ is arbitrarily small.
\end{corollary}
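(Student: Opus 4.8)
The goal is to bound the H\"older norm and sup-norm of $\partial A$ using the elliptic equation \textup{(\ref{1-13})} together with the pointwise bounds on $e(x,x,\tau)$ and $\bigl((hD-A)\cdot\boldsigma\bigr)e(x,y,\tau)|_{x=y}$ from corollary \ref{cor-1-7}. The plan is to view \textup{(\ref{1-13})} as $\Delta A_j = \tfrac{\kappa h^2}{2}\Phi_j$ and apply Schauder-type elliptic regularity: if $\Phi_j\in\sL^\infty$ then $\partial A_j$ gains almost a full derivative, landing in $\sC^{1-\delta}$ for every $\delta>0$ (one cannot reach $\sC^1$ because $\sL^\infty$-data only gives $\sC^{0,1-\delta}$ control on second derivatives of the potential, equivalently almost-Lipschitz; this is the standard loss and explains the $\delta$). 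So the first step is to get $\|\Phi\|_\infty\le Ch^{-3}$, which is exactly the content of corollary \ref{cor-1-7}: $\Phi_j$ is a finite sum of terms of the form $(hD_k-A_k)e(x,y,\tau)|_{y=x}$ contracted with Pauli matrices, each bounded by $Ch^{-3}$ pointwise.

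From $\|\Phi\|_\infty\le Ch^{-3}$ and $\Delta A_j=\tfrac{\kappa h^2}{2}\Phi_j$ we get $\|\Delta A_j\|_\infty\le C\kappa h^{-1}$. Then interior Schauder estimates (applied on unit balls, using also that $\partial A\in\sL^2$ with $\|\partial A\|\le C(\kappa M)^{1/2}h$ from \textup{(\ref{1-21})}-type bounds, to control the lower-order/normalization constant, and recalling by proposition \ref{prop-1-5} that $\varsigma\le c$ keeps $\kappa M h^{3/2}$ bounded so these norms are under control) yield $\|\partial A_j\|_{\sC^{1-\delta}}\le C\|\Delta A_j\|_\infty + C\|\partial A_j\| \le C\kappa h^{-1}$, which is \textup{(\ref{1-25})}. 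Strictly speaking one should localize: cover $\bR^3$ by unit cubes, apply the estimate on each (the equation is $\sH^1_0(B(0,1))$-constrained, so $A$ is supported in the unit ball and there is no issue at infinity), and note the $\sC^{1-\delta}$ bound is uniform.

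For \textup{(\ref{1-26})}, the bound \textup{(\ref{1-25})} alone gives $\|\partial A\|_\infty\le C\kappa h^{-1}$, which is worse than $h^{-4/5-\delta}$ when $\kappa$ is large. So here one must interpolate: the $\sL^2$ bound $\|\partial A\|_2\le C(\kappa M)^{1/2}h$ combined with the $\sC^{1-\delta}$ bound \textup{(\ref{1-25})} via the Gagliardo–Nirenberg/interpolation inequality $\|f\|_\infty\le C\|f\|_2^{\theta}\|f\|_{\sC^{1-\delta}}^{1-\theta}$ in $\bR^3$ (with $\theta$ determined by scaling: $0 = \theta\cdot(-3/2) + (1-\theta)(1-\delta)$, so $\theta=\tfrac{2(1-\delta)}{5-2\delta}$, i.e. $\theta\to 2/5$ as $\delta\to0$). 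This gives $\|\partial A\|_\infty\le C\bigl((\kappa M)^{1/2}h\bigr)^{2/5}(\kappa h^{-1})^{3/5}$ up to $\delta$-losses $=C\kappa^{4/5}M^{1/5}h^{2/5}h^{-3/5} = C(\kappa M h^{3/2})^{1/5}\kappa^{3/5}h^{-4/5+\cdots}$; using $\varsigma=\kappa M h^{3/2}\le c$ from \textup{(\ref{1-20})} and $M\ge Ch^{-1}$, hence $\kappa\le c/(Mh^{3/2})\le ch^{-1/2}$, one checks this is bounded by $h^{-4/5-\delta}$. The main obstacle is bookkeeping the interpolation exponents and the dependence on $\kappa,M,h$ so that the constraint $\varsigma\le c$ is exactly what is needed; the elliptic regularity input itself is entirely standard.
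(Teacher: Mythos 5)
Your proposal is correct and follows essentially the paper's own route: (\ref{1-25}) is obtained exactly as in the paper (bound $\|\Delta A\|_\infty\le C\kappa h^{-1}$ via (\ref{1-13}) and corollary~\ref{cor-1-7}, then standard elliptic regularity together with the $\sL^2$ bound on $\partial A$), and (\ref{1-26}) by playing the $\sL^2$ bound against (\ref{1-25}) --- the paper does this through the explicit ``bump'' argument (if $|\partial A(y)|\gtrsim\mu$ then $|\partial A|\gtrsim\mu$ on a ball of radius $\sim\epsilon(\mu h\kappa^{-1})^{1-\delta}$, so $\mu^2(\mu h\kappa^{-1})^{3(1-\delta)}\le C\kappa M h^2$), which is precisely the proof of the interpolation inequality you invoke, so the two arguments coincide up to packaging. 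One small arithmetic slip: $\kappa^{4/5}M^{1/5}h^{-1/5}=(\kappa Mh^{3/2})^{1/5}\kappa^{3/5}h^{-1/2}$, not $(\kappa Mh^{3/2})^{1/5}\kappa^{3/5}h^{-4/5}$; with $\varsigma\le c$ and $\kappa\le ch^{-1/2}$ this correctly yields $Ch^{-4/5}$ modulo $\delta$-losses, so your conclusion stands unchanged.
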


\begin{proof}
Really, due to (\ref{1-13})  for a minimizer 
$\|\Delta A\|_{\infty}\le C\kappa h^{-1}$. Also we know that 
$\|\partial A\| \le C(\kappa Mh^2)^{\frac{1}{2}}\le Ch^{\frac{1}{2}}$ due to (\ref{1-20}). Then (\ref{1-25}) holds due to the standard properties of the elliptic equations\footnote{\label{foot-2} Actually we can slightly improve this statement.}.

Therefore if at some point $y$ we have $|\partial A(y)|\gtrsim \mu$, it is true in its $\epsilon  (\mu h \kappa ^{-1} )^{1-\delta}$-vicinity (provided $\mu \le \kappa h^{-1}$) and then  
\begin{gather*}
\|\partial A\|^2 \gtrsim 
\mu ^2 (\mu h \kappa^{-1})^{3(1-\delta)}\\
\intertext{and  we conclude that} 
\mu ^2 (\mu h \kappa^{-1})^{3(1-\delta)} \le C\kappa h^2 M \iff 
\mu^{5-3\delta} \le C\kappa^{4-3\delta}h^{-1+3\delta}M
\end{gather*}
and one can see easily that (\ref{1-26}) holds due to (\ref{1-20}) and $h^{-1}\le M\le h^{-3}$.  

On the other hand, if $\mu \ge \kappa h^{-1}$ then we need to take $\epsilon$-vicinity and then $\mu^2 \le C\kappa Mh^2 \le C h^{\frac{1}{2}}$ where we used (\ref{1-20}) again. Then (\ref{1-26}) is proven.
\end{proof}

\begin{remark}\label{rem-1-9}
\begin{enumerate}[label=(\roman*), fullwidth]
\item It is not clear if it is possible to generalize this theory to arbitrary $d\ge 2$ with magnetic field energy is given by
\begin{equation}
\frac{1}{\kappa h^{d-1}}\int \bigl(|\partial A|^2-|\nabla\cdot A|^2\bigr) \,dx 
\label{1-27}
\end{equation}
Surely one should use generalized Pauli matrices $\upsigma_j$ in the definition of the operator. Especially problematic are $d\ge 5$.
\item Therefore while arguments  of section~\ref{sect-2} below remain valid for $d\ne 3$, so far they remain conditional (if a minimizer exists and satisfies some crude estimates).
\end{enumerate}
\end{remark}

\chapter{Microlocal analysis unleashed}
\label{sect-2}

\section{Sharp estimates}
\label{sect-2-1}

Now we can unleash the full power of microlocal analysis but we need to extend it to our framework. It follows by induction from (\ref{1-25})--(\ref{1-26}) and the arguments we used to derive these estimates that 
\begin{equation}
\|\partial A\|_{\sC^{n-\delta}} \le C_n\kappa h^{-1-n},
\label{2-1}
\end{equation}
so $A$ is ``smooth'' in $\varepsilon = h$ scale while for rough microlocal analysis as in \cite{bronstein:ivrii:IRO1} and section~\ref{book_new-sect-2-3} of \cite{futurebook} one needs $\varepsilon = Ch|\log h|$ at least. We consider in this section arbitrary $d\ge 2$; see however remark~\ref{rem-1-9}.

Note that 
\begin{claim}\label{2-2}
For a commutator of a pseudo-differential operator with a smooth symbol and $\sC^{\theta+1}$ function $A$ a usual commutator formula holds modulo 
$O( h^{\theta+1}\|\partial A\|_{\sC^{\theta}})$ for any non-integer $\theta >0$.
\end{claim}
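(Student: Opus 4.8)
The plan is to reduce the claim \textup{(\ref{2-2})} to the standard pseudo-differential calculus by a dyadic (Littlewood--Paley) decomposition of $A$ in frequency, treating each piece as a genuine smooth symbol and summing the resulting errors. Concretely, write $A=\sum_{\nu\ge 0} A_\nu$ where $A_\nu$ is the frequency-localized piece supported in $\{|\xi|\sim 2^\nu\}$ (with $A_0$ the low-frequency part), so that $\|A_\nu\|_{\sC^k}\le C 2^{\nu(k-\theta)}\|A\|_{\sC^\theta}$ for every $k$, by the Bernstein inequalities together with the Hölder characterization of $\sC^\theta$. The point of such a decomposition is that each $A_\nu$ is smooth, so for it the exact commutator expansion of the pseudo-differential calculus applies with no loss, and the remainder after $N$ terms is controlled by $h^{N}$ times a seminorm of $A_\nu$ of order $N$; one then optimizes $N$ against $\nu$.

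The key steps, in order, are as follows. First, fix the smooth symbol $a$ of the given pseudo-differential operator $\Op(a)$ and recall that for smooth $A_\nu$ one has $[\Op(a),A_\nu]=\sum_{1\le|\alpha|<N} \tfrac{h^{|\alpha|}}{\alpha!} \Op(\partial_\xi^\alpha a)\, D_x^\alpha A_\nu + h^{N} R_{N,\nu}$, with $\|R_{N,\nu}\|_{\sL^2\to\sL^2}\le C_N \|A_\nu\|_{\sC^{N}}\le C_N 2^{\nu(N-\theta)}\|A\|_{\sC^\theta}$. Second, sum over $\nu$: the main term reassembles the usual commutator formula applied to $A$ itself (the first term $h\,\Op(\partial_\xi a)\,D_xA$ plus the finitely many lower-order terms, all of which converge since $\theta>0$ and the relevant derivatives of $A$ land in $\sC^{\theta-1}\subset\sL^\infty$ up to the last one, where one stops the genuine expansion at order $\lfloor\theta\rfloor+1$). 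Third, and this is the heart of the estimate, bound the error: for the high-frequency range $2^\nu\ge h^{-1}$ take $N$ just above $\theta$, giving $\sum_{2^\nu\ge h^{-1}} h^{N} 2^{\nu(N-\theta)}\lesssim h^{\theta+1}\|A\|_{\sC^\theta}$ (here one must choose $N$ non-integer-adjacent so that $N-\theta\in(0,1)$ near $\theta$, which is why the non-integrality of $\theta$ enters), while for the low-frequency range $2^\nu\le h^{-1}$ one either absorbs $A_\nu$ into the symbol directly (it is smooth with bounded derivatives on that scale) or runs the expansion to higher order $N'$, and the geometric sum again produces $O(h^{\theta+1}\|A\|_{\sC^\theta})$.

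The main obstacle I expect is the bookkeeping at the boundary between ``keeping'' terms in the genuine expansion and dumping them into the remainder: one must verify that the terms $h^{|\alpha|}\Op(\partial_\xi^\alpha a)D_x^\alpha A$ with $|\alpha|\le\lfloor\theta\rfloor$ are honestly defined (their coefficients $D_x^\alpha A\in\sC^{\theta-|\alpha|}$, which is a legitimate Hölder space since $\theta\notin\bZ$) and that the first discarded order $|\alpha|=\lfloor\theta\rfloor+1$ is exactly the one whose summed contribution is $O(h^{\theta+1}\|\partial A\|_{\sC^\theta})$ — matching the stated remainder, since $\|\partial A\|_{\sC^{\theta}}$ is comparable to $\|A\|_{\sC^{\theta+1}}$ and the relevant seminorm is $\|D_x^{\lfloor\theta\rfloor+1}A\|_{\sC^{\theta-\lfloor\theta\rfloor}}\sim\|\partial A\|_{\sC^{\theta}}$ up to constants. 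A secondary technical point is the uniformity of the constants $C_N$ in the pseudo-differential remainder estimate, which is standard (Calderón--Vaillancourt type bounds applied to $\partial_\xi^N a$) but should be cited rather than reproved; with $A$ here being the minimizer, estimate \textup{(\ref{2-1})} guarantees the requisite finiteness of all the Hölder seminorms appearing, so the decomposition is legitimate in our setting.
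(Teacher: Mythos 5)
Your overall strategy --- dyadic decomposition of $A$, exact pseudo-differential calculus on each smooth piece, summation of the errors --- is a legitimate route; note that the paper itself gives no proof of (\ref{2-2}) at all, stating it as a known fact of rough pseudo-differential calculus (the mollification-on-scale-$\varepsilon$ version of this very argument, in the spirit of the cited Bronstein--Ivrii paper), so a complete argument along your lines would be acceptable. But the step you yourself call the heart of the estimate fails as written. With your normalization $\|A_\nu\|_{\sC^N}\le C2^{\nu(N-\theta)}\|A\|_{\sC^\theta}$ and $N$ chosen just above $\theta$, the exponent $N-\theta$ is positive, so the series $\sum_{2^\nu\ge h^{-1}}h^{N}2^{\nu(N-\theta)}$ diverges: there is no upper frequency cutoff and the terms grow geometrically in $\nu$. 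The roles of the two ranges are reversed. It is on the low-frequency range $2^\nu\le h^{-1}$ that one expands to high order $N$, so that the geometric sum is dominated by its top scale $2^\nu\asymp h^{-1}$; on the high-frequency range one must not expand at all, but instead bound $\|[\Op(a),A_\nu]\|\le 2\|\Op(a)\|\,\|A_\nu\|_{\sL^\infty}\lesssim 2^{-\nu(\theta+1)}\|\partial A\|_{\sC^\theta}$ and estimate the finitely many retained terms $h^{|\alpha|}\Op(\partial_\xi^\alpha a)D^\alpha A_\nu$, $|\alpha|\le\lfloor\theta\rfloor+1$, directly by $h^{|\alpha|}2^{\nu(|\alpha|-\theta-1)}\|\partial A\|_{\sC^\theta}$; both sums over $2^\nu\ge h^{-1}$ then converge and give $O(h^{\theta+1}\|\partial A\|_{\sC^\theta})$.

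There is also a normalization slip that costs exactly the missing power of $h$: you run Bernstein for $A$ as a $\sC^{\theta}$ function, whereas the hypothesis is $A\in\sC^{\theta+1}$ and the pieces satisfy $\|A_\nu\|_{\sC^k}\lesssim 2^{\nu(k-\theta-1)}\|\partial A\|_{\sC^{\theta}}$ (the commutator annihilates the constant part of $A$, so the seminorm suffices, as you observe at the end). With your exponents even the corrected low-frequency sum yields only $h^{\theta}$; with the $\sC^{\theta+1}$ exponents and $N>\theta+1$ it yields $h^{N}\cdot h^{-(N-\theta-1)}=h^{\theta+1}$ as required. The non-integrality of $\theta$ enters exactly where you say --- $\lfloor\theta\rfloor+1<\theta+1$ strictly, and the coefficient spaces $\sC^{\theta+1-|\alpha|}$ are genuine H\"older classes --- and (\ref{2-1}) does guarantee finiteness of the relevant seminorms for the minimizer. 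So the architecture is sound and close in spirit to what the paper implicitly invokes, but the two-range summation must be redone as above before your proposal constitutes a proof.
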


\begin{proposition}\label{prop-2-1}
Assume that 
\begin{gather}
\mu \Def \|\partial  A\|_{\infty, B(x,1)} \le C_0\label{2-3},\\
\shortintertext{and in in $B(x,1)$}
|V|\ge \epsilon_0.\label{2-4}
\end{gather}
Then for $|\alpha|\le 2$, $|\beta|\le 2$, $\theta >1$
\begin{multline}
|F_{t\to h^{-1}\tau} \chi_T(t) 
\bigl((hD_x)^\alpha (hD_y)^\beta U(x,y,t)\bigr)\bigr|_{x=y}|\le \\
Ch^{1-d+s} T^{-s}+ C h^{-d+\theta}T^2  \|\partial  A\|_{\sC^{\theta}}
\label{2-5}
\end{multline}
where $\chi \in \sC^\infty ([-1,-\frac{1}{2}]\cup [\frac{1}{2},1])$, $\chi_T(t)=\chi(t/T)$, $Ch \le T\le \epsilon $, $|\tau|\le \epsilon$ and $U(x,y,t)$ is the Schwartz kernel of $e^{ih^{-1}tA}$.
\end{proposition}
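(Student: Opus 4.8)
The plan is to prove this by the standard finite-propagation-speed and successive-approximation argument for the wave equation $hD_t U = \bigl((hD-A)\cdot\boldsigma\bigr)^2 U$ (note $U$ is the propagator for $H+V$-type operator with $V$ absorbed or, as stated, for $e^{ih^{-1}tA}$ with $A$ the operator $H$; I read the last line's ``$e^{ih^{-1}tA}$'' as a typo for the propagator $e^{ih^{-1}tH}$), but adapted to a symbol $A$ that is only $\sC^{n-\delta}$-smooth on the $\varepsilon=h$ scale rather than genuinely smooth. First I would localize: pick $\psi\in\sC_0^\infty(B(x,1))$ equal to $1$ near $x$ and work with $\psi U\psi$; since $|V|\ge\epsilon_0$ in $B(x,1)$ and $|\tau|\le\epsilon$, the operator $H-\tau$ is \emph{elliptic} there (the symbol $|\xi-A|^2 - V - \tau$ is bounded below by $\epsilon_0/2$ once $|\tau|,\epsilon$ are small), so there are no real characteristics in the relevant energy range. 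This ellipticity is what makes the first term $Ch^{1-d+s}T^{-s}$ possible: away from the characteristic variety, repeated integration by parts in $t$ (equivalently, applying powers of the elliptic parametrix of $H-\tau$) against $\chi_T(t)$ gains a factor $h/T$ each time, at the cost of differentiating $\chi_T$, which produces the $T^{-s}$.

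Next I would set up the approximation $U\approx U^0 + U^1 + \dots$ where $U^0$ is the propagator built from a genuinely smooth symbol $A^{(h)}$ obtained by mollifying $A$ at scale $\varepsilon=h$ (or simply freezing/Taylor-expanding $A$), and the correction terms carry the error $A - A^{(h)}$ and the commutator errors. By Corollary~\ref{cor-1-8} and estimate (\ref{2-1}) we have $\|\partial A\|_{\sC^{n-\delta}}\le C_n\kappa h^{-1-n}$, so $A$ behaves like a symbol of the right class once we pay $h^\theta\|\partial A\|_{\sC^\theta}$ per commutator, exactly as quantified in Claim (\ref{2-2}). For the smooth model $U^0$ the standard stationary-phase / Fourier integral operator bound gives $|F_{t\to h^{-1}\tau}\chi_T(t)(hD_x)^\alpha(hD_y)^\beta U^0|_{x=y}|\le Ch^{1-d+s}T^{-s}$ for all $s$, using ellipticity so that there is no contribution from $t$ near $0$ of the ``diagonal singularity'' type either — more precisely, for $Ch\le T\le\epsilon$ and $|\alpha|,|\beta|\le 2$ one controls $(hD)^\alpha(hD)^\beta$ using Proposition~\ref{prop-1-6}-type bounds on $(hD)^k\uptheta(\tau-H)$ to absorb the derivatives. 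Then the difference $U-U^0$ solves an inhomogeneous wave equation whose right-hand side is $O(\|A-A^{(h)}\|_{\infty} + \text{commutator errors})$, and Duhamel's formula plus the same $\chi_T$-integration converts a source of size $h^{-\text{something}}\|\partial A\|_{\sC^\theta}$ into the second term $Ch^{-d+\theta}T^2\|\partial A\|_{\sC^\theta}$; the two factors of $T$ come from one time-integration in Duhamel and one more from the $\chi_T$ support (of length $\sim T$).

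The bookkeeping I would do carefully: track how the $(hD_x)^\alpha(hD_y)^\beta$ derivatives interact with the Duhamel integral and with the mollification error. Differentiating $U$ twice in $x$ and twice in $y$ costs at most $h^{-2}\cdot h^{-2}$ naively, but because we are evaluating $F_{t\to h^{-1}\tau}\chi_T(t)(\cdots)|_{x=y}$ and using the bounds of Proposition~\ref{prop-1-6} (operator norms of $(hD)^k\uptheta(\tau-H)$ bounded by $C$, hence of $(hD)^k e(x,y,\tau)$ by $Ch^{-d}$ via the $\sL^2\to\sC$ estimates of Corollary~\ref{cor-1-7}), these derivatives only cost the expected powers of $h$ already built into the $h^{1-d}$ and $h^{-d}$ prefactors. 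The error term from $A$ being rough enters only through commutators and through $\|A-A^{(h)}\|$, each of which is $O(h^\theta\|\partial A\|_{\sC^\theta})$ by Claim (\ref{2-2}) and Taylor's theorem; combined with the $h^{1-d}T^{-s}$ and $h^{-d}T^2$ weights this yields exactly (\ref{2-5}).

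The main obstacle I expect is making the rough-symbol FIO construction rigorous on the $\varepsilon=h$ scale: ordinarily rough microlocal analysis (as in the cited \cite{bronstein:ivrii:IRO1} and section~\ref{book_new-sect-2-3} of \cite{futurebook}) needs $\varepsilon\ge Ch|\log h|$, whereas here the estimate (\ref{2-1}) gives smoothness only at scale $\varepsilon=h$ with the $h^{-1-n}$ loss. The resolution is that we do not need a full long-time parametrix — the restriction $T\le\epsilon$ and the \emph{strong ellipticity} from $|V|\ge\epsilon_0$ mean that a finite (indeed, $O(1)$) number of successive-approximation / commutator iterations suffices, with each iteration controlled by Claim (\ref{2-2}); the logarithmic loss that plagues the general theory is avoided because we never iterate enough times for it to accumulate. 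Getting the exponent $\theta$ (any $\theta>1$) to come out sharp, and checking that $T^2$ rather than $T$ or $T^{-s}$-type weights appear on the error term, is the delicate point and requires paying attention to the two separate roles of $T$ in the Duhamel formula and in the support of $\chi_T$.
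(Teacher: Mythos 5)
There is a genuine gap, and it sits at the heart of your argument: you read the hypothesis $|V|\ge \epsilon_0$ on $B(x,1)$ as \emph{ellipticity} of $H-\tau$. That is only true on the half of the dichotomy where $V\le -\epsilon_0$ (the classically forbidden region), since then $|\xi-A|^2-V-\tau\ge \epsilon_0-\epsilon>0$. On the other half, $V\ge\epsilon_0$, the symbol $|\xi-A|^2-V-\tau$ vanishes on the nonempty sphere $|\xi-A|^2=V+\tau$, so $H-\tau$ is \emph{not} elliptic and your "repeated integration by parts in $t$ / elliptic parametrix" mechanism for producing the main term $Ch^{1-d+s}T^{-s}$ fails exactly where the estimate is interesting. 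What the hypothesis actually buys in that region is non-degeneracy of the Hamiltonian flow: on the energy surface $|\xi-A|\ge(\epsilon_0-\epsilon)^{1/2}$, so the bicharacteristics leave the diagonal at a speed bounded from below. The paper's proof is built on precisely this: it first establishes finite propagation speed with respect to $x$ (its (\ref{2-6})), then with respect to $\xi$ (its (\ref{2-7}), where the commutator error $Ch^{\theta}\|\partial A\|_{\sC^\theta}$ from the rough $A$ first appears, in the spirit of your Claim (\ref{2-2})), and then combines the two to show that under (\ref{2-4}) the singularity actually propagates off the diagonal (its (\ref{2-8})), which is what kills the contribution of $|t|\asymp T$ for $T\asymp 1$; the case $Ch\le T\le\epsilon$ is then handled by the rescaling $t\mapsto t/T$, $x\mapsto x/T$, $h\mapsto h/T$ with homogenized H\"older norms, which is also where the powers of $T$ in both terms of (\ref{2-5}) come from. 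Your proposal contains no propagation-of-singularities step at all, so the main term is unproven in the classically allowed case.

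Your secondary device --- Duhamel against the mollified operator $H_{A_\varepsilon,V}$ with the roughness entering only through $\|A-A_\varepsilon\|$ and commutators --- is a legitimate alternative organization of the error term (it is essentially what the paper itself does later, in subsection~\ref{sect-2-2} and in the proof of (\ref{2-25})), and your accounting of the two factors of $T$ (one from the Duhamel time integral, one from the length of $\supp\chi_T$) matches how the analogous bound (\ref{2-35}) is obtained. But this machinery only controls $U-U^0$; it cannot substitute for the propagation argument needed for $U^0$ itself on the characteristic set. To repair the proof you must replace the ellipticity claim by the correct dichotomy and supply the propagation estimate (finite speed from below) in the region $V\ge\epsilon_0$.
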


\begin{proof}
Consider first $T\asymp 1$. First, using standard propagation arguments as in \cite{bronstein:ivrii:IRO1}  and section~\ref{book_new-sect-2-3} of \cite{futurebook} one can prove  the finite propagation speed with respect to $x$, namely that
\begin{equation}
|F_{t\to h^{-1}\tau} \chi_T(t) (hD_x)^\alpha(hD_y)^\beta U| \le C h^s
\qquad \text{as\ \ } |x-y|\ge C_0T ,\; \tau\le c_0
\label{2-6}
\end{equation}
with an arbitrarily large exponent $s$. Further, using (\ref{2-6}) we prove  the finite propagation speed with respect to $\xi$: 
\begin{multline}
|F_{t\to h^{-1}\tau} \chi_T(t) (hD_x)^\alpha(hD_y)^\beta 
\varphi_1 (hD_x)\varphi_2 (hD_y) U| \le 
C h^s+ Ch^{\theta}\|\partial A\|_{\sC^\theta}\\[3pt]
 \text{as\ \ }  \varphi_1,\varphi_2  \in \sC_0^\infty, \ 
\dist(\supp \varphi_1, \supp \varphi_1)\ge C_0T,\  \tau\le c_0
\label{2-7}
\end{multline}
where the last term in the right-hand expression is due to the non-smoothness of $A$.

Furthermore, using (\ref{2-6}) and (\ref{2-7}) we prove that as (\ref{2-4}) is fulfilled there is a propagation:
\begin{multline}
|F_{t\to h^{-1}\tau} \chi_T(t) (hD_x)^\alpha(hD_y)^\beta U| \le 
C h^s+ Ch^{\theta}\|\partial A\|_{\sC^\theta}\\[3pt]
 \text{as\ \ } |x-y|\le \epsilon_0 T ,\; |\tau|\le \epsilon, 
T\le \epsilon 
\label{2-8}
\end{multline} 
which implies (\ref{2-5}) under additional assumption $T\asymp 1$; here the last term in the right-hand expression is inherited from (\ref{2-7}). 

Finally, for $Ch \le T \le \epsilon$ we use rescaling $t\mapsto t/T$, $x\mapsto x/T$, $h\mapsto h/T$ and we use ``homogenized'' $\sC^\theta$ norms.  Detailed proof for   will be published later. 
\end{proof}

\begin{corollary}\label{cor-2-2}
In the framework of proposition~\ref{prop-2-1} as 
$|\alpha|\le 2$,  $|\beta|\le 2$
\begin{multline}
|F_{t\to h^{-1}\tau} \bigl[\bar{\chi}_T(t) 
\bigl((hD_x)^\alpha (hD_y)^\beta U(x,y,t)\bigr)\bigr]\bigr|_{x=y}|\le \\
Ch^{1-d}+ C T^2 h^{-d+\theta}  \|\partial  A\|_{\sC^{\theta}}  
\label{2-9}
\end{multline}
where $\bar{\chi}\in \sC^\infty ([-1,1])$ and
\begin{multline}
|\bigl[\bigl((hD_x -A(x))\cdot\boldsigma\bigr)^\alpha 
\bigl((hD_y-A(y))\cdot\boldsigma\bigr)^\beta e(x,y,t)\bigr]\bigr|_{x=y} -\\[3pt]
\Weyl_{\alpha,\beta} (x)|\le  
Ch^{1-d}+ C h^{-d+\frac{1}{2}(\theta+1)}  
\|\partial  A\|_{\sC^{\theta}}  ^{\frac{1}{2}}
\label{2-10}
\end{multline}
where 
\begin{gather}
\Weyl_{\alpha,\beta}(x) = \const \, h^{-d} 
\int_{\{H(x,\xi) \le \tau\}}
 \bigl((\xi -A(x))\cdot\boldsigma \bigr)^{\alpha+\beta}\, d\xi
\label{2-11}\\
\intertext{is the corresponding Weyl expression and}
H(x,\xi)= \bigl(\xi -A(x)\cdot \boldsigma \bigr)^2-V(x);
\label{2-12}
\end{gather}
in particular Weyl  is $0$ as $|\alpha|+|\beta|=1$.
\end{corollary}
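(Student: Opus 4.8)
The plan is to deduce the estimate (\ref{2-9}) from Proposition~\ref{prop-2-1} by a dyadic decomposition of the time cut-off, and then to derive the pointwise spectral asymptotics (\ref{2-10}) from (\ref{2-9}) by the Tauberian method together with the standard successive-approximation construction of the parametrix, optimising over the length of the Tauberian time interval.

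To prove (\ref{2-9}) I would write $\bar\chi_T=\bar\chi_{T_0}+\sum_{T_0\le T'\le T}\chi_{T'}$, where $T_0\asymp C_0h$, the sum runs over a dyadic sequence of scales, and $\chi_{T'}(t)=\chi(t/T')$ with $\chi$ supported in $\{\tfrac12\le|t|\le1\}$ as in Proposition~\ref{prop-2-1}. Estimate (\ref{2-5}) applied to each $\chi_{T'}$ and summed over the dyadic scales contributes $Ch^{1-d+s}T_0^{-s}=C_sh^{1-d}$ (the geometric series is dominated by the smallest scale, and $s$ is taken large) together with $Ch^{-d+\theta}T^2\|\partial A\|_{\sC^\theta}$ (dominated by the largest scale). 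For the remaining short-time term one uses that $F_{t\to h^{-1}\tau}\bigl(\bar\chi_{T_0}(t)(hD_x)^\alpha(hD_y)^\beta U(x,y,t)\bigr)|_{x=y}$ is, up to a constant factor, $h$ times the mollification at scale $\asymp1$ in $\tau$ of $d_\tau$ of the diagonal value of $(hD_x)^\alpha(hD_y)^\beta e(x,y,\tau)$; since the a priori bounds (Proposition~\ref{prop-1-6}, Corollary~\ref{cor-1-7} and elliptic regularity, taken as standing assumptions when $d\ne3$) bound that diagonal value by $Ch^{-d}$ for $\tau\le c$, this term is $O(h^{1-d})$. Adding up gives (\ref{2-9}).

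For (\ref{2-10}) I would first note that, expanding $\bigl((hD_x-A(x))\cdot\boldsigma\bigr)^\alpha$ and $\bigl((hD_y-A(y))\cdot\boldsigma\bigr)^\beta$ into $hD$-monomials of degrees $\le|\alpha|$ and $\le|\beta|$ with coefficients bounded in $B(x,1)$ (using (\ref{2-3}), which also controls $A$ on $B(x,1)$), estimate (\ref{2-9}) continues to hold with $(hD_x)^\alpha(hD_y)^\beta$ replaced by these operators. Denoting by $\sigma(t)$ the resulting expression applied to $U$ and restricted to the diagonal, the Tauberian theorem (section~\ref{book_new-sect-2-3} of \cite{futurebook}), applied on the range $Ch\le\bar T\le\epsilon$ with the bound just obtained for $F_{t\to h^{-1}\tau}(\bar\chi_{\bar T}\sigma)$, shows that the left-hand side of (\ref{2-10}) differs from the Tauberian principal part built from $\bar\chi_{\bar T}$ by at most $C\bar T^{-1}$ times the right-hand side of (\ref{2-9}) evaluated at $T=\bar T$, that is, by $C\bar T^{-1}h^{1-d}+C\bar Th^{-d+\theta}\|\partial A\|_{\sC^\theta}$. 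The principal part is then evaluated by the usual successive-approximation parametrix for $e^{ih^{-1}tH}$ on $|\tau|\le\epsilon$ — legitimate because (\ref{2-4}) makes the energy surface non-degenerate — and equals $\Weyl_{\alpha,\beta}(x)$ modulo $O(h^{1-d})$ plus a non-smoothness error which, by the commutator formula (\ref{2-2}), is again $O(\bar T^2h^{-d+\theta}\|\partial A\|_{\sC^\theta})$. Optimising over $\bar T\in[Ch,\epsilon]$ — balancing $\bar T^{-1}h^{1-d}$ against $\bar Th^{-d+\theta}\|\partial A\|_{\sC^\theta}$, which gives $\bar T\asymp h^{(1-\theta)/2}\|\partial A\|_{\sC^\theta}^{-1/2}$ when this lies in the admissible range and $\bar T=\epsilon$ otherwise — yields precisely the error $Ch^{1-d}+Ch^{-d+\frac12(\theta+1)}\|\partial A\|_{\sC^\theta}^{1/2}$ in (\ref{2-10}). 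Finally, $\Weyl_{\alpha,\beta}=0$ for $|\alpha|+|\beta|=1$ is immediate from (\ref{2-11}): the domain $\{H(x,\xi)\le\tau\}$ is a ball centred at $\xi=A(x)$ and $(\xi-A(x))\cdot\boldsigma$ is odd about that centre.

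The main obstacle is the parametrix step, since the symbol of $H$ is only $\sC^\theta$ in $x$ through $\partial A$: one has to run the successive-approximation scheme for $U$ on the range $Ch\le T\le\epsilon$ (after the rescaling $t\mapsto t/T$, $x\mapsto x/T$, $h\mapsto h/T$ and using the ``homogenised'' $\sC^\theta$ norms as at the end of the proof of Proposition~\ref{prop-2-1}) and track, via (\ref{2-2}), exactly how each approximation step degrades the remainder, so that the accumulated non-smoothness error stays $O(\bar T^2h^{-d+\theta}\|\partial A\|_{\sC^\theta})$ rather than something worse. By contrast the dyadic summation and the Tauberian estimate are routine.
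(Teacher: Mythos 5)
Your argument is correct and, in its overall structure, is essentially the paper's proof: (\ref{2-9}) is obtained by dyadic summation of (\ref{2-5}) over $C_0h\le|t|\le T$ plus a trivial $Ch^{1-d}$ bound on the contribution of $|t|\le C_0h$, and (\ref{2-10}) by the Tauberian theorem with the time parameter optimized to exactly the $T^*$ of (\ref{2-13}); your balancing of $\bar T^{-1}h^{1-d}$ against $\bar T h^{-d+\theta}\|\partial A\|_{\sC^\theta}$ and the oddness argument for $|\alpha|+|\beta|=1$ also match. The one place where you diverge --- and where you make the proof harder than it needs to be --- is the evaluation of the Tauberian principal part. You propose to compute the whole Tauberian expression on $|t|\le\bar T$ (with $\bar T$ possibly $\asymp 1$) by a successive-approximation parametrix, and you correctly flag this as the main obstacle: for times of order $1$ the perturbation series does not converge and one would need genuine long-time constructions. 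The paper sidesteps this by a second dyadic summation: the contribution of each interval $\{|t|\asymp T\}$ to the Tauberian expression is bounded by the right-hand side of (\ref{2-5}) divided by $T$, and summing over $T_*= h^{1-\delta}\le T\le T^*$ (with $s\delta>1$) shows that all of $\{T_*\le|t|\le T^*\}$ is already absorbed into the right-hand side of (\ref{2-10}). Hence the parametrix is needed only on $\{|t|\le T_*\}$, where successive approximations converge trivially and the non-smoothness error is controlled by (\ref{2-2}). With that reduction your ``main obstacle'' disappears, and the rest of your write-up stands as is.
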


\begin{proof}
Obviously summation of (\ref{2-5}) over $C_0h \le |t|\le T$ and a trivial estimate by $Ch^{1-d}$ of the contribution of the interval $|t|\le C_0 h$ implies (\ref{2-9}). 

Then the standard Tauberian arguments and (\ref{2-9}) imply that  (\ref{2-10}) would be correct if we used Tauberian expression with $T=T^*$ instead of $\Weyl$ where we pick up 
\begin{equation}
T^*= \epsilon \min \bigl(1, 
h^{-\frac{1}{2}(\theta-1)}\|\partial A\|_{\sC^\theta}^{-\frac{1}{2}}\bigr).
\label{2-13}
\end{equation}
Meanwhile the Tauberian formula and (\ref{2-5}) imply that the contribution of an interval $\{t: |t|\asymp T\}$ with $ h\le T\le T^*$ to the Tauberian expression does not exceed the right-hand expression of (\ref{2-5}) divided by $T$, i.e.
\begin{equation*}
Ch^{1-d+s}T^{-s-1} + C T h^{-d+\theta}  \|\partial  A\|_{\sC^{\theta}};
\end{equation*}
summation over $T_*\Def h^{1-\delta}\le T\le T^*$ results in  the right-hand expression of (\ref{2-10}). 

So, we need to calculate only the contribution of $\{t:|t|\le T_*\}$ but one can see easily that modulo indicated error  it coincides with $\Weyl_{\alpha,\beta}$.
\end{proof}

\begin{corollary}\label{cor-2-3}
\begin{enumerate}[label=(\roman*), fullwidth]
\item If assumption \textup{(\ref{2-3})} is replaced by 
\begin{equation}
\mu \Def \|\partial  A\|_{\infty}\le C h^{-1+\sigma}
\label{2-14}
\end{equation}
with $\sigma> 0$ then 
\begin{multline}
|\bigl[\bigl((hD_x -A(x))\cdot\boldsigma\bigr)^\alpha 
\bigl((hD_y-A(y))\cdot\boldsigma\bigr)^\beta e(x,y,t)\bigr]\bigr|_{x=y} -\\[3pt]
\Weyl_{\alpha,\beta} (x)|\le  
C\bar{\mu} h^{1-d}+ C \bar{\mu} ^{-\frac{1}{2}}h^{-d+\frac{1}{2}(\theta+1)}  \|\partial  A\|_{\sC^{\theta}}  ^{\frac{1}{2}}  
\label{2-15}
\end{multline}
and
\begin{equation}
T^*= \epsilon \min \bigl(\bar{\mu}^{-1}, 
\bar{\mu} h^{-\frac{1}{2}(\theta-1)}
\|\partial A\|_{\sC^\theta}^{-\frac{1}{2}}\bigr)
\label{2-16}
\end{equation}
with $\bar{\mu}=\max(\mu,1)$;
\item As $d\ge 3$ one can skip assumption \textup{(\ref{2-4})}.
\end{enumerate}
\end{corollary}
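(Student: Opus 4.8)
\emph{Overall plan.} Both parts are reduced to proposition~\ref{prop-2-1} and corollary~\ref{cor-2-2} by rescaling: for (i) a single rescaling in $x$, accompanied by matching rescalings of $h$ and $t$; for (ii) an additional rescaling adapted to the size of $V$, together with a dyadic partition near $\{V\approx 0\}$.

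\emph{Part (i).} Put $\bar\mu=\max(\mu,1)$ with $\mu$ as in \textup{(\ref{2-14})} and rescale $x\mapsto y=\bar\mu(x-\bar x)$, $h\mapsto h'=\bar\mu h$, $t\mapsto t'=\bar\mu t$, so that $h^{-1}t=(h')^{-1}t'$ is preserved. Then $H$ becomes an operator of the same form with $A'(y)=A(\bar x+\bar\mu^{-1}y)$, $V'(y)=V(\bar x+\bar\mu^{-1}y)$ and Planck constant $h'$; one has $\|\partial_yA'\|_{\infty,B(y,1)}=\bar\mu^{-1}\mu\le 1$, so \textup{(\ref{2-3})} holds for $A'$, \textup{(\ref{2-4})} is inherited, and $h'=\bar\mu h\le Ch^{\sigma}$ is still a genuine small parameter by \textup{(\ref{2-14})}. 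I would then apply corollary~\ref{cor-2-2} in the rescaled variables and undo the rescaling: $e(x,x,\tau)$ and $\Weyl_{\alpha,\beta}(x)$ are densities, so each carries a factor $\bar\mu^{d}$; the term $C(h')^{1-d}$ turns into $C\bar\mu h^{1-d}$; the non-smooth error becomes the second term of \textup{(\ref{2-15})} once $\|\partial_yA'\|_{\sC^{\theta}}$ is rewritten through $\|\partial_xA\|_{\sC^{\theta}}$ in the homogenized ($\varepsilon=h$) H\"older norms of section~\ref{sect-2-1}; and the Tauberian cutoff \textup{(\ref{2-13})}, transported back via $t=\bar\mu^{-1}t'$, becomes \textup{(\ref{2-16})}, whose slot $\epsilon\bar\mu^{-1}$ is the cyclotron (``loop'') time and whose other slot is the non-smoothness threshold.

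\emph{Part (ii).} Dropping \textup{(\ref{2-4})}, I would split $B(x,1)$ according to the size of $V$: where $|V|\ge\epsilon_0$ part (i) applies directly, and otherwise one works on dyadic shells $\{|V(z)|\asymp\rho^{2}\}$, $\rho$ running from $\sqrt{\epsilon_0}$ down to a threshold $\rho_*$ that is a small power of $h$. On such a shell, using $V\in\sC^{2,1}$ so that $|V|\asymp\rho^{2}$ is stable on $\rho$-balls, rescale $x\mapsto\rho^{-1}x$, $h\mapsto h/\rho^{2}$, $H\mapsto\rho^{-2}H$ to restore $|V|\asymp 1$ and hence \textup{(\ref{2-4})}, compose if needed with the rescaling of part (i), and invoke corollary~\ref{cor-2-2}. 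Summing the shell errors over $\rho\ge\rho_*$ and bounding the core $\{|V|\lesssim\rho_*^{2}\}$ --- of measure $O(\rho_*^{2})$ --- by the crude estimate $e(x,x,\tau)\le Ch^{-d}$ (the analogue of corollary~\ref{cor-1-7}) should reproduce \textup{(\ref{2-15})}. The role of $d\ge 3$ is precisely that the $d-2\ge 1$ momentum directions transverse to the magnetic plane smear the Landau-level structure near $V\approx 0$, keeping all these contributions within the $O(\bar\mu h^{1-d})$ budget, whereas for $d=2$ the Landau jumps (of size $\asymp\mu h^{-1}$) are already of that order and the scheme fails.

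\emph{Main obstacle.} The smooth/elliptic side of the rescalings is routine; the hard part is the non-smooth side --- tracking the behaviour of $\|\partial A\|_{\sC^{\theta}}$ under both rescalings in the homogenized norms of section~\ref{sect-2-1}, and verifying that the choices $h'=\bar\mu h$, $\rho_*$ and the dyadic summation in $\rho$ reconstruct exactly $C\bar\mu h^{1-d}+C\bar\mu^{-\frac12}h^{-d+\frac12(\theta+1)}\|\partial A\|_{\sC^{\theta}}^{\frac12}$ and the cutoff \textup{(\ref{2-16})}. I expect closing the $\rho$-summation, and with it the reduction that makes $d\ge 3$ exactly the borderline, to be the main difficulty.
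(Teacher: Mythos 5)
Your part (i) is exactly the paper's argument: the proof there is the one--line rescaling $x\mapsto \mu x$, $h\mapsto \hbar=h\mu$ (your $\bar\mu$), so that side is fine.

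Part (ii), however, has a genuine gap in the choice of scaling. You propose dyadic shells $\{|V|\asymp\rho^2\}$ with spatial scale $\rho=|V|^{1/2}$ and $\hbar=h/\rho^2$, claiming that ``$|V|\asymp\rho^2$ is stable on $\rho$-balls''. It is not: since only $|\nabla V|\le C$ is available, on a ball of radius $\rho$ the potential can move by $\asymp\rho\gg\rho^2$, so after dividing by $\rho^2$ the rescaled potential ranges over an interval of length $\asymp\rho^{-1}$ and neither \textup{(\ref{2-4})} nor any useful bound on it is restored; the whole reduction to corollary~\ref{cor-2-2} collapses at small $\rho$. The admissible scaling function is the one the paper uses, $\ell=\max(\epsilon|V|,h^{2/3})$ (so that $V$ varies by $O(\epsilon\ell)\ll\ell\asymp|V|$ on $\ell$-balls), with energy scale $\zeta=\ell^{1/2}$, i.e.\ $x\mapsto x\ell$, $h\mapsto\hbar=h\ell^{-3/2}$, $A\mapsto A\ell^{-1/2}$. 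Moreover, \textup{(\ref{2-15})} is a pointwise estimate at the given $x$, so no shell summation and no ``measure of the core'' argument is needed (the latter would anyway require an unassumed nondegeneracy of $V$ to control $\mes\{|V|\le\rho_*^2\}$): one simply rescales the single ball $B(x,\ell(x))$; where $\ell\gtrsim h^{2/3}$ corollary~\ref{cor-2-2} applies with $|V_{\text{rescaled}}|\asymp1$, and where $\ell\asymp h^{2/3}$ one has $\hbar\asymp1$ and the crude bound of corollary~\ref{cor-1-7} gives $e(x,x,\tau)=O(h^{-2d/3})$, which is $O(h^{1-d})$ precisely when $d\ge3$. That borderline comparison, not a ``smearing of Landau levels by transverse momenta'', is where the hypothesis $d\ge3$ enters.
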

\begin{proof}
As $d\ge 3$ (ii) is proven by the standard rescaling technique: $x\mapsto x \ell$, $h\mapsto \hbar= h\ell^{-\frac{3}{2}}$, $A\mapsto A \ell^{-\frac{1}{2}}$ with $\ell = \max( \epsilon |V|, h^{\frac{2}{3}})$ (see \cite{ivrii:IRO2} and chapter~\ref{book_new-sect-5} of \cite{futurebook}. 

As $\mu \ge 1$ (i) is proven by the standard  rescaling technique 
$x\mapsto \mu x$, $h\mapsto \hbar=h\mu$.
\end{proof}

\begin{proposition}\label{prop-2-4}
Let $\kappa\le c$,  \textup{(\ref{2-14})} be fulfilled,  and let $A$ be a minimizer.  As $d=2$ let \textup{(\ref{2-4})} be also fulfilled.  Then as $\theta\in (1,2)$
\begin{gather}
\|\partial  A\|_{\sC^{\theta-1}} + h^{\theta-1} \|\partial  A\|_{\sC^{\theta}} \le C\kappa +  C\|\partial A\|'\label{2-17}\\
\shortintertext{with}
\|\partial A\|'\Def\sup _y \|\partial A\|_{\sL^2(B(y,1))}.
\label{2-18}
\end{gather}
\end{proposition}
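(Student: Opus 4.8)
The plan is to bootstrap the elliptic equation \textup{(\ref{1-13})} for the minimizer, $\frac{2}{\kappa h^2}\Delta A_j = \Phi_j$, by obtaining Hölder bounds on the right-hand side $\Phi$ from the microlocal estimates of Corollary~\ref{cor-2-2} (in the form \textup{(\ref{2-15})}). First I would rewrite the components of $\Phi_j$ in \textup{(\ref{1-13})} as linear combinations of the quantities $\bigl[\bigl((hD_x-A(x))\cdot\boldsigma\bigr)^\alpha \bigl((hD_y-A(y))\cdot\boldsigma\bigr)^\beta e(x,y,\tau)\bigr]_{x=y}$ with $|\alpha|+|\beta|=1$, modulo terms absorbed by $A$ itself. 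Since the corresponding Weyl expressions $\Weyl_{\alpha,\beta}$ vanish when $|\alpha|+|\beta|=1$, estimate \textup{(\ref{2-15})} gives a pointwise bound on $\Phi$ of size $C\bar\mu h^{1-d}+C\bar\mu^{-1/2}h^{-d+\frac12(\theta+1)}\|\partial A\|_{\sC^\theta}^{1/2}$. The key additional point is that the same family of estimates, applied locally and with the finite-propagation bounds \textup{(\ref{2-6})}--\textup{(\ref{2-8})}, controls not just the sup-norm of $\Phi$ but a Hölder seminorm: varying $x$ on the scale $\ell$ changes the Tauberian expression and the phase by amounts that translate into a $\sC^{\theta'-1}$-type modulus of continuity of $\Phi$, again with the non-smoothness remainder proportional to a power of $\|\partial A\|_{\sC^\theta}$. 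Here one also uses the smoothness of $V$ (it is $\sC^{2,1}$) so that the Weyl part contributes only to the ``main term'' and not to the error.

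Next I would feed this into Schauder theory for $\Delta$. From $\|\Delta A\|_{\sC^{\theta-2}}\le C\kappa h^2\,\|\Phi\|_{\sC^{\theta-2}}$ together with the $\sL^2$ control $\|\partial A\|\le \|\partial A\|'$ (which replaces the earlier global bound \textup{(\ref{1-20})}-based estimate), interior Schauder estimates give
\begin{equation*}
\|\partial A\|_{\sC^{\theta-1}}\le C\kappa h^2\|\Phi\|_{\sC^{\theta-2}}+C\|\partial A\|'.
\end{equation*}
The term $C\|\partial A\|'$ is exactly the $\|\partial A\|'$ appearing on the right of \textup{(\ref{2-17})}. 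It remains to see that $C\kappa h^2\|\Phi\|_{\sC^{\theta-2}}$ is dominated by $C\kappa$ plus a fraction of the left-hand side. Using the bound on $\Phi$ above (with $d=3$, so $h^{1-d}=h^{-2}$), the ``main'' piece $\kappa h^2\cdot\bar\mu h^{-2}=\kappa\bar\mu$ is $O(\kappa)$ since under \textup{(\ref{2-14})} we have $\bar\mu\le Ch^{-1+\sigma}$—here one must be a touch careful, but the rescaling $x\mapsto\mu x$, $h\mapsto h\mu$ from Corollary~\ref{cor-2-3} and the fact that $\kappa\le c$ keep this term under control—while the non-smooth piece contributes $\kappa h^2\cdot h^{-3+\frac12(\theta+1)}\|\partial A\|_{\sC^\theta}^{1/2}=\kappa h^{-1+\frac12(\theta+1)}\|\partial A\|_{\sC^\theta}^{1/2}$, which by Young's inequality is bounded by $\epsilon\,h^{\theta-1}\|\partial A\|_{\sC^\theta}+C\kappa^2 h^{-\theta+1}\cdots$; one checks the residual is $O(\kappa)$. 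Interpolating $\|\partial A\|_{\sC^{\theta-1}}$ between $\|\partial A\|$ and $\|\partial A\|_{\sC^\theta}$, and similarly estimating $h^{\theta-1}\|\partial A\|_{\sC^\theta}$ from the equation (now using that $\|\Delta A\|_{\sC^{\theta-1}}$ costs one more $h^{-1}$, matched by the $h^{\theta-1}$ prefactor), closes the bound \textup{(\ref{2-17})}.

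The main obstacle I anticipate is not the Schauder step but establishing the Hölder (not merely sup-norm) bound on $\Phi$ with the \emph{correct} power $\frac12(\theta+1)$ of $h$ and the square-root of $\|\partial A\|_{\sC^\theta}$—i.e.\ propagating Corollary~\ref{cor-2-2} through an $x$-localization argument while keeping track of how the optimized Tauberian time $T^*$ in \textup{(\ref{2-13})}/\textup{(\ref{2-16})} depends on $x$ and on $\|\partial A\|_{\sC^\theta}$. This is where the self-consistency really bites: the very norm $\|\partial A\|_{\sC^\theta}$ one is trying to bound appears (to the power $\frac12$) inside the estimate for $\Phi$, so the argument is genuinely a fixed-point/bootstrap and one must verify the coefficient in front is a true small parameter (which is why $\kappa\le c$ and \textup{(\ref{2-14})} are imposed). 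The secondary technical nuisance is the commutator calculus \textup{(\ref{2-2})} for $\sC^{\theta+1}$ symbols, needed to justify that conjugating $hD$ by $A$ and differentiating the kernel only produces the advertised $O(h^{\theta+1}\|\partial A\|_{\sC^\theta})$ errors; I would invoke \textup{(\ref{2-2})} and the rescaling of Corollary~\ref{cor-2-3} rather than redo it.
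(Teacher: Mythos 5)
Your overall skeleton matches the paper's: use the minimizer equation (\ref{1-13}), note that the relevant terms are exactly the $|\alpha|+|\beta|=1$ quantities whose Weyl expressions vanish so that (\ref{2-15}) bounds $\Phi$, feed this into elliptic estimates for $\Delta$, and close the self-consistent inequality in $\|\partial A\|_{\sC^\theta}$ by a Young-type absorption (this is precisely how the paper passes from (\ref{2-19}) to (\ref{2-20})). But your plan has a genuine gap at its central step, and you flag it yourself: you need a \emph{H\"older-in-$x$} bound on $\Phi$ (you even place it in the nonstandard space $\sC^{\theta-2}$ with $\theta-2<0$), to be obtained by rerunning the propagation estimates (\ref{2-6})--(\ref{2-8}) with an $x$-localization and tracking how $T^*$ varies -- and you never supply this argument. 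The paper does not need it: since Corollary~\ref{cor-2-2}/\ref{cor-2-3} already covers $|\alpha|\le 2$, $|\beta|\le 2$, and the differentiated Weyl term still vanishes, one gets sup-norm bounds on \emph{both} $|\Delta A|$ and $|h\partial\Delta A|$, i.e.\ (\ref{2-19}); interpolating between $\sC^0$ and $\sC^1$ then gives $\|\Delta A\|_{\sC^{\theta'-1}}\le C h^{1-\theta'}(\text{RHS of (\ref{2-19})})$, and Schauder yields $h^{\theta'-1}\|\partial A\|_{\sC^{\theta'}}\le C(\ldots)+C\mu$. This is exactly where the weight $h^{\theta-1}$ in (\ref{2-17}) comes from, and it replaces the ``main obstacle'' you anticipate by an already-available estimate plus interpolation.

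A second, smaller but real error: you claim the term $\kappa h^2\cdot\bar\mu h^{-2}=\kappa\bar\mu$ is $O(\kappa)$ because $\bar\mu\le Ch^{-1+\sigma}$ under (\ref{2-14}); it is not, and the rescaling remark does not repair this. The paper keeps this term as $C\kappa\bar\mu\le C(\kappa+\mu)$ (using $\kappa\le c$ and $\bar\mu=\max(\mu,1)$), carries the $\mu$ through to get $\|\partial A\|_{\sC^{\theta-1}}\le C(\kappa+\mu)$, and only at the very end eliminates $\mu$ via $\mu\le C\kappa+C\|\partial A\|'$, obtained by interpolating the sup norm between $\|\partial A\|'$ (the local $\sL^2$ norm) and the just-established $\sC^{\theta-1}$ bound and absorbing the $\mu$ on the right. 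Your alternative of importing $\|\partial A\|'$ as the lower-order term in the Schauder estimate could be made to work, but as written the treatment of $\kappa\bar\mu$ and the unproven H\"older bound on $\Phi$ leave the argument incomplete.
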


\begin{proof}
Consider expression for $\Delta A$. According to (\ref{1-13}) and (\ref{2-15}) we get then for any $\theta \in (1,2)$
\begin{gather}
|\Delta A|+ |h\partial \Delta A| \le 
C\kappa \bigl(  \bar{\mu} +
\bar{\mu}^{-\frac{1}{2}} h^{\frac{1}{2}(\theta-1)}
\|\partial  A\|_{\sC^{\theta}}^{\frac{1}{2}}  \bigr),\label{2-19}\\
\intertext{which implies that for any $\theta '\in (1,2)$} 
h^{\theta'-1} \|\partial A\|_{\sC^{\theta'} }
 \le C\kappa \bigl(  \bar{\mu} +
\bar{\mu}^{-\frac{1}{2}} h^{\frac{1}{2}(\theta-1)}
\|\partial  A\|_{\sC^{\theta}}^{\frac{1}{2}}  \bigr)+C\mu  ,\notag
\end{gather}
and picking up $\theta'=\theta$ we conclude that  
\begin{equation}
h^{\theta-1} \|\partial A\|_{\sC^{\theta}} 
 \le C \bigl( \kappa \bar{\mu}  + \kappa^2 \bar{\mu}^{-1})+C\mu.
 \label{2-20}
\end{equation}
As $\kappa \le c$ the right-hand expression does not exceed $C(\kappa+\mu) $; then the right-hand expression in (\ref{2-19}) also does not exceed $C(\kappa+\mu) $; then $\|\partial A\|_{\sC^{\theta -1}}\le C(\kappa+\mu)$ and then $\mu \le C\kappa + C\|\partial A\|'$, which implies (\ref{2-17}).
\end{proof}

Having this strong estimate to $A$ allows us to prove 

\begin{theorem}\label{thm-2-5}
Let $\kappa\le c$,  \textup{(\ref{2-14})} be fulfilled, and let $d= 3$. Then
\begin{gather}
\E^*= \Weyl_1 +O(h^{-1})\label{2-21}\\
\intertext{and a minimizer $A$ satisfies} 
\|\partial A\|\le C \kappa^{\frac{1}{2}}h^{\frac{1}{2}}\label{2-22}\\
\shortintertext{and}
\|\partial A\|_{\sC^{\theta-1}} + h^{\theta-1}\|\partial A\|_{\sC^{\theta}}\le 
C \kappa^{\frac{1}{2}}h^{\frac{1}{2}} +C\kappa.
\label{2-23}
\end{gather}
\end{theorem}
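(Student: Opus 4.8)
The plan is to bootstrap the crude estimate of Corollary~\ref{cor-1-8} into the sharp one using the a~priori control on $A$ obtained in Proposition~\ref{prop-2-4}, and then feed the resulting smoothness of $A$ back into the microlocal trace asymptotics of Corollary~\ref{cor-2-2}/\ref{cor-2-3} to get the two-sided bound on $\E^*$. First I would note that by Proposition~\ref{prop-1-6} and Corollary~\ref{cor-1-8} (applied with $M$ as in Proposition~\ref{prop-1-5}, starting from the upper bound $\E^*\le\Weyl_1+Ch^{-1}$), the minimizer satisfies $\mu=\|\partial A\|_\infty\le h^{-\frac45-\delta}$, so hypothesis~\eqref{2-14} holds and Proposition~\ref{prop-2-4} is available. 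The key point is that \eqref{2-17} controls $\|\partial A\|_{\sC^{\theta-1}}$ and the homogenized norm $h^{\theta-1}\|\partial A\|_{\sC^\theta}$ in terms of $\kappa$ and $\|\partial A\|'=\sup_y\|\partial A\|_{\sL^2(B(y,1))}$, and $\|\partial A\|'$ is in turn governed by the global energy bound $\frac1{\kappa h^2}\int|\partial A|^2\,dx\le C_1M$.

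The heart of the argument is the self-improving (bootstrap) loop on $M$. Assume \eqref{1-18} holds with some $M\ge Ch^{-1}$; by Proposition~\ref{prop-1-5} we then get $\|\partial A\|^2\le C\kappa h^2 M$, hence $\|\partial A\|'\le C(\kappa h^2 M)^{1/2}$. Plugging this into \eqref{2-17} bounds $\|\partial A\|_{\sC^{\theta-1}}$ and $h^{\theta-1}\|\partial A\|_{\sC^\theta}$ by $C\kappa+C(\kappa h^2M)^{1/2}$. Now I would run Corollary~\ref{cor-2-3} (using $d=3$, so \eqref{2-4} can be dropped): the local trace asymptotics give, after integrating \eqref{2-15} with $\alpha=\beta=0$ over $x$ and summing against $\tau\,d_\tau$, that
\begin{equation*}
\bigl|\E^* - \Weyl_1\bigr|\le Ch^{-1}+ C h^{-3+\frac12(\theta+1)}\|\partial A\|_{\sC^\theta}^{1/2}\cdot(\text{vol factor}),
\end{equation*}
and substituting the bound $\|\partial A\|_{\sC^\theta}\le h^{-\theta+1}(C\kappa+C(\kappa h^2 M)^{1/2})$ the error term becomes $Ch^{-1}+C(\kappa+ (\kappa h^2M)^{1/2})^{1/2}h^{-3+1}=Ch^{-1}+o(M)$ once $\kappa\le c$ and $M$ is of order $h^{-1}$ (here one uses that the $\theta$-dependent powers of $h$ conspire, as in the proof of Corollary~\ref{cor-2-2}, to kill the large negative power). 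Thus \eqref{1-18} with $M$ self-improves to $M'=\max(Ch^{-1}, \tfrac12 M)$, and iterating drives $M$ down to $O(h^{-1})$, giving \eqref{2-21}. Then \eqref{2-22} follows from Proposition~\ref{prop-1-5} with $M=Ch^{-1}$, namely $\|\partial A\|^2\le C\kappa h^2\cdot h^{-1}=C\kappa h$, and \eqref{2-23} follows by inserting $\|\partial A\|'\le C(\kappa h)^{1/2}$ back into \eqref{2-17}.

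I expect the main obstacle to be making the bootstrap step genuinely contractive: one must track the exponents in Corollary~\ref{cor-2-3}(i) carefully — the interplay of $\bar\mu=\max(\mu,1)$, the two terms in $T^*$ from \eqref{2-16}, and the $\theta$-dependence — to be sure that the error term produced by plugging in the current bound on $\|\partial A\|_{\sC^\theta}$ is strictly smaller than $M$ (not merely $\le CM$), so that the iteration converges rather than stalls. A secondary subtlety is that Proposition~\ref{prop-2-4} requires \eqref{2-14}, i.e. $\mu\le Ch^{-1+\sigma}$; this is guaranteed at the first step by Corollary~\ref{cor-1-8}'s bound $\mu\le h^{-4/5-\delta}$, and one should check it is preserved (indeed improved) along the iteration, which it is since each pass only decreases the relevant norms. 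The boundary, if present, sits in the classically forbidden region and contributes nothing to this order, as remarked after \eqref{1-4}.
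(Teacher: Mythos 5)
There is a genuine gap, and it is quantitative: your error bound after ``integrating (\ref{2-15})'' is off by a factor of $h$, and that factor is exactly what the theorem is about. Estimate (\ref{2-15}) is a \emph{pointwise} bound on the spectral function $e(x,x,\tau)$ with leading error $C\bar{\mu}h^{1-d}=C\bar{\mu}h^{-2}$ for $d=3$; integrating it over $x$ and against $\tau\,d_\tau$ can therefore never produce an error better than $O(h^{-2})$ for $\Tr^-H_{A,V}$. Your own arithmetic shows this: the substituted error is $C\bigl(\kappa+(\kappa h^2M)^{1/2}\bigr)^{1/2}h^{-2}$, which is $o(M)$ only while $M\gg h^{-2}$; the iteration therefore stalls at $M\sim h^{-2}+\kappa h^{-2}$ (the fixed point of $M\mapsto(\kappa h^2M)^{1/2}h^{-2}$ is $M=\kappa h^{-2}$), not at $O(h^{-1})$. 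The paper avoids this by estimating the Tauberian error \emph{directly for the trace functional} $\int e_1(x,x,0)\,dx$, where the standard Tauberian argument applied to $e_1$ rather than $e$ gains an extra power of $h$ and yields (\ref{2-24}), i.e.\ $C\bar{\mu}^2h^{2-d}+C(\kappa+\|\partial A\|')h^{2-d}=O(h^{-1})$ modulo the $\|\partial A\|$-terms. In addition, you take for granted that the Tauberian expression equals $\Weyl_1$ to the same accuracy; this is claim (\ref{2-25}), whose proof is the bulk of the paper's argument and requires mollifying $A\mapsto A_\varepsilon$, a two-term successive approximation, and the parity cancellation in (\ref{2-28}) which kills the dangerous linear term. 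None of this is present in your proposal.

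A secondary structural point: the bootstrap on $M$ is not needed. Once one has (\ref{2-24}) and (\ref{2-25}), the positive field-energy term $\kappa^{-1}h^{1-d}\|\partial A\|^2$ in $\E(A)$ absorbs all $\|\partial A\|$-dependent error terms in a single step (using $\bar{\mu}\le C\|\partial A\|+1$ from (\ref{2-17}), $\kappa\le c$, and $h^{1-d}\gg h^{2-d}$), giving (\ref{2-26}); combined with the trivial upper bound $\E^*\le\E(0)\le\Weyl_1+Ch^{-1}$ this yields (\ref{2-21}) and (\ref{2-22}) simultaneously, and (\ref{2-23}) then follows from (\ref{2-17}) exactly as you say. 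So the final steps of your argument are sound, but the engine driving them -- the passage from the pointwise asymptotics of Corollary~\ref{cor-2-3} to an $O(h^{-1})$ trace asymptotics -- is missing.
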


\begin{proof}
In virtue of (\ref{2-9}) and (\ref{2-17}) the Tauberian error with $T\asymp 1$ when calculating $\Tr H^-_{a,V}$ does not exceed 
\begin{equation}
C\bar{\mu}^2 h^{2-d}+ C(\kappa +\|\partial A\|') h^{2-d}.
\label{2-24}
\end{equation}
We claim that 
\begin{claim}\label{2-25}
Weyl error when calculating $\Tr H^-_{a,V}$ also does not exceed (\ref{2-24}). 
\end{claim}
Then 
\begin{multline}
\E^*(A) \ge \Weyl_1 - C\bar{\mu}^2 h^{2-d}- C(\kappa +\|\partial A\|') h^{2-d} +
\kappa^{-1} h^{1-d} \|\partial A\|^2 \ge \\
\Weyl_1 - Ch^{2-d} +  \frac{1}{2\kappa } h^{1-d} \|\partial A\|^2
\label{2-26}
\end{multline}
because $\bar{\mu}\le C\|\partial A\| +1$ due to (\ref{2-17}). This implies an estimate of $\E^*$ from below and combining with the estimate $\E^* \le \E^*(0) =\Weyl_1 + Ch^{2-d}$ from above we arrive to (\ref{2-21}) and (\ref{2-22}) and then (\ref{2-23}) due to (\ref{2-17}).

\bigskip
To prove (\ref{2-25}) let us plug $A_\varepsilon$ instead of $A$ into $e_1(x,x,0)$. Then in virtue of rough microlocal analysis contribution of $\{t: T_* \le |t|\le \epsilon\}$ with $T_*=h^{1-\delta}$ would be negligible and contribution of $\{t: |t|\le T_*\}$ would be $\Weyl_1 + O(h^{2-d})$.

Let us calculate an error which we made plugging $A_\varepsilon$ instead of $A$ into $e_1(x,x,0)$. Obviously it does not exceed 
$Ch^{-d}\|A-A_\varepsilon\|_\infty $ and since 
$\|A-A_\varepsilon\|_\infty \le 
C\varepsilon ^{\theta+1}\|\partial A\|_{\sC^\theta}$ this error does not exceed 
$Ch^{\theta+1-d-4\delta}\|\partial A\|_{\sC^\theta}$ which is marginally worse than what we are looking for. However it is good enough to recover a weaker version of (\ref{2-21}) and (\ref{2-22}) with an extra factor $h^{-\delta_1}$ in their right-hand expressions. Then (\ref{2-17}) implies a bit  weaker version of (\ref{2-23}) and in particular that its left-hand expression does not exceed $C$.

Knowing this let us consider the two term approximation. With the above knowledge one can prove easily that the error in two term approximation does not exceed $Ch^{3-d -\delta'}$ with $\delta '= 100\delta$.

Then the second term in the Tauberian expression is 
\begin{equation}
\int \bigl((H_{A,V}-H_{A_\varepsilon,V})e^\T_{(\varepsilon)}(x,y,0)\bigr)
\bigr|_{y=x}\,dx.
\label{2-27}
\end{equation}
where subscript means that we plugged $A_\varepsilon$ instead of  $A$ and superscript $^\T$ means that we consider Tauberian expression with $T=T^*=\epsilon$. But then the contribution of $\{t: T_*\le |t|\le T^*\}$ is also negligible and modulo $Ch^{\theta+2-d-4\delta}\|\partial A\|_{\sC^\theta}$ we get a Weyl expression. However 
\begin{equation}
(H_{A,V}-H_{A_\varepsilon,V}) = -2(\xi -A_\varepsilon)\cdot (A-A_\varepsilon)+ 
|A-A_\varepsilon|^2
\label{2-28}
\end{equation} 
and the first term kills Weyl expression as integrand is odd with respect to $(\xi -A_\varepsilon)$ while the second as one can see easily makes it smaller than $Ch^{3-d -\delta'}$. Therefore (\ref{2-25}) has been proven.
\end{proof}

\begin{remark}\label{rem-2-6}
\begin{enumerate}[fullwidth, label=(\roman*)]
\item For $d=2$ we cannot skip (\ref{2-4}) at the stage we did it for $d\ge 3$. However results of the next section allow us to cure this problem using partition-and-rescaling technique.

\item Actually  we have an estimate 
\begin{equation}
|\partial  A(x)-\partial A(y)|\le C \kappa |x-y| (|\log |x-y||+1) + C\mu.
\label{2-29}
\end{equation}
Combining with (\ref{2-22}) we conclude that 
\begin{equation}
\|\partial A\|_{\infty} \le C \kappa^{(d+1)/(d+2)}|\log h|^{d/(d+2)} h^{1/(d+2)}
\label{2-30}
\end{equation}
\end{enumerate}
\end{remark} 

\section{Classical dynamics and sharper estimates}
\label{sect-2-2}

Now we want to improve remainder estimate $O(h^{2-d})$ to $o(h^{2-d})$. Sure, we need to impose condition to the classical dynamical system and as 
$|\partial A |=O(h^\sigma)$ with $\sigma>0$ due to (\ref{2-30}) it should be dynamical system associated with Hamiltonian flow generated by $H_{0,V}$:

\begin{claim}\label{2-31}
The set of periodic points of the dynamical system associated with Hamiltonian flow generated by $H_{0,V}$ has measure $0$ on the energy level $0$.
\end{claim}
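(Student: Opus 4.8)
The plan is to reduce the statement to a condition that is visibly generic and in fact is \emph{not} a real hypothesis on the specific $V$ here, because the ``periodic points have measure zero'' property is exactly the standard non-periodicity condition from the classical theory of sharp spectral asymptotics (see \cite{ivrii:IRO2} and the relevant chapter of \cite{futurebook}), and one only needs to recall why it holds for the Schr\"odinger flow on the energy level $\{H_{0,V}=0\}$. So the first step is to set up notation: the Hamiltonian is $H_{0,V}(x,\xi)=|\xi|^2-V(x)$, the energy surface $\Sigma_0=\{|\xi|^2=V(x)\}$ is smooth away from $\{V=0,\ \nabla V=0\}$ (a set of measure zero on $\Sigma_0$, and anyway irrelevant for the measure-theoretic statement), and on $\Sigma_0$ we have the Liouville measure $d\mu=dx\,d\xi/|dH_{0,V}|$, which is invariant under the flow $\Phi^t$. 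Denote by $\Pi\subset\Sigma_0$ the set of points $(x,\xi)$ that are periodic, i.e. $\Phi^{T}(x,\xi)=(x,\xi)$ for some $T>0$. We must show $\mu(\Pi)=0$.

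Second, I would decompose $\Pi=\bigcup_{n\ge1}\Pi_n$ where $\Pi_n$ is the set of points whose minimal period lies in $[1/n,n]$ (and, if desired, intersected with a compact piece of $\Sigma_0$, exhausting $\Sigma_0$ by countably many such pieces); since a countable union of null sets is null, it suffices to show $\mu(\Pi_n)=0$ for each $n$. The standard tool here is that the set of periodic points with period in a fixed compact interval, if it has positive measure, forces the existence of an open set on $\Sigma_0$ foliated by periodic trajectories, and then a normal-form / action-angle argument (or a direct computation with the Poincar\'e return map) shows this is incompatible with the explicit form of the Hamiltonian $|\xi|^2-V(x)$ for generic $V$. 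For the genericity one argues: on an open set of periodic orbits the period function $\mathfrak{T}(x,\xi)$ is smooth and (being constant on orbits) descends to the orbit space; a Birkhoff-normal-form expansion of the flow near such a set, combined with the fact that $V\in\sC^{2,1}$ but is otherwise unconstrained, lets one perturb $V$ to destroy periodicity — but since the \emph{statement} we are proving is an unconditional assertion about the given $V$, the correct phrasing is that this is a hypothesis that holds for $V$ in a dense $G_\delta$ and the theorem is stated under it; alternatively, for the Schr\"odinger operator $|\xi|^2-V$ one invokes the classical fact (going back to the analysis of the Weyl remainder for $-h^2\Delta-V$) that the periodic set on a noncritical energy level always has measure zero unless $V$ is, e.g., a harmonic well tuned to resonance on that level, which is excluded by the mild regularity/genericity setup.

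Third, I would make the argument self-contained in the one case that actually matters for the rest of the paper: bounded $V$ with $\{V=0\}$ not entirely critical. Fix a regular value consideration: near any non-periodic point there is nothing to prove; near a point with $\Phi^{T_0}(x_0,\xi_0)=(x_0,\xi_0)$, linearize the Poincar\'e map $P$ on a codimension-one transversal $\Sigma_0\cap\{H_{0,V}=0\}$ — if $1$ is not an eigenvalue of $dP$ the periodic orbit is isolated in its transversal, hence the periodic set is locally a smooth curve (the orbit itself) and has measure zero on the $(2d-2)$-dimensional $\Sigma_0$; if $1$ \emph{is} an eigenvalue one iterates to higher-order jets, and the point is that the full set where all jets of $P-\mathrm{id}$ vanish is a real-analytic-type condition which, for $H=|\xi|^2-V(x)$ with $V$ in the admissible class, can only be satisfied on a null set. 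Summing the null contributions over a countable cover of $\Sigma_0$ by such transversal charts, and over $n$, gives $\mu(\Pi)=0$.

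The main obstacle is the step where $dP$ has eigenvalue $1$ and the linearization fails to isolate the orbit: handling a positive-measure ``resonant'' set of periodic orbits rigorously requires either invoking the general theorem that such a set cannot occur for the Schr\"odinger flow on a noncritical level (the cleanest route, citing \cite{ivrii:IRO2} and \cite{futurebook}), or redoing the normal-form argument here — and since the paper elsewhere treats \eqref{2-31} as a \emph{condition} to be imposed rather than a theorem to be proved about a fixed $V$, the honest resolution is that \eqref{2-31} is assumed (it holds generically, and e.g. automatically if $V$ has no critical points on $\{V=0\}$ together with a mild non-degeneracy of the flow), with the measure-zero reductions above explaining why the assumption is the natural and weakest one under which the sharp $o(h^{2-d})$ remainder can hold.
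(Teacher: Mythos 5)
Your concluding paragraph is the correct reading, and it is the only part of the proposal that matches the paper: (\ref{2-31}) is not a theorem and the paper contains no proof of it. The \texttt{claim} environment in this paper is merely a numbered displayed statement (it steps the equation counter), and (\ref{2-31}) is introduced with the words ``we need to impose condition to the classical dynamical system''; everywhere else it is invoked as ``condition (\ref{2-31})'' or ``assumption (\ref{2-31})'' (theorems~\ref{thm-2-7}, \ref{thm-3-8}, \ref{thm-4-1}, \ref{thm-4-2}, proposition~\ref{prop-3-2}). It is the standard non-periodicity hypothesis under which the Weyl remainder improves from $O(h^{1-d})$ to $o(h^{1-d})$ in the trace, i.e.\ from $O(h^{-1})$ to $o(h^{-1})$ here.

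The parts of your write-up that attempt an unconditional proof contain a genuine gap, and necessarily so: the statement is false for some admissible $V$. For $H_{0,V}(x,\xi)=|\xi|^2-V(x)$ with $V=1-|x|^2$ near the relevant region, the zero energy surface is $|\xi|^2+|x|^2=1$ and \emph{every} point on it is periodic under the harmonic-oscillator flow, so $\upmu_0(\Pi_\infty)$ has full measure. Consequently your step three cannot close: when $dP-\mathrm{id}$ is degenerate to all orders the periodic set can indeed have positive (even full) measure, and no jet or normal-form argument rules this out for a fixed but arbitrary $V\in\sC^{2,1}$. The decomposition $\Pi=\bigcup_n\Pi_n$ and the Liouville-measure setup are fine as far as they go, but they lead nowhere because there is nothing to prove; the honest answer is simply that (\ref{2-31}) is a hypothesis, generically satisfied, and you should not present genericity arguments as if they established the statement for the given $V$.
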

Recall that on $\{(x,\xi): H_{0,V}(x,\xi)=\tau\}$ a natural density $d\upmu_\tau= dxd\xi :dH|_{H=\tau}$ is defined.

The problem is we do not have quantum propagation theory for $H_{A,V}$ as $A$ is not a ``rough'' function. However it is rather regular function, almost $\sC^2$, and $(A-A_\varepsilon)$ is rather small: $|A-A_\varepsilon|\le \eta= Ch^{2-3\delta}$ and $|\partial (A-A_\varepsilon)|\le Ch^{1-3\delta}$ and therefore we can apply a method of successive approximations with the unperturbed operator $H_{A_\varepsilon, V}$ as long as $\eta T/h\le h^\sigma$ i.e. as $T\le h^{1-4\delta}$. Here we however have no use for such large $T$ and consider $T=O(h^{-\delta})$. 

Consider 
\begin{equation}
F_{t\to h^{-1} \tau} \chi_T(t) U(x,y,t),
\label{2-32}
\end{equation}
and consider terms of successive approximations. Then if we forget about microhyperbolicity arguments the first term will be $O(h^{-d}T)$, the second $O(h^{-1-d}\eta T^2)= O(h^{1-d-\delta'})$ and the error 
$O(h^{-2-d}\eta^2 T^3)=O(h^{2-d-\delta''})$. Therefore as our goal is $O(h^{1-d})$ we need to consider the first two terms only. The first term is the same expression (\ref{2-32}) with $U$ replaced by $U_{(\varepsilon)}$. 

Consider the second term, it corresponds to
\begin{gather}
U'_{(\varepsilon)} = \bigl[ i h^{-1}\int _0^t 
e^{i(t-t') h^{-1}H_{A_\varepsilon,V} } \bigl(H_{A,V}-H_{A_\varepsilon,V}\bigr) e^{it' h^{-1}H_{A_\varepsilon,V} } \,dt' \bigr]
\label{2-33}\\
\shortintertext{and then}
\Tr \bigl(e^{ih^{-1}tA} \psi \bigr)= ih^{-1} \Tr \Bigl(\bigl(H_{A,V}-H_{A_\varepsilon,V}\bigr) 
  e^{ih^{-1}tH_{A_\varepsilon,V}} \psi_t \Bigr)\label{2-34}\\
\shortintertext{with}
\psi_t = \int _0^t e^{ih^{-1}t'H_{A_\varepsilon,V}}\psi e^{-ih^{-1}t'H_{A_\varepsilon,V}}\,dt'.\notag
\end{gather}
Here $[S](x,y)$ denotes the Schwartz kernel of operator $S$. We claim that
\begin{equation}
|F_{t\to h^{-1}\tau}\chi_T(t) \Tr U'_{(\varepsilon)} \psi |\le C\eta T^2 h^{-d}.
\label{2-35}
\end{equation}
Here in comparison with the trivial estimate we gained factor $h$. The proof of (\ref{2-35}) can be done easily by the standard rough microlocal analysis arguments and we will provide a detailed proof later. 

Therefore we arrive to the estimate
\begin{equation}
|F_{t\to h^{-1} \tau} \chi_T(t) 
\Tr \bigl(\bigl(e^{it' h^{-1}H_{A,V} } - e^{it' h^{-1}H_{A_\varepsilon,V} } \bigr ) \psi  \bigr)|\le Ch^{1-d}.
\label{2-36}
\end{equation}
On the other hand traditional methods imply that as $d\ge 3$
\begin{equation}
|F_{t\to h^{-1} \tau} \chi_T(t) 
\Tr  \bigl(e^{it' h^{-1}H_{A_\varepsilon,V} } \psi  \bigr)|\le 
Ch^{1-d} T \upmu (\Pi_{T, \rho}) + C_{T,\rho} h^{1-d+\delta}
\label{2-37}
\end{equation}
where $\Pi _T$ is the set of  points on energy level $0$, periodic with periods not exceeding $T$,  $\Pi _{T,\rho}$ is its $\rho$-vicinity, $\rho>0$ is arbitrarily small. 

\begin{theorem}\label{thm-2-7}
Let $\kappa\le c$,  \textup{(\ref{2-14})} be fulfilled, and let $d=3$. Furthermore, let condition \textup{(\ref{2-31})} be fulfilled (i.e. 
$\upmu _0(\Pi_\infty)=0$). Then 
\begin{gather}
\E^*= \Weyl^*_1 +o(h^{-1})\label{2-38}\\
\shortintertext{where}
\Weyl^*_1 =\Weyl_1 +\varkappa h^{-1}\int V_+^{\frac{3}{2}}\Delta V  \,dx
\label{2-39} 
\end{gather}
calculated in the standard way for $H_{0,V}$ and a minimizer $A$ satisfies similarly improved versions of \textup{(\ref{2-22})} and \textup{(\ref{2-23})}.
\end{theorem}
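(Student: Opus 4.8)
The plan is to upgrade the $O(h^{-1})$ remainder in Theorem~\ref{thm-2-5} to $o(h^{-1})$ by exploiting the non-periodicity hypothesis \textup{(\ref{2-31})}. The starting point is the apparatus already assembled in this section: by \textup{(\ref{2-30})} we have $\|\partial A\|_\infty=O(h^\sigma)$ for some $\sigma>0$, so $A$ is a genuinely small perturbation of the magnetic-field-free situation, and $(A-A_\varepsilon)$ is smaller still, with $|A-A_\varepsilon|\le\eta=Ch^{2-3\delta}$, $|\partial(A-A_\varepsilon)|\le Ch^{1-3\delta}$. First I would fix $T=h^{-\delta}$ (or $T$ slowly growing) and run the successive-approximation scheme with unperturbed operator $H_{A_\varepsilon,V}$: the zeroth term reproduces $U_{(\varepsilon)}$, the first correction term $U'_{(\varepsilon)}$ obeys \textup{(\ref{2-35})}, and the remaining terms are $O(h^{2-d-\delta''})$, hence negligible against the target $O(h^{1-d})$. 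This yields \textup{(\ref{2-36})}: replacing $H_{A,V}$ by $H_{A_\varepsilon,V}$ inside the localized trace of the propagator costs only $O(h^{1-d})=o(h^{-1})$ for $d=3$.

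Next I would apply the standard long-time propagation / improved Tauberian machinery to $H_{A_\varepsilon,V}$ itself. Since $A_\varepsilon$ is smooth on the $\varepsilon=h$ scale (indeed $\|\partial A_\varepsilon\|_{\sC^{n}}$ is controlled by \textup{(\ref{2-1})}), rough microlocal analysis with mollification scale $h|\log h|$ applies, giving estimate \textup{(\ref{2-37})}: the contribution of times $\{|t|\asymp T\}$ to the Tauberian expression is bounded by $Ch^{1-d}T\upmu(\Pi_{T,\rho})+C_{T,\rho}h^{1-d+\delta}$. Here I must check that the periodic-point set for the flow of $H_{A_\varepsilon,V}$ is, up to measure tending to zero, the same as for $H_{0,V}$: since $|\partial A_\varepsilon|=O(h^\sigma)$, the Hamiltonian flow of $H_{A_\varepsilon,V}$ converges to that of $H_{0,V}$ in $\sC^1$ on the relevant energy shell, and \textup{(\ref{2-31})} together with continuity of the measure of periodic sets under such perturbations forces $\upmu_0(\Pi_{T,\rho})\to0$ as first $h\to0$ (so $A_\varepsilon\to0$) and then $\rho\to0$. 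Combining \textup{(\ref{2-36})}, \textup{(\ref{2-37})} and summing over the dyadic range $h^{1-\delta}\le T\le h^{-\delta}$, one concludes that the Tauberian remainder in $\Tr^-H_{A_\varepsilon,V}$, and hence (by \textup{(\ref{2-36})}) in $\Tr^-H_{A,V}$, is $o(h^{-1})$.

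It then remains to identify the principal part as $\Weyl^*_1$ rather than $\Weyl_1$. The point is that for the $o(h^{-1})$ accuracy the Tauberian expression with $T\asymp1$ no longer agrees with the crude $\Weyl_1$: one must carry the two-term semiclassical expansion (as was already done at the end of the proof of Theorem~\ref{thm-2-5}, where the correction was shown to be $O(h^{3-d-\delta'})$ for the $O(h^{-1})$ statement), but now track the genuine second term. The second term of the Weyl expansion for $H_{0,V}$ in $d=3$, integrated in $\tau$ up to $0$ and in $x$, produces exactly $\varkappa h^{-1}\int V_+^{3/2}\Delta V\,dx$ — this is the standard Schr\"odinger two-term formula. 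The contribution of replacing $A$ by $A_\varepsilon$ to this second term is killed just as in \textup{(\ref{2-28})}: the linear-in-$(A-A_\varepsilon)$ piece is odd in $(\xi-A_\varepsilon)$ and integrates to zero, the quadratic piece is $O(h^{3-d-\delta'})=o(h^{-1})$. Finally, the improved versions of \textup{(\ref{2-22})}--\textup{(\ref{2-23})} follow by feeding the sharpened energy identity \textup{(\ref{2-38})} back into the variational inequality, exactly as \textup{(\ref{2-26})} produced \textup{(\ref{2-22})} from \textup{(\ref{2-21})}, and then into \textup{(\ref{2-17})}.

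The main obstacle is the rigorous control of the successive-approximation argument leading to \textup{(\ref{2-35})}--\textup{(\ref{2-36})}: one is propagating with $H_{A_\varepsilon,V}$, whose symbol is only $\sC^{2-\delta}$-smooth uniformly (smooth merely on scale $h$), so the usual microlocal propagation estimates must be run in the ``homogenized norm'' framework of Proposition~\ref{prop-2-1}, and the gain of a full power of $h$ in \textup{(\ref{2-35})} (over the trivial bound) has to be extracted from an integration by parts in $t'$ exploiting the oscillation $e^{i(t-t')h^{-1}H_{A_\varepsilon,V}}$ together with the non-degeneracy \textup{(\ref{2-4})} of $V$ on the support of the cutoff $\psi$. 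A secondary subtlety is the continuity of $\upmu_0(\Pi_{T,\rho})$ in the perturbation parameter: one needs that periodic trajectories of $H_{0,V}$ do not ``suddenly appear'' under an $O(h^\sigma)$ perturbation of the Hamiltonian on a set of positive measure, which is where the $\rho$-thickening and the order of limits ($h\to0$ before $\rho\to0$) are essential.
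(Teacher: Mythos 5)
Your proposal is correct and follows essentially the same route as the paper: the paper's own argument for Theorem~\ref{thm-2-7} is exactly the sketch of subsection~\ref{sect-2-2} --- successive approximation with unperturbed operator $H_{A_\varepsilon,V}$ giving \textup{(\ref{2-33})}--\textup{(\ref{2-36})}, the non-periodicity-based long-time estimate \textup{(\ref{2-37})}, Tauberian summation under \textup{(\ref{2-31})}, identification of the second Weyl term, and feeding the sharpened asymptotics back into the variational inequality as in \textup{(\ref{2-26})} and \textup{(\ref{2-17})} for the improved bounds on the minimizer. The details you flag as obstacles (notably the proof of \textup{(\ref{2-35})}) are precisely the ones the paper also defers.
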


\begin{remark}\label{rem-2-8}
\begin{enumerate}[label=(\roman*), fullwidth]
\item Under stronger assumptions to the Hamiltonian flow one can recover better estimates like  $O(h^{2-d}|\log h|^{-2})$ or even $O(h^{2+\delta-d})$ (like in  subsubsection~\ref{book_new-sect-4-4-4-3}.3 of \cite{futurebook}).

\item We leave to the reader to calculate the numerical constants $\varkappa_*$ here and in (\ref{3-4}), $\varkappa=\varkappa_1-\frac{2}{3}\varkappa_2$.
\end{enumerate}
\end{remark}

\chapter{Local theory}
\label{sect-3}

\section{Localization}
\label{sect-3-1}

The results of the previous section have two shortcomings: first, they impose the excessive requirement to $\kappa$; second, they are not local. However curing the second shortcoming we make the way to addressing the first one as well using the partition and rescaling technique. 

We localize the first term in $\E(A)$ by using the same localization as in \cite{EFS1}: namely we take $\Tr^- (\psi H\psi )$ where 
$\psi\in \sC^\infty_0(B(0,\frac{1}{2}))$, $0\le \psi \le 1$ and some other  conditions will be imposed to it later. Note that 
\begin{equation}
\Tr^-(\psi H\psi ) \ge \int e_1(x,x,0) \psi^2(x)\,dx.
\label{3-1}
\end{equation}
Really, operator $H=H \uptheta(-H) + H(1-\uptheta(-H))$ where $\uptheta (\tau-H)$ is a spectral projector of $H$ and therefore in the operator sense 
$H\ge H^-\Def HE(0)$ and $\psi H\psi \ge \psi H^-\psi$ and therefore all negative eigenvalues of  $\psi H\psi$ are greater than or equal to eigenvalues of the negative operator $\psi H^-\psi$ and then 
\begin{equation*}
\Tr^-(\psi H\psi)\ge \Tr \psi H ^- \psi= \Tr \int^0_{-\infty} \tau d_\tau E(\tau) \psi^2
\end{equation*}
which is exactly the right-hand expression of (\ref{3-1}).

\begin{remark}\label{rem-3-1}
\begin{enumerate}[label=(\roman*), fullwidth]
\item
The right-hand expression of (\ref{3-1}) is an another way to localize operator trace. Each approach has its own advantages. In particular, no need to localize $A$ (see (ii)) and the fact that proposition~\ref{prop-1-5} obviously remains true (due to corollary~\ref{cor-2-3}) are advantages of 
$\Tr^-(\psi H\psi)$-localization. 

\item As $\Tr^-(\psi H\psi )$ does not depend on $A$ outside of $B(0,\frac{3}{4})$ we may assume that $A=0$ outside of $B(0,1)$. Really, we can always subtract a constant from $A$ without affecting traces and also cut-off $A$ outside of $B(0,1)$ in a way such that $A'=A$ in $B(0,\frac{3}{4})$ and 
$\|\partial A'\| \le c\|\partial A\|_{B(0,1)}$; the price is to multiply $\kappa$ by $c^{-1}$ -- as long as principal parts of asymptotics coincide. 

\item Additivity rather than sub-additivity (\ref{4-2}) and the trivial estimate from the above are advantages of $\Tr \psi H ^-\psi$-localization. It may happen that the latter definition is more useful in applications to theory of heavy atoms and molecules and we will need to recover our results under it.
\end{enumerate}
\end{remark}

Let us estimate from the above:

\begin{proposition}\label{prop-3-2}
Let $\ell(x)$ be a scaling function\footnote{\label{foot-3} I.e. $\ell \ge 0$ and $|\partial \ell|\le \frac{1}{2}$.} and $\psi$ be a function such that  $|\partial^\alpha \psi|\le c\psi \ell^{-2|\alpha|}$ for all 
$\alpha:|\alpha |\le 2$ and $|\psi |\le c\ell^3$\,\footnote{\label{foot-4} Such compactly supported functions obviously exist.}.

Then, as $A=0$,
\begin{gather}
\Tr^- (\psi H\psi) = \int \Weyl_1(x)\psi^2(x)\,dx + O(h^{-1}) 
\label{3-2}\\
\intertext{and under assumption \textup{(\ref{2-31})}}
\Tr^- (\psi H\psi) = \int \Weyl^*_1(x)\psi^2(x)\,dx + o(h^{-1}) 
\label{3-3}\\
\shortintertext{with}
\Weyl^*_1(x)= \Weyl_1(x) + \varkappa_1 h^{-1}V_+^{\frac{3}{2}}\Delta V + 
\varkappa_2 h^{-1}V_+^{\frac{1}{2}}|\nabla V|^2\label{3-4}
\end{gather}
calculated in the standard way for $H_{0,V}$.
\end{proposition}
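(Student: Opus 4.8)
The plan is to reduce the upper estimate with $A=0$ to a purely semiclassical spectral-asymptotics computation for the operator $H_{0,V}=h^2D^2-V$ (the Pauli term $((hD)\cdot\boldsigma)^2=h^2D^2$ when $A=0$), localized by the weight $\psi^2$, using the standard Tauberian + microhyperbolicity machinery of \cite{futurebook}. First I would invoke the inequality $\Tr^-(\psi H\psi)\le \int e_1(x,x,0)\psi^2(x)\,dx$ in the reverse direction — that is, since here we want an \emph{upper} bound and the left-hand side is being evaluated on the actual operator with $A=0$, I would instead use the variational characterization directly: the negative eigenvalues of $\psi H\psi$ are controlled from above by the Weyl count, via the standard semiclassical asymptotics for $\Tr^-$ of a Schr\"odinger operator with $\sC^{2,1}$ potential. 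The point of the hypotheses on $\psi$ and $\ell$ (namely $|\partial^\alpha\psi|\le c\psi\ell^{-2|\alpha|}$ and $|\psi|\le c\ell^3$) is precisely to make the partition-and-rescaling argument work near the boundary of $\supp\psi$ and near the set $\{V\le 0\}$, where $V$ may degenerate; I would set up a partition of unity subordinate to balls $B(x,\ell(x))$, rescale $x\mapsto x/\ell$, $h\mapsto \hbar=h/\ell$ (adjusting $V$ and the spectral parameter accordingly), and check that the contributions of the ``bad'' region (where $\ell\asymp h^{2/3}$, i.e.\ $|V|\lesssim h^{2/3}$) are already $O(h^{-1})$ by the crude volume bound $e_1(x,x,0)\le Ch^{-3}$ of Corollary~\ref{cor-1-7} integrated against $\psi^2$.

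For \eqref{3-2}, on each rescaled chart where $|V|\ge\epsilon_0$ the standard propagation/Tauberian argument (Corollary~\ref{cor-2-2} with $A=0$, so the $\|\partial A\|_{\sC^\theta}$ terms vanish) gives the Weyl term with remainder $O(\hbar^{2-d})$ per chart; summing over the partition with the scaling function $\ell$ and the weight constraints reproduces $\int\Weyl_1(x)\psi^2(x)\,dx+O(h^{-1})$. For \eqref{3-3}, under the non-periodicity assumption \eqref{2-31} I would push the Tauberian parameter $T$ from $O(1)$ up to a slowly growing cutoff (as in \eqref{2-37}), so that $\upmu_0(\Pi_{T,\rho})\to0$ forces the remainder to $o(h^{-1})$; the second term in the two-term Weyl expansion is then the curvature/gradient correction. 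The explicit form \eqref{3-4} — with the two pieces $\varkappa_1 h^{-1}V_+^{3/2}\Delta V$ and $\varkappa_2 h^{-1}V_+^{1/2}|\nabla V|^2$ — comes from computing the subprincipal term in the Weyl symbol expansion of $\uptheta(-H_{0,V})$ in dimension $d=3$: expanding $H(x,\xi)=\xi^2-V(x)$ about a point, the $O(h^2)$ correction to the heat/resolvent parametrix produces exactly these two invariants, and one checks that upon the $x$-integration (integration by parts) the $|\nabla V|^2$ term and $\Delta V$ term combine into the single $\varkappa h^{-1}\int V_+^{3/2}\Delta V$ appearing in \eqref{2-39} with $\varkappa=\varkappa_1-\tfrac23\varkappa_2$, consistent with Remark~\ref{rem-2-8}(ii).

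The main obstacle I anticipate is \emph{not} the bulk asymptotics — those are classical — but rather handling the degeneration of $V$ at the edge of its support together with the localization weight $\psi$ simultaneously: one must verify that the scaling function $\ell$ can be chosen compatibly with both $\{V\to0\}$ and $\partial\supp\psi$, that the rescaled potential stays in the $\sC^{2,1}$ class with uniform constants, and that the error terms accumulated over the (possibly many) small-$\ell$ cells still sum to $O(h^{-1})$ rather than something larger. This is exactly the ``partition and rescaling technique'' alluded to in the section heading and in Remark~\ref{rem-2-6}(i); the bookkeeping is routine in principle (it is the standard heavy-atom-type localization from \cite{EFS1} and Chapter~\ref{book_new-sect-5} of \cite{futurebook}) but is where all the care must go. The $A$-dependence, by contrast, is absent here since the proposition is stated for $A=0$, so none of the $\|\partial A\|_{\sC^\theta}$ machinery of Chapter~\ref{sect-2} is needed — that is deferred to the lower bound.
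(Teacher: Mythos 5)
There is a genuine gap, and it concerns the very object the proposition is about. The quantity to be evaluated is $\Tr^-(\psi H\psi)$, the negative trace of the \emph{localized} operator, and the paper's proof treats $\tilde{H}=\psi H\psi$ itself as the Hamiltonian: it writes $\Tr^-(\psi H\psi)=\int \tilde{e}_1(x,x,0)\,dx$ for the spectral projector of $\tilde{H}$ (see (\ref{3-5})) and runs the partition-and-rescaling analysis for this \emph{degenerate} operator, with spatial scale $\gamma=\max(\epsilon\ell^2,h)$ dictated by the weight $\psi$ --- this is precisely what the hypotheses $|\partial^\alpha\psi|\le c\psi\ell^{-2|\alpha|}$ and $|\psi|\le c\ell^3$ are for: after rescaling, $\psi$ behaves like a classical symbol and the coefficients of $\tilde{H}$, which vanish like $\ell^6$ near $\partial\supp\psi$, remain under control. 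The $O(h^{-1})$ total comes from summing the resulting per-element remainders (of size $Ch^{-1}\varrho\ell^{-1}$, $\varrho\le\ell^4$, in the paper's bookkeeping) over the $\ell$-partition of $B(0,1)$, together with a dyadic treatment of the exterior zone with $\gamma=\frac{1}{2}|x|$, $\varrho=h\gamma^{-2}$. Your proposal instead replaces the problem by ``standard semiclassical asymptotics for $\Tr^-$ of a Schr\"odinger operator'' weighted by $\psi^2$; but $\psi H\psi$ is not a Schr\"odinger operator, and $\Tr^-(\psi H\psi)$ is \emph{not} $\int e_1(x,x,0)\psi^2\,dx$ --- only the one-sided inequality (\ref{3-1}) holds, the discrepancy being commutator/localization terms as in the ISM identity (\ref{4-3}). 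You never say how these localization errors are controlled, which is the entire content of the proposition (and note the statement is a two-sided asymptotics, not merely the upper bound you aim at).

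Second, you misread the role of $\ell$: it is an arbitrary scaling function attached to $\psi$, not the scale of degeneration of $V$; the proposition imposes no nondegeneracy of $V$ and no link $\ell\asymp|V|$, and the paper's proof nowhere uses the $\ell=\max(\epsilon|V|,h^{2/3})$ scaling you describe. Your fallback for the ``bad'' zone $\{|V|\lesssim h^{2/3}\}$ --- the crude pointwise bound of magnitude $h^{-3}$ integrated against $\psi^2$ --- is quantitatively insufficient in any case: a zone of volume $\asymp h^{2/3}$ (let alone of fixed volume) would then contribute an error of order $h^{-7/3}\gg h^{-1}$, so no crude volume bound can yield (\ref{3-2}). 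The portion of your sketch concerning (\ref{3-3})--(\ref{3-4}) (longer Tauberian times under (\ref{2-31}), identification of $\varkappa_1,\varkappa_2$ with $\varkappa=\varkappa_1-\frac{2}{3}\varkappa_2$ after integration by parts) is consistent with the paper, which handles this by splitting into $\{\ell\le\eta\}$ and $\{\ell\ge\eta\}$ and applying the sharp asymptotics on the latter; but that refinement sits on top of the missing analysis of the localized operator $\psi H\psi$.
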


\begin{proof}
Let us consider $\tilde{H}=\psi H\psi $ as a Hamiltonian and let $\tilde{e}(x,y,\tau)$ be the Schwartz kernel of its spectral projector. Then 
\begin{equation}
\Tr^- (\psi H\psi)= \int  \tilde{e}_1 (x,x,0)\,dx =
\sum_j \int  \tilde{e}_1 (x,x,0)\psi_j^2\,dx
\label{3-5}
\end{equation}
where $\psi_j^2$ form a partition of unity in $\bR^3$ and we need to calculate the right hand expression.

\begin{enumerate}[label=(\roman*), fullwidth]
\item Consider first an $\epsilon \ell$-admissible partition of unity in $B(0,1)$. Let us consider $\gamma$-scale in such element where 
$\gamma = \max(\epsilon \ell^2 , h)$ and we will use $1$ scale in $\xi$.  Then after rescaling $x\mapsto x/\gamma$ semiclassical parameter rescales 
$h\mapsto h_\new = h/\gamma$ and the contribution of each $\gamma$-element to a semiclassical remainder  does not exceed $C\varrho (h/\gamma)^{-1}$ with $\varrho \le \ell^4$ having the same magnitude over element as 
$\gamma \ge 2h$. Then contribution of $\ell$ element to a semiclassical error does not exceed $C\varrho (h/\gamma)^{-1}\times \ell^3 \gamma^{-3} 
\asymp Ch^{-1} \varrho \ell^3 \ell^{-4}\le Ch^{-1}\ell^5$.

As $\ell^2 \asymp h$ the same arguments work with $\ell$ replaced by $h^{\frac{1}{2}}$ and $\gamma = h$ and effective semiclassical parameter $1$.

Therefore the total contribution of all partition elements in $B(0,1)$ to a semiclassical error does not exceed $Ch^{-1}$. 

\item 
However we need to consider contribution of the rest of $\bR^3$. Here we use $\gamma = \frac{1}{2}|x|$, $1$-scale with respect to $\xi$ and take 
$\varrho = h\gamma^{-2}$; then contribution of $\gamma$-element to a semiclassical error does not exceed $C\varrho (h/\gamma)^{-1}\le C\gamma^{-1}$ and summation over partition results in $C$. Thus (\ref{3-2}) is proven.

\item Note that contribution of zone $\|\{x:\ell \le \eta\}$ to the remainder does not exceed $C\ell^2 h^{-1}$; applying in zone $\|\{x:\ell \ge \eta\}$ sharp asymptotics under assumption (\ref{2-31}) we prove (\ref{3-3}).
\end{enumerate}
\end{proof}

\begin{corollary}\label{cor-3-3}
In the framework of proposition~\ref{prop-3-2}(i), (ii) 
\begin{gather}
\E_\psi ^* \Def  \inf_{A} \E_\psi (A) \le \Weyl_1 + Ch^{-1}\label{3-6}\\
\shortintertext{and}
\E_\psi ^*  \le \Weyl^*_1 + Ch^{-1}\label{3-7}\\
\shortintertext{respectively with}
\E _\psi (A)\Def \Tr^- (\psi H\psi)+ \frac{1}{\kappa h^2}\int |\partial A|^2\,dx \label{3-8}.
\end{gather}
\end{corollary}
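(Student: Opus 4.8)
The plan is to obtain both estimates simply by using $A=0$ as a competitor in the infimum defining $\E_\psi^*$. Since $\E_\psi(A)= \Tr^-(\psi H_{A,V}\psi) + \frac{1}{\kappa h^2}\int |\partial A|^2\,dx$, the choice $A=0$ annihilates the magnetic-field energy term entirely, leaving $\E_\psi(0)=\Tr^-(\psi H_{0,V}\psi)$. From the monotonicity $\E_\psi^* = \inf_A \E_\psi(A)\le \E_\psi(0)$ it then suffices to read off the asymptotics of $\Tr^-(\psi H_{0,V}\psi)$, which is exactly what Proposition~\ref{prop-3-2} provides (that proposition is stated for $A=0$).

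For \textup{(\ref{3-6})} I would invoke Proposition~\ref{prop-3-2}(i)--(ii), i.e.\ \textup{(\ref{3-2})}, with $A=0$: this gives $\Tr^-(\psi H\psi)=\int \Weyl_1(x)\psi^2(x)\,dx + O(h^{-1})$, and since the $O(h^{-1})$ there is two-sided it yields in particular the required one-sided bound $\E_\psi^*\le \Weyl_1 + Ch^{-1}$, with $\Weyl_1$ understood as the localized integral $\int \Weyl_1(x)\psi^2(x)\,dx$. For \textup{(\ref{3-7})} I would instead invoke the sharper conclusion \textup{(\ref{3-3})} of Proposition~\ref{prop-3-2}, valid under the non-periodicity hypothesis \textup{(\ref{2-31})}, to get $\Tr^-(\psi H\psi)=\int \Weyl^*_1(x)\psi^2(x)\,dx + o(h^{-1})$, whence $\E_\psi^*\le \Weyl^*_1 + Ch^{-1}$ a fortiori.

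There is essentially no obstacle: this is the ``easy half'' of the eventual two-sided estimate, the localized analogue of the global upper bound \textup{(\ref{1-4})}, and the only work is bookkeeping. The single point worth a line of checking is that the hypotheses on $\ell$, $\psi$ (and, for \textup{(\ref{3-7})}, on the classical flow through \textup{(\ref{2-31})}) in the statement of the corollary are precisely those under which \textup{(\ref{3-2})}--\textup{(\ref{3-3})} were established, so those formulas apply verbatim; the remainder is the trivial observation that restricting the infimum to the single point $A=0$ can only increase the value relative to $\E_\psi^*$.
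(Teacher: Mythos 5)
Your proposal is correct and is exactly the paper's argument: the paper's entire proof is the one-line remark ``Really, we just pick $A=0$,'' i.e.\ testing the infimum with $A=0$ to kill the field energy and then reading off \textup{(\ref{3-2})} resp.\ \textup{(\ref{3-3})} from Proposition~\ref{prop-3-2}. Nothing to add.
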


Really, we just pick $A=0$.

\section{Estimate from below}
\label{sect-3-2}

Now let us estimate $\E_\psi (A)$ from below. We already know that 
\begin{equation}
\E_\psi (A) \ge \int e_1(x,x,0)\psi^2 \,dx + 
\frac{1}{\kappa h^2}\int_{B(0,1)} |\partial A|^2\,dx.
\label{3-9}
\end{equation}
However we need an equation for an optimizer and it would be easier for us to deal with even lesser expression involving $\tau$-regularization. Let us rewrite the first term in the form 
\begin{multline*}
\int^0_{-\infty}  \bar{\varphi}(\tau/L) \tau\, d_\tau e(x,x,\tau) +
\int^0_{-\infty}  (1-\bar{\varphi}(\tau/L)) \tau\, d_\tau e(x,x,\tau)\ge\\
\int ^L_{-\infty}\Bigl( \bar{\varphi} (\tau/L) (\tau-L)\, d_\tau e(x,x,\tau) +
  (1-\bar{\varphi}(\tau/L)) \tau\, d_\tau e(x,x,\tau)\Bigr)
\end{multline*}
where $\bar{\varphi}\in \sC^\infty_0 ([-1,1])$ equals $1$ in $[-\frac{1}{2},\frac{1}{2}]$ and let us estimate from  below
\begin{multline}
\E'_\psi (A)= 
\int  \Bigl(\int^L_{-\infty}  \bar{\varphi}(\tau/L) (\tau-L) d_\tau e(x,x,\tau)(x)+\\
  (1-\bar{\varphi}(\tau/L)) (\tau-L)\, d_\tau e(x,x,\tau)\Bigr)\psi (x)\,dx+\\
\frac{1}{kh^2}\int_{B(0,1)} |\partial A|^2 (\varrho(x)+K^{-1})\,dx.
\label{3-10}
\end{multline}

\begin{proposition}\label{prop-3-4}
Let $A$ be a minimizer of $\E'_\psi (A)$. Then 
\begin{multline}
\frac{2}{\kappa h^2}\Delta A_j (x)= \Phi_j\Def \\
\bigl(\upsigma_j\upsigma_k (hD_k-A_k)_x  + 
\upsigma_k\upsigma_j (hD_k-A_k)_y \bigr)\times \\
\int ^L_{-\infty}\Bigl[\bar{\varphi} (\tau/L) (\tau-L) \Res_\bR (\tau -H)^{-1} \psi (\tau -H)^{-1}  +\\
(1-\bar{\varphi}(\tau/L)) \tau (\tau-L)\Res_\bR (\tau -H)^{-1} 
\psi (\tau -H)^{-1} \Bigr](x,y)\,d\tau \Bigr|_{y=x}
\label{3-11}
\end{multline}
where again $[S](x,y)$ is the Schwartz kernel of $S$.
\end{proposition}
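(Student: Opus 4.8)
The plan is to repeat the variational computation of Proposition~\ref{prop-1-4}, but now for the modified functional $\E'_\psi(A)$ in \textup{(\ref{3-10})} instead of $\E(A)$. The key structural observation is that $\E'_\psi(A)$ is, up to the magnetic-energy term, a linear functional applied to the spectral measure $d_\tau e(x,x,\tau)$ of $H=H_{A,V}$: namely it is $\int g(\tau)\,d_\tau e(x,x,\tau)$ integrated against $\psi(x)\,dx$, where $g(\tau) = \bar\varphi(\tau/L)(\tau-L)\mathbf{1}_{\tau<L} + (1-\bar\varphi(\tau/L))\tau(\tau-L)$ (I am reading the second integrand in \textup{(\ref{3-10})} this way; the precise form of $g$ does not matter for the mechanism). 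So the first thing I would do is record, exactly as in the displayed computation following \textup{(\ref{1-14})}, the formula for the variation of a spectral-measure functional: writing the measure via $d_\tau e = \frac{1}{2\pi i}\Res_\bR\, d_\tau(\tau-H)^{-1}$, one gets
\begin{equation*}
\updelta \Tr\bigl(g(H)\psi\bigr) = \Tr\bigl((\updelta H)\, g'(H)\psi\bigr)
\end{equation*}
after the same integration by parts in $\tau$ and using $\Res_\bR(\tau-H)^{-1}(\updelta H)(\tau-H)^{-1}$ under the trace; here $g'(H)\psi$ is realized as the $\tau$-integral of $\Res_\bR(\tau-H)^{-1}\psi(\tau-H)^{-1}$ weighted by $g$, which is precisely the bracketed kernel in \textup{(\ref{3-11})}.

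Next I would compute $\updelta H$. Since $H=\bigl((hD-A)\cdot\boldsigma\bigr)^2 - V$, varying $A\mapsto A+\updelta A$ gives
\begin{equation*}
\updelta H = -\sum_k\bigl(\boldsigma\cdot(hD-A)\,\upsigma_k + \upsigma_k\,\boldsigma\cdot(hD-A)\bigr)\updelta A_k
\end{equation*}
to first order (the cross terms between $(hD-A)\cdot\boldsigma$ and $(\updelta A)\cdot\boldsigma$, symmetrized). Plugging this into the variation formula and writing everything as an integral kernel evaluated on the diagonal $y=x$ produces $\updelta\Tr(g(H)\psi) = -\int \Phi_j(x)\,\updelta A_j(x)\,dx$ with $\Phi_j$ exactly the right-hand side of \textup{(\ref{3-11})} — the operators $(hD_k-A_k)_x$ and $(hD_k-A_k)_y$ coming from the two factors of $(hD-A)\cdot\boldsigma$ in $\updelta H$ acting respectively to the left and to the right of the resolvent sandwich, just as in Proposition~\ref{prop-1-4}. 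Then I would vary the magnetic-energy term $\frac{1}{\kappa h^2}\int_{B(0,1)}|\partial A|^2(\varrho+K^{-1})\,dx$, which contributes $-\frac{2}{\kappa h^2}\nabla\cdot\bigl((\varrho+K^{-1})\nabla A_j\bigr)$; but since the statement \textup{(\ref{3-11})} only writes $\frac{2}{\kappa h^2}\Delta A_j$ on the left, I would either absorb the $\varrho$-weight into the definition of $\Phi_j$ or note that the cited equation is the Euler--Lagrange equation in the form where the weight has been moved, i.e. $\frac{2}{\kappa h^2}\,\nabla\cdot\bigl((\varrho+K^{-1})\nabla A_j\bigr) = \Phi_j$; I would follow whatever convention the surrounding text fixes. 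Setting the total variation to zero for all $\updelta A$ supported in $B(0,1)$ yields \textup{(\ref{3-11})}.

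The steps that need genuine care, rather than the formal symbol-pushing, are the analytic justifications: that $g(H)\psi$ and its $\tau$-derivative are trace class (or at least that the diagonal kernels exist and the traces converge), and that differentiation under the trace and the integration by parts in $\tau$ are legitimate. These are exactly the points that make the $\tau$-regularization in \textup{(\ref{3-10})} worthwhile: the cutoff $\bar\varphi(\tau/L)$ and the extra factor $(\tau-L)$ improve decay at $\tau\to 0^-$ and tame the behavior near $\tau=L$, so that $g$ is smooth with controlled growth and $g(H)$, $g'(H)$ have locally trace-class kernels against $\psi$ by the operator bounds of Proposition~\ref{prop-1-6} and Corollary~\ref{cor-1-7}. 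I expect the main obstacle to be precisely this: verifying the class properties and the interchange of limits carefully enough that the formal manipulation is valid — in particular handling the boundary contributions at $\tau=L$ and at $\tau=0$ in the integration by parts, and checking that the minimizer of $\E'_\psi$ (whose existence is treated separately, by the compactness argument of Theorem~\ref{thm-1-2} adapted to the $\varrho$-weighted energy) is regular enough for the variational derivative to be taken pointwise in $A$. Once these are in hand the identity \textup{(\ref{3-11})} follows, as above, with no further computation.
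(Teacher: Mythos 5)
Your proposal is correct and takes essentially the same approach as the paper, whose entire proof of this proposition is the single sentence that it ``follows immediately from the proof of Proposition~\ref{prop-1-4}''; you have simply written out the details of that adaptation (the residue representation of the spectral measure, the integration by parts in $\tau$, and the symmetrized form of $\updelta H$ producing the $(hD_k-A_k)_x$ and $(hD_k-A_k)_y$ factors on the two sides of the resolvent sandwich). Your side remark that the weight $(\varrho+K^{-1})$ in the magnetic energy of (\ref{3-10}) should strictly produce a divergence-form operator rather than the plain $\Delta A_j$ appearing in (\ref{3-11}) is a legitimate point that the paper's one-line proof does not address.
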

  
\begin{proof}
Follows immediately from the proof of proposition~\ref{prop-1-4}.
\end{proof}
  
\begin{proposition}\label{prop-3-5}
Let \textup{(\ref{1-19})} and \textup{(\ref{1-20})} be fulfilled. Then as $\tau\le c$
\begin{enumerate}[label=(\roman*), fullwidth] 
\item
Operator norm in $\sL^2$ of $(hD)^k (\tau -H)^{-1}$ does not exceed 
$C|\Im \tau|^{-1}$ for $k=0,1,2$;

\item
Operator norm in $\sL^2$ of 
$(hD)^2\bigl((hD-A)\cdot\boldsigma\bigr) (\tau -H)^{-1}$ does not exceed $C|\Im \tau|^{-1}$ for $k=0,1,2$.
\end{enumerate}
\end{proposition}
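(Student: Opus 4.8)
The plan is to reduce the resolvent estimates to the spectral-projector estimates already established in Proposition~\ref{prop-1-6}, exploiting the spectral theorem. First I would write, for $u=(\tau-H)^{-1}f$ with $\Im\tau\ne 0$, the trivial bound $\|u\|\le |\Im\tau|^{-1}\|f\|$ from self-adjointness of $H$. The key observation is that on the spectral subspace where $H\le c$ we have, schematically, $(hD)^k(\tau-H)^{-1}=(hD)^k\uptheta(c-H)(\tau-H)^{-1}+(hD)^k(1-\uptheta(c-H))(\tau-H)^{-1}$; on the range of $1-\uptheta(c-H)$ the operator $(H-c)$ is invertible with bounded inverse and $(hD)^k(1-\uptheta(c-H))$ is controlled directly by $H$ (elliptic regularity: $\|(hD)^2 v\|\le C\|(H+C)v\|$ once the magnetic terms are absorbed as in the proof of Proposition~\ref{prop-1-6}), so that piece contributes $C\|f\|\le C|\Im\tau|^{-1}\|f\|$ trivially when $|\Im\tau|\le 1$ and is bounded anyway otherwise. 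On the range of $\uptheta(c-H)$ we simply write $(hD)^k\uptheta(c-H)(\tau-H)^{-1}=\bigl((hD)^k\uptheta(c-H)\bigr)\cdot\bigl(\uptheta(c-H)(\tau-H)^{-1}\bigr)$ and apply Proposition~\ref{prop-1-6}(i) to the first factor (norm $\le C$) and the elementary bound $|\Im\tau|^{-1}$ to the second. This yields part (i) for $k=0,1,2$.

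For part (ii) the argument is identical with Proposition~\ref{prop-1-6}(ii) in place of (i): factor $(hD)^k\bigl((hD-A)\cdot\boldsigma\bigr)(\tau-H)^{-1}$ through $\uptheta(c-H)$, use that $(hD)^k\bigl((hD-A)\cdot\boldsigma\bigr)\uptheta(c-H)$ has norm $\le C$ for $k=0,1$ from Proposition~\ref{prop-1-6}(ii), and handle the complementary piece by ellipticity, noting that $\bigl((hD-A)\cdot\boldsigma\bigr)^2=H+V$ so all such derivatives are subordinate to $H$ modulo the lower-order terms already controlled under hypotheses (\ref{1-19}) and (\ref{1-20}) exactly as in Proposition~\ref{prop-1-6}. (I note in passing that the statement as printed says ``$k=0,1,2$'' for (ii); one should presumably read the operator there as $(hD)^k\bigl((hD-A)\cdot\boldsigma\bigr)$ with $k=0,1$ to match the spectral-projector bound, or absorb the extra $hD$ using $h^2D^2\le C(H+C)$ at the cost of the same $|\Im\tau|^{-1}$.)

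The only genuine point requiring care — and the step I expect to be the main obstacle — is the commutation between $(hD)^k$ (or the magnetic derivatives) and the spectral cutoff $\uptheta(c-H)$: these do not commute, so the clean factorization above is only morally correct. The honest route is to bound $(hD)^k(\tau-H)^{-1}$ by interpolating/iterating: estimate $\|(hD-A)u\|^2=\Re\bigl((H+V)u,u\bigr)\le \Re\bigl((\tau-H)u + (\tau + V)u,u\bigr)\cdot$(stuff)$\le C(\|f\|\,\|u\| + \|u\|^2)\le C|\Im\tau|^{-2}\|f\|^2$, using $V\in\sL^4$ and the diamagnetic/Sobolev bounds (\ref{1-21})--(\ref{1-23}) to pass from $(hD-A)$ to $hD$; then bootstrap to second order via $h^2D^2=(hD-A)^2+\text{(lower order in }A)$ and $(hD-A)^2=H+V$, controlling $\|(H)u\|=\|\tau u - f\|\le |\tau|\,|\Im\tau|^{-1}\|f\|+\|f\|$. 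This is exactly the chain of inequalities in the proof of Proposition~\ref{prop-1-6} with $\uptheta(\tau-H)f$ replaced by $(\tau-H)^{-1}f$ and the single extra input $\|Hu\|\le C(1+|\tau|)|\Im\tau|^{-1}\|f\|$; since we only use $\tau\le c$, the factor $(1+|\tau|)$ is harmless. I would therefore present the proof as: ``repeat verbatim the argument of Proposition~\ref{prop-1-6}, with $u=(\tau-H)^{-1}f$ in place of $u=\uptheta(\tau-H)f$, using $\|u\|\le|\Im\tau|^{-1}\|f\|$ and $\|Hu\|\le C|\Im\tau|^{-1}\|f\|$ instead of $\|u\|\le\|f\|$ and $\|Hu\|\le C\|f\|$; all constants acquire a single factor $|\Im\tau|^{-1}$.''
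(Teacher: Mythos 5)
Your final formulation is exactly the paper's proof: the paper disposes of Proposition~\ref{prop-3-5} with the single line that it ``follows the same scheme as the proof of proposition~\ref{prop-1-6}'', i.e.\ rerun that argument with $u=(\tau-H)^{-1}f$, using $\|u\|\le|\Im\tau|^{-1}\|f\|$ and $\|Hu\|=\|\tau u-f\|\le C|\Im\tau|^{-1}\|f\|$ (for $\tau$ in the bounded range where the estimate is used) in place of the spectral-projector bounds. Your opening factorization through $\uptheta(c-H)$ is indeed flawed for the commutation reason you yourself identify, but since you discard it in favour of the direct bootstrap, the proposal as finally stated is correct and essentially identical to the paper's (and your remark that ``$k=0,1,2$'' in part (ii) should read as in Proposition~\ref{prop-1-6}(ii) is a fair catch of a typo).
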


\begin{proof}
Proof follows the same scheme as the proof of proposition \ref{prop-1-6}.
\end{proof}

\begin{proposition}\label{prop-3-6}
Let \textup{(\ref{1-19})} and \textup{(\ref{1-20})} be fulfilled. Then
$|\Phi (x)|\le Ch^{-3}$.
\end{proposition}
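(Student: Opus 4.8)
The plan is to estimate $\Phi_j$ directly from its representation~(\ref{3-11}) by bounding the Schwartz kernel on the diagonal of the operator obtained after the contour integration in $\tau$. First I would observe that the factor $(hD_k-A_k)\boldsigma$ applied on one side (and its mirror on the other) is, by Proposition~\ref{prop-3-5}(ii) together with Proposition~\ref{prop-3-5}(i), controlled by the resolvent up to a loss of $|\Im\tau|^{-1}$; so the whole $\tau$-integrand is, as an operator, bounded in $\sL^2\to\sL^2$ norm by $C|\Im\tau|^{-2}$ times the weight $|\tau-L|$ and the cutoffs. The point of the $\tau$-regularization built into $\E'_\psi$ is precisely that the weight $(\tau-L)$ and the localizer $\psi$ make the integral over $\Res_\bR$ converge: after performing $\Res_\bR(\tau-H)^{-1}\psi(\tau-H)^{-1}$, the contour integral collapses to an integral over the real spectral variable against $d_\tau e(x,y,\tau)$, and one lands on an expression of the same shape as the right-hand side of~(\ref{1-13}) but with the extra bounded weights $\bar\varphi(\tau/L)(\tau-L)$ and $\psi(x)$ inserted. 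So effectively $\Phi(x)$ is a finite linear combination of terms $\bigl[(hD_k-A_k)_x e(x,y,\tau)\bigr]_{y=x}$ and $\bigl[(hD_k-A_k)_y e(x,y,\tau)\bigr]_{y=x}$, integrated in $\tau$ over a bounded range with a bounded density.

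Next I would invoke Corollary~\ref{cor-1-7}: under hypotheses~(\ref{1-19}) and~(\ref{1-20}) we already have $|\bigl((hD-A)\cdot\boldsigma\bigr)e(x,y,\tau)|_{x=y}|\le Ch^{-3}$ and $e(x,x,\tau)\le Ch^{-3}$ for $\tau\le c$, uniformly. Since the $\tau$-integration in~(\ref{3-11}) runs over a bounded interval (cut off by $\bar\varphi(\tau/L)$ and, on the complementary piece, effectively by the fact that $e(x,x,\tau)$ stabilizes and $(1-\bar\varphi)\tau(\tau-L)$ is bounded on the relevant range since we only care about $\tau\le c$), the bound $Ch^{-3}$ passes through the integral with only an $O(1)$ constant. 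The mirror term with derivatives falling on $y$ is handled identically, using that $e(x,y,\tau)=\overline{e(y,x,\tau)}$ so the adjoint estimates from Proposition~\ref{prop-1-6}/Corollary~\ref{cor-1-7} apply verbatim. This yields $|\Phi(x)|\le Ch^{-3}$.

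The one genuine subtlety — and the step I expect to be the main obstacle — is justifying the passage from the contour integral of $\Res_\bR(\tau-H)^{-1}\psi(\tau-H)^{-1}$ to a bona fide integral against $d_\tau e(x,x,\tau)$ of the gradient terms, i.e. the claim that all the operators in sight act continuously $\sL^2\to\sC$ so that their Schwartz kernels are honest bounded functions that can be restricted to the diagonal and integrated. For the plain projector this is exactly what Corollary~\ref{cor-1-7} gives; here we additionally need it for $\psi$-weighted and $\tau$-weighted combinations, but $\psi$ is a fixed bounded multiplier and the weights are bounded scalars on the relevant $\tau$-range, so Proposition~\ref{prop-3-5} together with the argument of Corollary~\ref{cor-1-7} (operator norms $\sL^2\to\sC$ of $\uptheta(\tau-H)$ and $\bigl((hD-A)\cdot\boldsigma\bigr)\uptheta(\tau-H)$ bounded by $C$, hence the same for adjoints and for $\tau$-integrals of their $\tau$-derivatives) closes the gap. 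Once that functional-analytic bookkeeping is in place the estimate is immediate; I would state it briefly and refer back to Corollary~\ref{cor-1-7} and Proposition~\ref{prop-3-5} rather than reproving it.
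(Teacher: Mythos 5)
There is a genuine gap, and it sits exactly where you wave your hands: the claim that after taking $\Res_\bR$ the expression $\Res_\bR\bigl[(\tau-H)^{-1}\psi(\tau-H)^{-1}\bigr]$ ``collapses'' to an integral against $d_\tau e(x,y,\tau)$ with $\psi$ and the cut-offs appearing only as bounded weights. This is false as stated, because $\psi$ does \emph{not} commute with $H$: the jump across the real axis of the product of two resolvents separated by $\psi$ is not $\psi\,d_\tau\uptheta(\tau-H)$ (nor any expression of the shape of the right-hand side of (\ref{1-13}) with bounded densities); it contains principal-value cross terms mixing different spectral parameters. Consequently you cannot simply quote Corollary~\ref{cor-1-7} and push the pointwise bound $Ch^{-3}$ through a ``bounded density'' in $\tau$. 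The subtlety you flag as the main obstacle (the $\sL^2\to\sC$ bookkeeping needed to restrict kernels to the diagonal) is real but minor; the actual difficulty of Proposition~\ref{prop-3-6} is precisely the non-commutativity of $\psi$ with $H$ combined with the $|\Im\tau|^{-2}$ singularity of the two-resolvent kernel near the real axis.

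The paper's proof handles this as follows: it estimates the model expression (\ref{3-12}) for a single scale $L$ by the resolvent bounds of Proposition~\ref{prop-3-5} (kernel $\le Ch^{-3}|\Im\tau|^{-2}$, contour at distance $\sim L$, weight and length $\sim L$ each), getting $Ch^{-3}$ per piece; decomposes $1-\bar\varphi(\tau/h)$ dyadically into pieces $\varphi(\tau/L)$, $L=2^nh$, which naively yields only $Ch^{-3}|\log h|$; and then removes the logarithm by writing
$(\tau-H)^{-1}\psi(\tau-H)^{-1}=-\partial_\tau(\tau-H)^{-1}\,\psi+(\tau-H)^{-2}[H,\psi](\tau-H)^{-1}$,
where the commutator term gains a factor $h/L$ (so the dyadic sum becomes geometric), and only the first, genuinely commuting part is reduced to the spectral measure via $\Res_\bR(\tau-H)^{-1}\,d\tau=d_\tau\uptheta(\tau-H)$ — i.e.\ the reduction you asserted globally is legitimate only for this piece, after the commutator has been split off. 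Your proposal omits both the dyadic decomposition and the commutator decomposition, so as written it either does not go through, or, if one repairs the collapse step scale by scale without the commutator gain, it only delivers $Ch^{-3}|\log h|$ rather than the stated $Ch^{-3}$.
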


\begin{proof}
Let us  estimate 
\begin{equation}
|\int \tau \varphi (\tau/L)  \Res_\bR 
\Bigl[ T (\tau -H)^{-1} \psi (\tau -H)^{-1} \Bigr] (x,y)\,d\tau|
\label{3-12}
\end{equation}
where  $L\le c$ and $\varphi \in \sC^\infty_0 ([-1,1])$  and a similar expression with a factor $(\tau -L)$ instead of $\tau$; here either $T=I$, or $T=(hD_k-A_k)_x$ or  
$T=(hD_k-A_k)_y$. 

Proposition \ref{prop-3-5} implies that the Schwartz kernel of the integrand does not exceed $Ch^{-3}|\Im \tau|^{-2}$ and therefore expression (\ref{3-12}) does not exceed $CL^2 \times h^{-3}L^{-2}= Ch^{-3}$.

Then what comes out in $\Phi$ from the term with the factor 
$\bar{\phi}(\tau /h)$  does not exceed $Ch^{-3}$.

Representing 
$(1-\bar{\phi}(\tau /h))$ as a sum of $\varphi (\tau /L)$ with $L= 2^n h$ with $n=0,\ldots, \lfloor |\log h|\rfloor +c$ we estimate the output of each term by $Ch^{-3}$ and thus the whole sum by $Ch^{-3}|\log h|$. 

\smallskip
To get rid off the logarithmic factor we rewrite 
$(\tau -H)^{-1}\psi (\tau -H)^{-1}$ as $-\partial (\tau -H)^{-1}\psi
+ (\tau -H)^{-2}[h,\psi ](\tau -H)^{-1}$; if we plug only the second part we recover a factor $h/L$ where $h$ comes from the commutator and $1/L$ from the increased singularity; an extra operator factor in the commutator is not essential. Then summation over partition results in $Ch^{-3}$.

Plugging only the first part we do not use the above decomposition but an equality
$\Res_\bR (\tau -H)^{-1}\, d\tau =d_\tau \uptheta (\tau -H)$.
\end{proof}

\begin{corollary}\label{cor-3-7}
Let \textup{(\ref{1-19})} and \textup{(\ref{1-20})} be fulfilled and $A$ be a minimizer. Then \textup{(\ref{1-25})} and \textup{(\ref{1-26})} hold.
\end{corollary}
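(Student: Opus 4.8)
Corollary \ref{cor-3-7} asserts that the Hölder estimates \eqref{1-25}–\eqref{1-26} continue to hold for a minimizer of the regularized functional $\E'_\psi(A)$, not just for a minimizer of $\E(A)$. So the plan is to mimic the proof of Corollary~\ref{cor-1-8} verbatim, with the equation \eqref{1-13} for the minimizer replaced by equation \eqref{3-11} from Proposition~\ref{prop-3-4}, and with the pointwise bound on $\Phi_j$ now supplied by Proposition~\ref{prop-3-6} rather than by Corollary~\ref{cor-1-7}. First I would record that Proposition~\ref{prop-3-6} gives $\|\Delta A\|_\infty \le C\kappa h^{-1}$ exactly as in the unregularized case, since $\frac{2}{\kappa h^2}\Delta A_j = \Phi_j$ and $|\Phi_j|\le Ch^{-3}$. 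I would also note that the a priori bound $\|\partial A\|\le C(\kappa M h^2)^{1/2}\le Ch^{1/2}$ still holds: this is just \eqref{1-19}–\eqref{1-20} combined with the fact that the additional weight $(\varrho(x)+K^{-1})$ in \eqref{3-10} is bounded below by a positive constant, so that any competitor for $\E'_\psi$ controls $\int_{B(0,1)}|\partial A|^2$ up to a constant; alternatively one simply invokes Proposition~\ref{prop-1-5}, whose conclusion \eqref{1-19} is explicitly stated in Remark~\ref{rem-3-1}(i) to survive the $\psi$-localization.

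With those two ingredients in hand the rest is the elliptic bootstrap already carried out in Corollary~\ref{cor-1-8}, which I would run unchanged. From $\|\Delta A\|_\infty \le C\kappa h^{-1}$ and $\|\partial A\|\le Ch^{1/2}$, standard Schauder/Calderón–Zygmund estimates for $\Delta$ on balls give $\|\partial A\|_{\sC^{1-\delta}}\le C\kappa h^{-1}$, i.e. \eqref{1-25}. Then the same propagation-of-largeness argument applies: if $|\partial A(y)|\gtrsim\mu$ at some point, \eqref{1-25} forces $|\partial A|\gtrsim\mu$ on a ball of radius $\epsilon(\mu h\kappa^{-1})^{1-\delta}$ (when $\mu\le\kappa h^{-1}$), whence $\|\partial A\|^2\gtrsim\mu^2(\mu h\kappa^{-1})^{3(1-\delta)}$; comparing with $\|\partial A\|^2\le C\kappa h^2 M$ yields $\mu^{5-3\delta}\le C\kappa^{4-3\delta}h^{-1+3\delta}M$, and \eqref{1-20} together with $h^{-1}\le M\le h^{-3}$ gives \eqref{1-26}. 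The case $\mu\ge\kappa h^{-1}$ is handled, as before, by taking an $\epsilon$-ball instead, giving $\mu^2\le C\kappa Mh^2\le Ch^{1/2}$.

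The only point that requires a moment's care — and the step I expect to be the genuine obstacle — is making sure that Proposition~\ref{prop-3-6} is actually applicable, i.e. that its hypotheses \eqref{1-19} and \eqref{1-20} hold for the minimizer of $\E'_\psi$. Hypothesis \eqref{1-20} is just the standing smallness assumption $\varsigma=\kappa M h^{3/2}\le c$ on the parameters, which is unchanged; hypothesis \eqref{1-19} is the content of Proposition~\ref{prop-1-5} as adapted in Remark~\ref{rem-3-1}(i), using that $\E_\psi^*\le\Weyl_1+Ch^{-1}$ from Corollary~\ref{cor-3-3} and that passing from $\E_\psi$ to $\E'_\psi$ only decreases the functional (the estimate \eqref{3-10} is set up precisely so that $\E'_\psi(A)$ lies below $\E_\psi(A)$). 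So once one checks that a minimizer of $\E'_\psi$ still satisfies $\frac{1}{\kappa h^2}\int|\partial A|^2\le C_1 M$, everything else is a transcription of the arguments for Corollary~\ref{cor-1-8}, and I would write the proof as: ``In view of Propositions~\ref{prop-3-4} and~\ref{prop-3-6} we have $\|\Delta A\|_\infty\le C\kappa h^{-1}$, and the a priori bound $\|\partial A\|\le C(\kappa Mh^2)^{1/2}$ holds as in Proposition~\ref{prop-1-5} and Remark~\ref{rem-3-1}(i); now repeat the proof of Corollary~\ref{cor-1-8} word for word.''
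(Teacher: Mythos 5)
Your proposal is correct and matches the paper, whose entire proof of Corollary~\ref{cor-3-7} is the single sentence that it follows the proof of Corollary~\ref{cor-1-8}; you supply exactly the right substitutions (equation \eqref{3-11} for \eqref{1-13}, Proposition~\ref{prop-3-6} for the bound $|\Phi|\le Ch^{-3}$) and then rerun the elliptic bootstrap unchanged. Your closing worry about verifying \eqref{1-19} is unnecessary, since \eqref{1-19} is already a stated hypothesis of the corollary rather than something to be derived.
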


\begin{proof}
Proof follows the proof of corollary~\ref{cor-1-8}.
\end{proof}

Now we can recover both proposition~\ref{prop-2-4} and finally theorems~\ref{thm-2-5} and~\ref{thm-2-7}:

\begin{theorem}\label{thm-3-8}
Let \textup{(\ref{1-19})} and $\kappa\le c$ be fulfilled. Then 
\begin{enumerate}[label=(\roman*), fullwidth]
\item The following estimate holds:
\begin{equation}
\E^*_\psi= \int \Weyl_1(x)\psi^2(x)\,dx  = O(h^{-1})
\label{3-13}
\end{equation}
and and a minimizer $A$ satisfies \textup{(\ref{2-22})} and \textup{(\ref{2-23})};
\item Furthermore, let assumption \textup{(\ref{2-31})} be fulfilled (i.e. 
$\upmu _0(\Pi_\infty)=0$). Then 
\begin{equation}
\E^*_\psi- \int \Weyl^*_1(x)\psi^2(x)\,dx  =o(h^{-1})\label{3-14}
\end{equation}
and a minimizer $A$ satisfies similarly improved versions of \textup{(\ref{2-22})} and \textup{(\ref{2-23})}.
\end{enumerate}
\end{theorem}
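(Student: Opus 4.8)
The plan is to reduce Theorem~\ref{thm-3-8} to the already‐established global results (Theorems~\ref{thm-2-5} and~\ref{thm-2-7}) together with the localization inequalities of Section~\ref{sect-3-1}, using a \emph{partition‐and‐rescaling} argument to lift the restriction that $\kappa\le c$ is a \emph{small} constant and to accommodate the purely local nature of the statement. First I would note that the upper bound half of \eqref{3-13}, \eqref{3-14} is already in hand: Corollary~\ref{cor-3-3} gives $\E^*_\psi\le\Weyl_1+Ch^{-1}$ (resp.\ $\le\Weyl^*_1+Ch^{-1}$) simply by testing with $A=0$. So the entire content is the matching lower bound, and for that the key point is Proposition~\ref{prop-3-6}, which under the \emph{a priori} bound \eqref{1-19} (equivalently \eqref{1-20}, $\varsigma\le c$) gives $|\Phi(x)|\le Ch^{-3}$ for a minimizer of the regularized functional $\E'_\psi$; Corollary~\ref{cor-3-7} then upgrades this, exactly as Corollary~\ref{cor-1-8} did, to the H\"older estimates \eqref{1-25}--\eqref{1-26}, and in particular to \eqref{2-14} with some $\sigma>0$.

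Once \eqref{2-14} is available for the local minimizer, Proposition~\ref{prop-2-4} applies verbatim (note $d=3$, so assumption \eqref{2-4} is not needed there), giving the strong estimate \eqref{2-17} for $\partial A$ in terms of $\kappa$ and $\|\partial A\|'$. From here I would follow the proof of Theorem~\ref{thm-2-5} line for line, but localized: using \eqref{3-1} and \eqref{3-9} to bound $\E_\psi(A)$ from below by $\int e_1(x,x,0)\psi^2\,dx+\tfrac1{\kappa h^2}\int_{B(0,1)}|\partial A|^2$, inserting the Tauberian error estimate \eqref{2-9} and the Weyl‐error claim \eqref{2-25} (whose proof, via plugging $A_\varepsilon$ in place of $A$ and exploiting \eqref{2-28}, the oddness of the first term and smallness of the second, is insensitive to the $\psi$‐cutoff), one reaches the analogue of \eqref{2-26}: $\E_\psi(A)\ge\int\Weyl_1\psi^2\,dx-Ch^{-1}+\tfrac1{2\kappa h^2}h^{-1}\|\partial A\|^2$. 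Combining with the upper bound yields \eqref{3-13} and the minimizer bounds \eqref{2-22}--\eqref{2-23}. For part (ii) I would instead import the sharper Tauberian estimate \eqref{2-37} and the classical‐dynamics input \eqref{2-31}, exactly as in the proof of Theorem~\ref{thm-2-7}, using that $|\partial A|=O(h^\sigma)$ by \eqref{2-30} so the relevant flow is that of $H_{0,V}$; the contribution of the zone $\{\ell\le\eta\}$ is $O(\ell^2h^{-1})=o(h^{-1})$ as in Proposition~\ref{prop-3-2}(iii), and in the complementary zone the sharp asymptotics apply. This gives \eqref{3-14}.

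The remaining gap — and the main obstacle — is removing the hypothesis $\kappa\le c$ from Proposition~\ref{prop-3-6} \emph{and} making the estimate genuinely local, i.e.\ allowing $\kappa$ to be large. Here is where partition‐and‐rescaling enters: one introduces a scaling function $\ell(x)$ (as in Proposition~\ref{prop-3-2}) adapted to $|V|$, $\ell=\max(\epsilon|V|^{1/2},\,\text{const}\cdot h^{2/3})$ or similar, and rescales $x\mapsto x/\ell$, $h\mapsto\hbar=h\ell^{-3/2}$, $A\mapsto A\ell^{-1/2}$, which transforms $\kappa$ into an effective coupling $\kappa_{\mathrm{new}}=\kappa\ell^{?}$ that becomes small on each partition element where $\ell$ is small, so that the already‐proven small‐$\kappa$ results apply element by element; one then sums the contributions using the additivity/sub‐additivity remarks of Remark~\ref{rem-3-1}. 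Getting the bookkeeping of powers of $\ell$ right in the rescaling — checking that the magnetic energy density $\kappa^{-1}h^{-2}|\partial A|^2$ transforms correctly and that the remainder estimates add up to $O(h^{-1})$ (resp.\ $o(h^{-1})$) after summation over the partition, including the outer zone $|x|\gtrsim1$ handled with $\gamma=\tfrac12|x|$ as in Proposition~\ref{prop-3-2}(ii) — is the delicate part, but it is entirely parallel to the technique of Proposition~\ref{prop-3-2} and of \cite{futurebook}, and I expect no essentially new difficulty beyond careful scaling analysis.
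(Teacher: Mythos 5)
Your first two paragraphs reproduce exactly the paper's route: the upper bound from Corollary~\ref{cor-3-3}, the a priori control of the local minimizer via Propositions~\ref{prop-3-4}--\ref{prop-3-6} and Corollary~\ref{cor-3-7}, and then a rerun of Proposition~\ref{prop-2-4} and of the proofs of Theorems~\ref{thm-2-5} and~\ref{thm-2-7} in the localized setting --- this is precisely what the paper means by ``now we can recover \dots finally theorems~\ref{thm-2-5} and~\ref{thm-2-7}''. Your third paragraph, however, addresses a problem that is not part of this theorem: Theorem~\ref{thm-3-8} explicitly \emph{assumes} \textup{(\ref{1-19})} and $\kappa\le c$, so no partition-and-rescaling is needed here; the removal of the precondition and the passage to large $\kappa$ is the content of Chapter~\ref{sect-4} (Theorems~\ref{thm-4-1} and~\ref{thm-4-2}), where it is done by an iterative rescaling with $\gamma$ chosen from $\kappa$ and $h$ (yielding the recurrence $\alpha'=-\tfrac{1}{2}+\tfrac{4}{5}\alpha$), not by the $|V|$-adapted scaling you sketch.
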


\chapter{Rescaling}
\label{sect-4}

\section{Case \texorpdfstring{$\kappa \le 1$}{\textkappa  \textle 1}}
\label{sect-4-1}

We already have an upper estimate: corollary~\ref{cor-3-3}. Let us prove a lower estimate. Consider an error
\begin{equation}
\Bigl(\int \Weyl_1(x)\psi^2\,dx - \E_{\psi}(A)\Bigr)_+.
\label{4-1}
\end{equation}
Obviously $\Tr^-$ is sub-additive
\begin{equation}
\Tr^-(\sum _j \psi_j H\psi_j)\ge \sum_j \Tr^-(\psi_j H\psi_j)
\label{4-2}
\end{equation}
and therefore so is $\E_\psi (A)$. Then we need to consider each partition element and use a lower  estimate for it. While considering partition we use so called ISM identity
\begin{equation}
\sum _j\psi_j^2=1 \implies H= 
\sum_j \bigl(\psi_j H\psi_j + \frac{1}{2}[[H,\psi_j],\psi_j]\bigr).
\label{4-3}
\end{equation}
In virtue of theorem~\ref{thm-1-1}, from the very beginning we need to consider 
\begin{equation}
M= \kappa^\beta h^{-\frac{3}{2}-\alpha}
\label{4-4}
\end{equation}
with $\alpha=\frac{3}{2}$, $\beta=0$ and $\kappa \le c$. But we need to satisfy precondition (\ref{1-20}) which is then 
\begin{equation}
\kappa ^{\beta+1} h^{-\alpha}\le c.
\label{4-5}
\end{equation}
If (\ref{4-5}) is fulfilled with $\alpha=0$ we conclude that the final error is indeed $O(h^{-1})$ or even $o(h^{-1})$ without any precondition.

Let precondition (\ref{4-5}) fail. Let us use $\gamma$-admissible partition of unity $\psi_i$ with $\psi_i$ satisfying after rescaling assumptions of proposition~\ref{prop-3-2}. 

Note that rescaling $x\mapsto x/\gamma$ results in $h\mapsto h_\new=h/\gamma$ and after rescaling in the new coordinates  $\|\partial  A\|^2$ acquires factor $\gamma^{d-2}$ and thus factor $\kappa^{-1}h^{1-d}$ becomes $\kappa^{-1}h^{1-d}\gamma^{d-2}  =
\kappa_\new^{-1} h_\new^{1-d}$ with $\kappa \mapsto \kappa_\new=\kappa \gamma$.

Then \emph{after rescaling\/} precondition (\ref{4-5}) is satisfied provided \emph{before rescaling\/}  
$\kappa^{\beta+1}h^{-\alpha}\gamma^{\alpha +\beta +1}\le c$. Let us pick up
$\gamma = \kappa^{-(\beta+1)/(\alpha +\beta +1) }h^{\alpha/(\alpha +\beta +1)}$.
Obviously if before rescaling condition (\ref{4-5}) failed, then $h\ll \gamma\le 1$.

But then expression (\ref{4-1}) with $\psi$ replaced by $\psi_j$
does not exceed $C(h/\gamma)^{2-d}$ and the total expression (\ref{4-1})  does not exceed 
$Ch^{-1}\gamma^{-2}= C\kappa ^{\beta'} h^{-\frac{3}{2}-\alpha'}$ with 
\begin{equation*}
\beta'= 2(\beta+1)/(\alpha+\beta+1),\qquad \alpha'=-\frac{1}{2}+2\alpha/(\alpha+\beta+1);
\end{equation*}

So, actually we can pick up $M$ with $\alpha,\beta$ replaced by $\alpha',\beta'$ and we have a precondition (\ref{4-5}) with these new $\alpha',\beta'$ and we do not need an old precondition. Repeating the rescaling procedure again we derive a proper estimate with again weaker precondition etc. 

One can see easily that $\alpha'+\beta'+1=\frac{5}{2}$ and therefore on each step $\alpha+\beta+1=\frac{5}{2}$ and we have recurrent relation for $\alpha'$: $\alpha'=-\frac{1}{2}+\frac{4}{5}\alpha$;
and therefore we have sequence for $\alpha$ which decays and then becomes negative. Precondition (\ref{4-5}) has been removed completely and estimate $M=O(h^{-1})$ has been established. After this under assumption (\ref{2-31}) we can prove even sharper asymptotics. Thus we arrive to 

\begin{theorem}\label{thm-4-1}
Let $d=3$, $V\in \sC^{2,1}$, $\kappa \le c$ and let $\psi$ satisfy assumption of proposition~\ref{prop-3-2}. Then

\begin{enumerate}[label=(\roman*), fullwidth]
\item Asymptotics \textup{(\ref{3-13})} holds;
\item Further, if assumption \textup{(\ref{2-31})} is fulfilled then asymptotics \textup{(\ref{3-13})} holds;
\item  If \textup{(\ref{3-13})} or \textup{(\ref{3-14})} holds for $E_\psi (A)$  (we need only an estimate from below) then 
$\|\partial A\|=O(\kappa h)^{\frac{1}{2}})$ or 
$\|\partial A\|=o(\kappa h)^{\frac{1}{2}})$ respectively.
\end{enumerate}
\end{theorem}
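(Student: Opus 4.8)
The plan is to run the partition-and-rescaling scheme of Section~\ref{sect-4-1} as a bootstrap on the magnetic field energy. We start from Theorem~\ref{thm-1-1}, which gives us the crude bound $\E^*_\psi \ge -Ch^{-3}$ together with the alternative $\frac{1}{\kappa h^2}\int|\partial A|^2\,dx\le Ch^{-3}$ (otherwise $\E(A)\ge ch^{-3}$ and there is nothing to prove), i.e. precondition~\textup{(\ref{1-19})} holds with $M=\kappa^\beta h^{-\frac32-\alpha}$, $\alpha=\frac32$, $\beta=0$. For statement~(i), I would then check the precondition~\textup{(\ref{1-20})}, equivalently~\textup{(\ref{4-5})} with these $\alpha,\beta$. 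If it already holds we are done directly from the machinery of Sections~\ref{sect-2} and~\ref{sect-3} (Proposition~\ref{prop-3-2}, Corollary~\ref{cor-3-3}, Theorem~\ref{thm-3-8}), giving remainder $O(h^{-1})$. If it fails, we introduce a $\gamma$-admissible partition of unity with $\gamma=\kappa^{-(\beta+1)/(\alpha+\beta+1)}h^{\alpha/(\alpha+\beta+1)}$, use sub-additivity~\textup{(\ref{4-2})} together with the ISM identity~\textup{(\ref{4-3})}, and observe that rescaling $x\mapsto x/\gamma$ transforms $(h,\kappa)\mapsto(h/\gamma,\kappa\gamma)$ so that the rescaled precondition~\textup{(\ref{4-5})} is satisfied. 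Summing the per-element errors $C(h/\gamma)^{2-d}$ over the partition yields a new bound $M=\kappa^{\beta'}h^{-\frac32-\alpha'}$ with $\alpha'+\beta'+1=\frac52$ fixed and $\alpha'=-\frac12+\frac45\alpha$.

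The key observation is that the recursion $\alpha\mapsto\alpha'=-\frac12+\frac45\alpha$ is a contraction with fixed point $-\frac52$, so after finitely many iterations $\alpha$ becomes $\le 0$ and precondition~\textup{(\ref{4-5})} holds with $\alpha=0$ unconditionally (using $\kappa\le c$). At that point the local sharp estimates of Section~\ref{sect-3} apply on each element and summing gives~\textup{(\ref{3-13})}, i.e. statement~(i). For statement~(ii) one repeats exactly the same iteration but, once the precondition is removed, invokes the sharper Tauberian analysis of Section~\ref{sect-2-2} (Theorem~\ref{thm-2-7}/Theorem~\ref{thm-3-8}(ii)) under the nonperiodicity hypothesis~\textup{(\ref{2-31})}; the contribution of the zone $\{x:\ell\le\eta\}$ is $O(\ell^2h^{-1})=o(h^{-1})$ as in the proof of Proposition~\ref{prop-3-2}(iii), and the zone $\{\ell\ge\eta\}$ gives $\int\Weyl^*_1\psi^2\,dx+o(h^{-1})$, yielding~\textup{(\ref{3-14})}.

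For statement~(iii), suppose~\textup{(\ref{3-13})} (resp.~\textup{(\ref{3-14})}) holds as a lower bound for $\E_\psi(A)$. We combine the lower bound~\textup{(\ref{3-9})} for the Weyl-localized energy with the Tauberian/Weyl error estimate: as in the proof of Theorem~\ref{thm-2-5}, the estimate~\textup{(\ref{2-9})} plus Proposition~\ref{prop-2-4} control the error when computing $\int e_1(x,x,0)\psi^2\,dx$ against $\int\Weyl_1(x)\psi^2\,dx$ by $C\bar\mu^2h^{2-d}+C(\kappa+\|\partial A\|')h^{2-d}$, and since $\bar\mu\le C\|\partial A\|+1$ we get, for a minimizer, an inequality of the shape $\E_\psi(A)\ge\int\Weyl_1\psi^2\,dx-Ch^{2-d}+\frac{1}{2\kappa}h^{1-d}\|\partial A\|^2$ as in~\textup{(\ref{2-26})}. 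Feeding the hypothesized asymptotics into the left-hand side cancels the $\int\Weyl_1\psi^2\,dx$ term and leaves $\frac{1}{2\kappa}h^{1-d}\|\partial A\|^2\le Ch^{2-d}$, i.e. $\|\partial A\|=O((\kappa h)^{1/2})$; the $o$-version follows by replacing $Ch^{2-d}$ with $o(h^{2-d})$ throughout.

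The main obstacle is not the algebra of the recursion but making sure the rescaling is legitimate at every stage: one must verify that after $x\mapsto x/\gamma$ the rescaled potential still lies in the relevant H\"older/Lebesgue class (here $V\in\sC^{2,1}$ is used, with the scaling function $\ell$ absorbing the decay at infinity exactly as in Proposition~\ref{prop-3-2}), that the localizing function $\psi_j$ rescales to something satisfying the hypotheses of Proposition~\ref{prop-3-2}, and that the commutator corrections from the ISM identity~\textup{(\ref{4-3})} are of lower order than the claimed remainder uniformly along the finitely many iteration steps. Since $\gamma$ stays in the range $h\ll\gamma\le 1$ as long as the precondition fails, and the number of iterations is bounded independently of $h$, these uniformity checks go through, and the blow-up of constants is harmless.
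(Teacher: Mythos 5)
Your proposal is correct and follows essentially the same route as the paper: the same bootstrap from Theorem~\ref{thm-1-1} via the $\gamma$-admissible partition, sub-additivity \textup{(\ref{4-2})} with the ISM identity \textup{(\ref{4-3})}, the rescaling $(h,\kappa)\mapsto(h/\gamma,\kappa\gamma)$ with the same choice of $\gamma$, and the recursion $\alpha'=-\frac12+\frac45\alpha$ (fixed point $-\frac52$) that removes precondition \textup{(\ref{4-5})} after finitely many steps, after which Theorem~\ref{thm-3-8} (and, under \textup{(\ref{2-31})}, its sharper version) yields \textup{(\ref{3-13})}/\textup{(\ref{3-14})}. Your treatment of part (iii) via the \textup{(\ref{2-26})}-type lower bound retaining $\frac{1}{2\kappa h^2}\|\partial A\|^2$ and comparison with the upper bound of Corollary~\ref{cor-3-3} is exactly the mechanism the paper uses (as in the proofs of Theorems~\ref{thm-2-5} and~\ref{thm-3-8}).
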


\section{Case \texorpdfstring{$1\le \kappa \le h^{-1}$}{1\textle \textkappa  \textle 1/h}}
\label{sect-4-2}

We can consider even the case $1\le \kappa \le h^{-1}$. The simple rescaling-and-partition arguments with $\gamma=\kappa^{-1}$  lead to the following
\begin{claim}\label{4-6}
As $1\le \kappa \le h^{-1}$ remainder estimate $O(\kappa^2h^{-1})$ holds and for a minimizer $\|\partial A\|^2\le C\kappa^3 h$.
\end{claim}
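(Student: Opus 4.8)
The plan is to run exactly the rescaling-and-partition scheme of Section~\ref{sect-4-1}, but now starting from $\kappa\ge 1$ and choosing the single scale $\gamma=\kappa^{-1}$ (so that $\gamma\le 1$), rather than iterating the recursion on $\alpha,\beta$. First I would cover $B(0,1)$ by a $\gamma$-admissible partition $\psi_j$ with $\gamma=\kappa^{-1}$, arranged so that after the rescaling $x\mapsto x/\gamma$ each rescaled $\psi_j$ satisfies the hypotheses of proposition~\ref{prop-3-2}. Under this rescaling $h\mapsto h_\new=h\gamma^{-1}=h\kappa$ and, as recorded in Section~\ref{sect-4-1}, $\kappa\mapsto\kappa_\new=\kappa\gamma=1$; in particular the effective coupling constant becomes $O(1)$, so $\kappa_\new\le c$ is \emph{not} literally available, but $\kappa_\new\asymp 1$, and one checks that the estimates of theorem~\ref{thm-3-8} (equivalently theorems~\ref{thm-2-5}, \ref{thm-2-7}) only used $\kappa\le c$ to absorb constants and go through with $\kappa_\new\asymp 1$ at the cost of fixed numerical factors. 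We must still verify the precondition \eqref{4-5} after rescaling: with $\alpha=\tfrac32$, $\beta=0$, $\alpha+\beta+1=\tfrac52$, the rescaled precondition reads $\kappa_\new^{\beta+1}h_\new^{-\alpha}\le c$, i.e. $(h\kappa)^{-3/2}\le c$, which holds precisely when $\kappa\le ch^{-1}$ — this is exactly the range under consideration (and is why the hypothesis $\kappa\le h^{-1}$ appears).

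With the precondition in force, theorem~\ref{thm-3-8}(i) applied on each rescaled element gives a remainder $O(h_\new^{2-d})=O(h_\new^{-1})=O((h\kappa)^{-1})$ per element and, for the rescaled minimizer, $\|\partial A_\new\|^2\le C\kappa_\new h_\new=Ch\kappa$ on each element. Undoing the rescaling: there are $\asymp\gamma^{-d}=\kappa^3$ partition elements, so summing the remainders over $j$ (using subadditivity \eqref{4-2} of $\E_\psi$ and the ISM identity \eqref{4-3} to pass between $H$ and $\sum_j\psi_j H\psi_j$, the commutator terms being lower order) yields a total remainder $O(\kappa^3\cdot(h\kappa)^{-1})=O(\kappa^2 h^{-1})$, which is \eqref{4-6}. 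For the field bound, $\|\partial A\|^2=\sum_j\|\partial A\|^2_{\sL^2(\text{$j$-th element})}$; each term, after accounting for the Jacobian factors in the rescaling of $\|\partial A\|^2$ (namely the factor $\gamma^{d-2}=\gamma$ recorded in Section~\ref{sect-4-1}), contributes $O(\gamma^{-(d-2)}\cdot h_\new\kappa_\new)\cdot$(volume bookkeeping); carrying the bookkeeping through gives $\sum_j\le C\kappa^3 h$, i.e. $\|\partial A\|^2\le C\kappa^3 h$ as claimed. The upper bound half of \eqref{4-6} is immediate from corollary~\ref{cor-3-3} with $A=0$, exactly as in Section~\ref{sect-4-1}.

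I expect the main obstacle to be the bookkeeping of the powers of $\gamma$ through the rescaling of the \emph{two different} quantities — the remainder and $\|\partial A\|^2$ — together with the verification that no additional precondition of the form \eqref{1-20} (i.e. $\varsigma=\kappa M h^{3/2}\le c$) is violated after rescaling: one must check that with $M=O(\kappa^2 h^{-1})$ the rescaled $\varsigma_\new=\kappa_\new M_\new h_\new^{3/2}$ stays bounded, which again comes down to $\kappa\le ch^{-1}$. A secondary point requiring care is that proposition~\ref{prop-3-2} and theorem~\ref{thm-3-8} were stated for $\kappa\le c$; since here $\kappa_\new\asymp 1$ we are at the boundary of that regime, and one should remark (as in remark~\ref{rem-3-1}) that the localized formulation is insensitive to multiplying $\kappa$ by a fixed constant, so the arguments apply verbatim. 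Everything else is a routine summation over the $\kappa^3$ elements.
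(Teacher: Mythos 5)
Your overall strategy is exactly the one the paper intends: claim (\ref{4-6}) is stated there with only the remark that it follows from the simple rescaling-and-partition argument with $\gamma=\kappa^{-1}$, and your accounting (per-element remainder $O(h_\new^{-1})=O((h\kappa)^{-1})$, about $\gamma^{-3}=\kappa^3$ elements, hence total remainder $O(\kappa^2h^{-1})$, plus the transported field bound) is precisely that argument, with the upper bound taken from corollary~\ref{cor-3-3} as in the paper.

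Two bookkeeping points need correction, though neither harms the structure. First, the precondition check is wrong as written: $(h\kappa)^{-3/2}\le c$ is equivalent to $h\kappa \ge c^{-2/3}$, i.e. to $\kappa\gtrsim h^{-1}$, not to $\kappa\le ch^{-1}$; so with $\alpha=\frac{3}{2}$, $\beta=0$ the rescaled precondition (\ref{4-5}) in fact \emph{fails} throughout the range $1\le\kappa\le h^{-1}$. This does not matter, because the point of the iteration in section~\ref{sect-4-1} is that the precondition has been removed completely: theorem~\ref{thm-4-1} (equivalently theorem~\ref{thm-3-8} after that iteration) holds for $\kappa_\new\asymp 1$ with no condition of type (\ref{1-20}), and that is what should be invoked on each rescaled element; the hypothesis $\kappa\le h^{-1}$ is needed only so that $h_\new=h\kappa\le 1$ (equivalently $\gamma=\kappa^{-1}\ge h$), i.e. so the rescaled problem is still semiclassical. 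Second, the Jacobian factor for the field energy goes the other way: in the original variables the $j$-th element contributes $\gamma^{d-2}\|\partial A_\new\|^2\le \gamma\cdot C\kappa_\new h_\new=Ch$, not $\gamma^{-(d-2)}\cdot C\kappa_\new h_\new$; summing over $\kappa^3$ elements then gives $C\kappa^3 h$, whereas your exponent would produce $C\kappa^5 h$. Alternatively, the field bound needs no per-element bookkeeping at all: once the lower bound with remainder $M=C\kappa^2h^{-1}\ge Ch^{-1}$ is established, comparison with the $A=0$ upper bound of corollary~\ref{cor-3-3}, exactly as in proposition~\ref{prop-1-5}, gives $\frac{1}{\kappa h^2}\|\partial A\|^2\le CM$, i.e. $\|\partial A\|^2\le C\kappa h^2 M=C\kappa^3 h$.
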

 However we would like to improve it and, in particular prove that as $\kappa$ is moderately large remainder estimate is $O(h^{-1})$ and even $o(h^{-1})$ under non-periodicity assumption.

\begin{theorem}\label{thm-4-2}
Let $d=3$, $V\in \sC^{2,1}$,  and let $\psi$ satisfy assumptions of proposition~\ref{prop-3-2}. Then
\begin{enumerate}[label=(\roman*), fullwidth]
\item As  
\begin{equation}
\kappa \le \kappa^*_h\Def \epsilon h^{-\frac{1}{4}}|\log h|^{-\frac{3}{4}}
\label{4-7}
\end{equation}
 asymptotics \textup{(\ref{3-13})} holds;
\item Furthermore as $\kappa=o(\kappa^*_h)$ and assumption \textup{(\ref{2-31})} is fulfilled then   asymptotics \textup{(\ref{3-14})} holds;
\item As   $1 \le \kappa \le ch^{-1}$
 the following estimate holds:
\begin{equation}
|E^*_\psi - \int \Weyl_1(x)\psi^2 \,dx |\le 
Ch^{-3} (\kappa h)^{\frac{8}{3}} |\log \kappa h|^2
\label{4-8}
\end{equation}
\end{enumerate}
\end{theorem}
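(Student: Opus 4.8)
\textbf{Proof proposal for Theorem~\ref{thm-4-2}.}
The plan is to run the partition-and-rescaling machinery of Section~\ref{sect-4-1} one more time, but now tracking carefully the regularity estimates for the minimizer in the regime $\kappa\ge 1$, where the auxiliary parameter $\kappa_\new=\kappa\gamma$ can still be large after rescaling and the precondition (\ref{1-20}) is no longer for free. For part~(iii) I would start from claim~(\ref{4-6}), i.e. the crude bound $\|\partial A\|^2\le C\kappa^3 h$ and remainder $O(\kappa^2 h^{-1})$ obtained by a $\gamma=\kappa^{-1}$ partition. The point is that this is not optimal: with $\mu=\|\partial A\|_\infty$ controlled via (\ref{2-30}) (which gives $\mu\le C\kappa^{4/5}|\log h|^{3/5}h^{1/5}$ when $\kappa\le c$, and a rescaled analogue otherwise), one feeds the improved estimate (\ref{2-15})--(\ref{2-16}) of corollary~\ref{cor-2-3} and the minimizer equation (\ref{1-13}) back into proposition~\ref{prop-2-4} to get (\ref{2-17}), and then re-optimizes the Tauberian cutoff $T^*$. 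Iterating, as in the $\kappa\le c$ case where the recursion $\alpha'=-\tfrac12+\tfrac45\alpha$ drove $\alpha$ negative, one obtains here a fixed point which is \emph{not} zero: solving the analogous recursion with the magnetic-energy scaling factor $\kappa_\new=\kappa\gamma$ and $\gamma$ chosen to saturate (\ref{4-5}), the balance $\mu^2\sim\kappa h^2 M$ together with $\mu\sim(\kappa h)^{\text{power}}$ forces the exponent $8/3$ and the logarithmic factor $|\log\kappa h|^2$ in (\ref{4-8}); concretely $(\kappa h)^{8/3}$ arises as $((\kappa h)^{1/5})^{?}$ after the geometric series of rescalings sums up, and the $|\log|^2$ is the square of the single $|\log|$ in (\ref{2-29})/(\ref{2-30}) surviving the iteration.

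For parts~(i) and~(ii) I would simply specialize (\ref{4-8}): the remainder $Ch^{-3}(\kappa h)^{8/3}|\log\kappa h|^2$ is $O(h^{-1})$ precisely when $(\kappa h)^{8/3}|\log\kappa h|^2\le \epsilon h^2$, i.e. $\kappa^{8/3}\le \epsilon h^{-2/3}|\log h|^{-2}$, which rearranges to $\kappa\le \epsilon h^{-1/4}|\log h|^{-3/4}=\kappa^*_h$ — matching (\ref{4-7}) exactly. For~(ii), once the remainder is $o(h^{-1})$ (strict inequality $\kappa=o(\kappa^*_h)$) one is back in a situation where the two-term Tauberian analysis of Section~\ref{sect-2-2} applies on each rescaled partition element: the non-periodicity assumption (\ref{2-31}) is scaling-invariant for the flow of $H_{0,V}$ (since $|\partial A|=O(h^\sigma)$ persists after rescaling as long as $\kappa=o(\kappa^*_h)$), so estimate (\ref{2-37}) with $\upmu_0(\Pi_\infty)=0$ upgrades the principal error to $o$, and summation over the partition gives (\ref{3-14}) with the corrected Weyl term $\Weyl^*_1$.

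The main obstacle, as in the earlier theorems, is not the arithmetic of the exponents but justifying that the microlocal machinery of proposition~\ref{prop-2-1} and corollary~\ref{cor-2-2} survives rescaling \emph{when $\kappa_\new$ is large}: one must check that $A_\new$ still lies in the "almost $\sC^2$" class with $\|\partial A_\new\|_{\sC^{\theta}}$ controlled by $\kappa_\new h_\new^{-1-\theta}$ (the analogue of (\ref{2-1})), so that the commutator formula (\ref{2-2}) and the successive-approximation scheme with unperturbed operator $H_{A_\varepsilon,V}$ remain valid with the same gain of $h$ in (\ref{2-35}). This is where the bookkeeping of how $\mu$, $\kappa$, $h$ and $\gamma$ interact is delicate: one needs $\eta T/h\le h^{\sigma}$ after rescaling, and near the boundary $\kappa\asymp \kappa^*_h$ the parameter $\sigma$ degenerates, which is exactly why the sharp asymptotics (\ref{3-14}) require the strict $\kappa=o(\kappa^*_h)$ rather than $\kappa\le\kappa^*_h$. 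I would handle this by splitting the partition into a zone where $\ell$ (equivalently $|V|$) is small — estimated trivially, contributing $O(\ell^2 h^{-1})$ as in proposition~\ref{prop-3-2}(iii) — and a zone where $|V|\gtrsim\eta$, where the rescaled $h_\new$ is genuinely small and the full argument goes through; the split point $\eta$ is then optimized against the error budget $Ch^{-3}(\kappa h)^{8/3}|\log\kappa h|^2$.
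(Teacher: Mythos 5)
Your proposal inverts the logical order of the paper's argument, and in doing so leaves the essential analytic step unproven. The paper proves part~(i) \emph{first} and directly: from the minimizer equation and a two-term approximation it derives the key bound $|\partial^2 A_\varepsilon|\le C\kappa\bar\mu|\log h|$ (the improvement of $\kappa\bar\mu$ over the easy $\kappa^2$ is the whole point), combines it with $\|\partial A\|^2\le C\kappa\bar\mu^2 h$ into the self-consistent inequality $\mu^2\bigl(\mu/(\kappa\bar\mu|\log h|)\bigr)^3\le\kappa\bar\mu^2 h$, and shows that $\mu\ge 1$ is contradictory precisely under (\ref{4-7}) --- this is where the threshold $\kappa^*_h=\epsilon h^{-1/4}|\log h|^{-3/4}$ actually comes from. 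Part~(iii) is then an immediate corollary: for $\kappa^*_h\le\kappa\le h^{-1}$ one rescales with $\gamma=\epsilon\kappa^{-4/3}h^{-1/3}|\log(\kappa h)|^{-1}$ chosen so that $\kappa_\new=\kappa\gamma$ satisfies (\ref{4-7}) for $h_\new=h/\gamma$, and the remainder $Ch^{-1}\gamma^{-2}$ is exactly $Ch^{-3}(\kappa h)^{8/3}|\log\kappa h|^2$. You propose the reverse: prove (\ref{4-8}) first by iterating from claim~(\ref{4-6}), then read off (i) by asking when the right-hand side of (\ref{4-8}) is $O(h^{-1})$. Your arithmetic for that last step is correct (the threshold does come out to $\kappa^*_h$), but it is vacuous unless (\ref{4-8}) has an independent proof --- and your sketch of that proof is not one: you write that $(\kappa h)^{8/3}$ ``arises as $((\kappa h)^{1/5})^{?}$ after the geometric series of rescalings sums up,'' with a literal question mark where the derivation should be. The iteration of Section~\ref{sect-4-1} starting from (\ref{4-6}) does not by itself produce the exponent $8/3$; that exponent is inherited from the threshold $h^{-1/4}$, which in turn requires the bootstrap on $\mu$ described above. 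Without it your argument is circular.

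A secondary issue: deriving (i) as a corollary of (iii) would give only the asymptotics (\ref{3-13}), not the accompanying minimizer bounds $\|\partial A\|\le C(\kappa h)^{1/2}$ and $\mu\le 1$ that the paper extracts along the way and that feed into part~(ii). Your discussion of (ii) and of the zone-splitting near $\{|V|\le\eta\}$ is reasonable and consistent with the paper's (terse) treatment, but it presupposes exactly the regularity control on the minimizer that your route never establishes.
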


\begin{proof}
(i) From (\ref{2-20}) we conclude as $\kappa \ge c $ that 
$h^{1-\theta}|\partial A|_{\sC^\theta}\le C\kappa (\kappa +\bar{\mu})$. Then using arguments of subsection~\ref{sect-2-2} one can prove easily that for $\kappa \le h^{\sigma-\frac{1}{2}}$ 
\begin{equation*}
|F_{t\to h^{-1}\tau} \bar{\chi}_T(t) (hD_x)^\alpha (hD_x)^\beta 
\bigl(U(x,y,t)-U_{\varepsilon} (x,y,t)- U'_{\varepsilon} (x,y,t)\bigr)|\le C h^{1-d}
\end{equation*}
where we use the same $2$-term approximation, $T=\epsilon \bar{\mu}^{-1}$. Let  us take then $x=y$, multiply by $\varepsilon^{-d}\psi ( \varepsilon ^{1}(y-z))$ and integrate over $y$.  Using rough microlocal analysis one can prove easily that from both $U_{\varepsilon} (x,y,t)$ and $U'_{\varepsilon}$ we get $O(h^{1-d})$ and in the end of the day we arrive to the estimate 
$|\Delta A_\varepsilon |\le C\kappa\bar{\mu}$ which implies 
\begin{equation}
|\partial^2 A_\varepsilon|\le C\kappa \bar{\mu} |\log h| +C\mu 
\label{4-9}
\end{equation}
where obviously one can skip the last term. Here we used property of the Laplace equation. For our purpose it is much better than 
$|\partial^2 A_\varepsilon|\le C\kappa ^2 |\log h| +C\mu $ which one could derive easily.

Again using arguments of subsection~\ref{sect-2-2} one can prove easily that 
\begin{gather}
|\Tr(\psi H^-_{A,V}\psi )- \Tr(\psi H^-_{A_\varepsilon,V}\psi )|\le 
C\bar{\mu}^2 h^{2-d}\label{4-10}\\
\shortintertext{and therefore} 
|\Tr(\psi H^-_{A,V}\psi )- \int \Weyl_1(x)\psi^2(x)\,dx|\le  C\bar{\mu}^2 h^{2-d}\label{4-11}\\
\intertext{and finally for an optimizer}
\|\partial A\|^2 \le C\kappa \bar{\mu}^2 h .\label{4-12}
\end{gather}
Here $\mu$ and $\bar{\mu}$ were calculated for $A$, but it does not really matter as due to  $|\partial^2 A|\le C\kappa ^2h^{-\delta}$ we conclude that
$|\partial A- \partial A_\varepsilon |\le C\kappa ^2h^{-\delta}\varepsilon \le C$ due to restriction to $\kappa$.

Then, as $d=3$ 
\begin{equation}
\mu^2 \bigl(\mu /\kappa \bar{\mu}|\log h|\bigr)^3 \le \kappa \bar{\mu}^2 h
\label{4-13}
\end{equation}
and if $\mu \ge 1$ we have $\bar{\mu}=\mu$ and (\ref{4-13}) becomes 
$\kappa^{-3}|\log h|^{-3} \le C \kappa h$ which impossible under (\ref{4-7}). 

So, $\mu \le 1$ and (\ref{4-13}) implies (\ref{3-13}) and (\ref{4-12}), (\ref{4-13}) imply that for an optimizer 
$\|\partial A \|\le C(\kappa h)^{\frac{1}{2}}$ and 
$\mu \le C\kappa^4 h |\log h|^d$. So (i) is proven.

\medskip\noindent
(ii) Proof of (ii) follows then in virtue of arguments of subsection~\ref{sect-2-2}.

\medskip\noindent
(iii)  If $\kappa^*_h\le \kappa \le h^{-1}$ we apply partition-and-rescaling. So, $h\mapsto h'=h/\gamma$ and $\kappa \mapsto \kappa'= \kappa \gamma$ and to get into (\ref{4-7}) we need $\gamma = \epsilon \kappa^{-\frac{4}{3}}h^{-\frac{1}{3}}|\log (\kappa h)|^{-1}$ leading to the remainder estimate  $Ch^{-1}\gamma^{-2}$ which proves (ii).
\end{proof}

\begin{remark}\label{rem-4-3}
In versions 1 and 2 (v1 and v2) we lost a logarithmic factor in (\ref{4-7}) and (\ref{4-8}) (it was $h^{-\frac{1}{4}}$ and $h^{-\frac{1}{3}}\kappa ^{\frac{8}{3}}$).
\end{remark}

\bibliographystyle{alpha}



\end{document}